\newtheorem{dfn}{Definition}[section]
\newtheorem{thm}[dfn]{Theorem}
\newtheorem{pro}[dfn]{Proposition}
\newtheorem{lem}[dfn]{Lemma}
\newtheorem{cro}[dfn]{Corollary}
\theoremstyle{definition}
\newcommand{\ff}{F(X_0,X_1,Y_0,Y_1)}
\newcommand{\fa}{F_{Q_1}}
\newcommand{\fb}{F_{Q_2}}
\newcommand{\fc}{F_{Q_3}}
\newcommand{\fd}{F_{Q_4}}
\newcommand{\co}{\colon}
\newcommand{\ra}{\rightarrow}
\newcommand{\pp}{\mathbb P^1\times \mathbb P^1}
\newcommand{\s}{s_{(1,2)}}
\newcommand{\ta}{the\ above\ }
\newcommand{\mI}{\mathbb I}
\title[Large orders of automorphisms]{Large orders of automorphisms of smooth curves in $\mathbb P^1\times \mathbb P^1$}
\author{Taro Hayashi}
\author{Keika Shimahara}
\address{
	(Taro Hayashi)
	Department of Mathematical Sciences,
	Ritsumeikan University,
	1$-$1$-$1 Nojihigashi, Kusatsu, Shiga, 525$-$8577, Japan}
\email{haya4taro@gmail.com}
\address{
	(Keika Shimahara)
Graduate School of Mathematical Sciences,
	Ritsumeikan University,
	1$-$1$-$1 Nojihigashi, Kusatsu, Shiga, 525$-$8577, Japan}
\email{ra0134hp@ed.ritsumei.ac.jp}
\date{\today}
\subjclass{Primary 14H37; Secondary 14H30}
\keywords{Automorphisms; smooth curve}
\begin{document}
\maketitle
\begin{abstract}
For $a,b\geq 3$, we calculate the orders of automorphisms of smooth curves with bidegree $(a,b)$ in the product $\pp$ of the projective line $\mathbb P^1$.
We identify smooth curves in $\pp$ which have automorphisms with the largest orders.
In addition, we study the relationship between symmetry and geometric structure of curves.
We provide a sufficient condition for the quotient space by an automorphism to be $\mathbb P^1$.
\end{abstract}
\section{Introduction}
In this paper, we work over ${\mathbb C}$. 
For a variety $X$, let ${\rm Aut}(X)$ be the automorphism group of $X$.
For an automorphism $g\in{\rm Aut}(X)$, 
we write the order of $g$ as ${\rm ord}(g)$ and the fixed points set of $g$ as ${\rm Fix}(g)$.
Let $\langle g\rangle \subset {\rm Aut}(X)$ be the cyclic group which is generated by $g$, and $X/\langle g\rangle $ be
the quotient space of $X$ by $\langle g\rangle $.

Automorphisms of smooth plane curves of degree $d\geq 4$ are given by automorphisms of $\mathbb P^2$ ([\ref{bio:acgh}, Appendix A, 17 and 18)]).
When the degree $d$ is fixed,
the orders of automorphims of smooth plane curves of degree $d$ are calculated ([\ref{bio:bb2016},\ {\rm Theorem}\ 6]).
The structures of the automorphism groups are studied based on the orders of automorphisms ([\ref{bio:bb2016},\ref{bio:th21}]).
For degrees $4$, $5$, and $6$, complete classifications of the automorphism groups of smooth plane quartic, quintic, and sextic curves are provided in [\ref{bio:hen}, \ref{bio:kuko}], [\ref{bio:bb}], and [\ref{bio:bb25}], respectively.
Further work extends this to $d \geq 4$, presenting a list of potential the automorphism group structures, some of which are described using exact group sequences in [\ref{bio:haru}].
However, the realization of these structures as the automorphism groups for specific degrees, particularly for fixed $d$, is not fully clarified in certain cases.

Let $C_{a,b}\subset \pp$ be a smooth curve of bidegree $(a,b)$.
When $a,b\geq 3$, automorphisms of $C_{a,b}$ are given by automorphisms of $\pp$
([\ref{bio:tt12}, {\rm Theorem}\,3]).
For the case where the gonality of smooth curves in $\pp$ given by Galois extensions,
the Galois groups are determined ([\ref{bio:tt12},\ {\rm Theorem}\,2]).
In this paper,
we calculate the orders of automorphisms of smooth curves in $\pp$. 
We provide a sufficient condition for the quotient space by an automorphism to be $\mathbb P^1$.
The following Theorem \ref{main} is our first main result.
\begin{thm}\label{main}
Let $C_{a,b}\subset \pp$ be a smooth curve of bidegree $(a,b)$, and let $f$ be an automorphism of $C_{a,b}$ where $a,b\geq 3$.
Then we have the following:
\begin{enumerate}
\item[$(i)$]${\rm ord}(f)$ divides either $6$, $k-2$, $2(k-1)$, $(a-1)(b-1)+1$, $a(b-1)$, $(a-1)b$, 
or $ab$ where $k\in\{a,b\}$.
\item[$(ii)$]If ${\rm ord}(f)$ is either $(a-1)(b-1)+1$, $a(b-1)$, $(a-1)b$, 
or $ab$,
then $C_{a,b}/\langle f\rangle $ is $\mathbb P^1$.
\item[$(iii)$]If $|{\rm Fix}(f)|>0$ and ${\rm ord}(f)$ divides $lm$ where $l\geq2$ and $m:={\rm Max}\{a,b\}$,
then $C_{a,b}/\langle f\rangle $ is $\mathbb P^1$.
\end{enumerate}
\end{thm}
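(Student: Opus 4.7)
The plan is to put $f$ into a normal form on the ambient $\pp$, exploit the resulting character symmetry on the Newton support of the defining polynomial $F$, and finish with Riemann--Hurwitz. By [\ref{bio:tt12}, Theorem 3], $f$ extends to ${\rm Aut}(\pp)=({\rm PGL}_2\times{\rm PGL}_2)\rtimes\mathbb{Z}/2$. Since any finite-order element of ${\rm PGL}_2(\mathbb C)$ is conjugate to $x\mapsto\zeta x$, I can independently diagonalise each factor and reduce to one of two normal forms: the diagonal case $f(x,y)=(\alpha x,\beta y)$, or --- when $a=b$ --- the swap-type case $f(x,y)=(\alpha y,\beta x)$ whose square is the diagonal automorphism with both scalings equal to $\alpha\beta$. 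Writing $F=\sum c_{ij}X_0^{a-i}X_1^iY_0^{b-j}Y_1^j$ with Newton support $S=\{(i,j):c_{ij}\neq 0\}$, in the diagonal case $F\circ f=\lambda F$ forces $S\subset\chi^{-1}(\lambda)$ for the character $\chi(i,j)=\alpha^i\beta^j$; put $p={\rm ord}(\alpha)$, $q={\rm ord}(\beta)$, $n={\rm ord}(f)={\rm lcm}(p,q)$.

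For (i), I would run a case analysis on which of the four corners $(0,0),(a,0),(0,b),(a,b)$ lie in $S$. Smoothness of $C_{a,b}$ forces each boundary row/column of the Newton box to meet $S$ (else one of $X_0,X_1,Y_0,Y_1$ divides $F$) and each boundary-restricted polynomial of $F$ to have only simple roots (else $C_{a,b}$ has a singular point on that ruling). Since $\chi^{-1}(\lambda)$ meets each row/column in an arithmetic progression, these conditions tightly constrain the feasible $(p,q)$. A short split yields: four corners in $S\Rightarrow p\mid a,\;q\mid b\Rightarrow n\mid ab$; three corners $\Rightarrow$ an ``off by one'' shift on one edge forces $n\mid a(b-1)$ or $n\mid(a-1)b$; a ``triangle'' of non-corner extremal monomials such as $\{(a,0),(a-1,b),(0,1)\}$ (or a rotation) produces $n\mid(a-1)(b-1)+1$; and residual configurations with at most two corners yield $n\mid k-2$. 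The swap case, combined with the involutive constraint, adds at most $n\mid 2(k-1)$ (from $f^2$ of order $k-1$) and the sporadic $n\mid 6$.

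For (ii), I would specialise to each of the four largest cases, where $S$ is essentially determined. The fixed-point loci of $f^d$ ($d\mid n$) in $\pp$ are unions of the four rulings $\{0,\infty\}\times\mathbb{P}^1\cup\mathbb{P}^1\times\{0,\infty\}$ and the four corners, and their intersection counts with $C_{a,b}$ are read off from the edge restrictions of $F$. Substituting into Riemann--Hurwitz
\[
2(a-1)(b-1)-2=n(2g'-2)+\sum_{P\in C_{a,b}}(e_P-1),
\]
the ramification sum comes out to $2(a-1)(b-1)-2+2n$, forcing $g'=0$. As a representative calculation, for $n=ab$ with $\gcd(a,b)=1$: $S=\{(0,0),(a,0),(0,b),(a,b)\}$, none of these lie on $C_{a,b}$ so $f$ acts freely; $f^a$ fixes $C_{a,b}\cap(\mathbb{P}^1\times\{0,\infty\})$ in $2a$ points each with stabiliser of order $b$, and $f^b$ contributes $2b$ points of stabiliser $a$; Riemann--Hurwitz then reads $ab(2g'-2)=-2ab$, so $g'=0$.

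For (iii), the hypothesis $|{\rm Fix}(f)|>0$ provides $P\in C_{a,b}$ with $f(P)=P$, contributing ramification index $n$ there. The divisibility hypothesis $n\mid lm$ with $l\geq 2$ and $m={\rm Max}\{a,b\}$ bounds the fixed loci of intermediate powers $f^d$ to the boundary rulings of $\pp$, each meeting $C_{a,b}$ in at most $m$ points; plugging the forced ramification into Riemann--Hurwitz again collapses the quotient genus to $0$. The principal obstacle will be the case analysis in (i), particularly isolating the ``triangle'' configurations producing the exotic order $(a-1)(b-1)+1$ and verifying that the swap case cannot exceed the orders $\{6,k-2,2(k-1)\}$; the Riemann--Hurwitz bookkeeping in (ii) and (iii) is routine once the supports are pinned down.
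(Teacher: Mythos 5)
Your overall architecture coincides with the paper's: diagonalise $f$ using the semidirect-product structure of ${\rm Aut}(\pp)$ (the paper's Lemmas \ref{-1} and \ref{8}), then run a case analysis governed by which of the four corner monomials appear in the Newton support of $F$ --- equivalently, by $|\{Q_i\}_{i=1}^4\cap C_{a,b}|$, since by Lemma \ref{1} the corner $Q_i$ lies on the curve exactly when the corresponding corner coefficient vanishes. The one genuinely different choice you make is in (ii) and (iii): the paper handles the orders $ab$, $(a-1)b$, $a(b-1)$ and all of (iii) by observing that a suitable power of $f$ equals $[D(e_a,1)]\times[I_2]$, so a projection $p_i|_{C_{a,b}}$ is a cyclic Galois cover of degree equal to its mapping degree and the quotient is automatically $\mathbb P^1$ (Lemma \ref{th}); Riemann--Hurwitz is reserved for the order $(a-1)(b-1)+1$, where the four fixed corners alone give $\sum(|G_x|-1)\geq 4(ab-a-b+2)>2g-2$. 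Your uniform Riemann--Hurwitz treatment is viable (your $n=ab$ computation checks out), but the Galois-cover shortcut is cleaner and is what actually closes (iii) in the paper.

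There are, however, concrete errors in your case-to-order assignments that would derail the analysis as written. First, the ``three corners in $S$'' configuration (exactly one $Q_i$ on the curve) does not produce $n\mid a(b-1)$ or $n\mid(a-1)b$ --- it is impossible: comparing coefficients of the three corner monomials forces $t=t^2$, hence $t=1$, and smoothness at the remaining corner then forces $e_n=1$ or $e_m=1$ (Lemma \ref{3}). The orders $(a-1)b$ and $a(b-1)$ in fact arise from the two-corner configuration $\{(a,b),(0,0)\}\subset S$ (i.e.\ $Q_2,Q_3\in C_{a,b}$, Proposition \ref{10}). Second, the order $(a-1)(b-1)+1$ does not come from a ``triangle'' containing the corner $(a,0)$; it occurs only when \emph{no} corner monomial appears (all four $Q_i$ on the curve), realised by the four-cycle of near-corner monomials $\{(a-1,b),(a,1),(0,b-1),(1,0)\}$ or its mirror (Proposition \ref{16-}). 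Finally, your sketch of (iii) asserts that the fixed loci of intermediate powers meet $C_{a,b}$ in ``at most $m$ points'' --- an upper bound --- whereas forcing $g'=0$ from Riemann--Hurwitz requires a \emph{lower} bound on the ramification sum, namely $\sum_P(e_P-1)>2(a-1)(b-1)-2$; a single fixed point of $f$ contributing ${\rm ord}(f)-1\leq 2m-1$ is nowhere near enough, so this step does not close as stated. These are fixable within your framework, but as proposed the case analysis in (i) and the argument for (iii) contain genuine gaps.
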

For $a,b\geq 4$,
we identify smooth curves of bidegree $(a,b)$ in $\pp$ with automorphisms whose orders are certain large values.
Our second main result is the following Theorem \ref{main2}.
Here, let $\mathbb C^{\ast}:=\mathbb C\backslash\{0\}$.
\begin{thm}\label{main2}
Let $C_{a,b}\subset \pp$ be a smooth curve of bidegree $(a,b)$ where $a,b\geq 4$, and let $f$ be an automorphism of $C_{a,b}$.
\begin{enumerate}
\item[$(i)$]If ${\rm ord}(f)=ab$, then 
$C_{a,b}$ is isomorphic to the smooth plane curve defined by $X_0^aY_0^b+X_0^aY_1^b+X_1^aY_0^b+sX_1^aY_1^b=0$
for some $s \in \mathbb{C}^*$.
This family of smooth curves is parametrized by $s \in \mathbb{C}^*$.
\item[$(ii)$]If ${\rm ord}(f)=(a-1)b$, then 
$C_{a,b}$ is isomorphic to the smooth plane curve defined by $X_0^aY_0^b+X_0^{a-1}X_1Y_1^b+X_0X_1^{a-1}Y_0^b+sX_1^aY_1^b=0$
for some $s \in \mathbb{C}^*$.
This family of smooth curves is parametrized by $s \in \mathbb{C}^*$.
\item[$(iii)$]If ${\rm ord}(f)=a(b-1)$, then 
$C_{a,b}$ is isomorphic to the smooth plane curve defined by $X_0^aY_0^b+X_1^aY_0^{b-1}Y_1+X_0^aY_0Y_1^{b-1}+sX_1^aY_1^b=0$
for some $s \in \mathbb{C}^*$.
This family of smooth curves is parametrized by $s \in \mathbb{C}^*$.
\item[$(iv)$]If ${\rm ord}(f)=(a-1)(b-1)+1$,
then $C_{a,b}$ is isomorphic to the smooth plane curve defined by $X_0^{a-1}X_1Y_0^b+X_0^aY_0Y_1^{b-1}+sX_1^aY_0^{b-1}Y_1+
s'X_0X_1^{a-1}Y_1^b=0$, or
$X_0^aY_0^{b-1}Y_1+X_0X_1^{a-1}Y_0^b+sX_0^{a-1}X_1Y_1^b+s'X_1^aY_0Y_1^{b-1}=0$
for some $(s,s') \in (\mathbb{C}^*)^2$.
These families of smooth curves are parametrized by $(s,s') \in (\mathbb{C}^*)^2$.
\end{enumerate}
\end{thm}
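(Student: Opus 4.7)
The strategy is to pass through the ambient surface: by the result of Tsuboi cited above, every automorphism $f$ of $C_{a,b}$ (with $a,b\geq 3$) extends to an automorphism $\tilde f$ of $\pp$ of the same order. When $a\neq b$, $\tilde f\in PGL_2\times PGL_2$; when $a=b$ one must also allow $\tilde f=(A,B)\circ\sigma$ with $\sigma$ the factor-swap, and I would handle this in parallel by passing to $\tilde f^2=(AB,BA)\in PGL_2\times PGL_2$. Since $\tilde f$ has finite order, both $A,B\in PGL_2$ are diagonalizable, so I choose coordinates so that
\[
\tilde f\colon (X_0,X_1,Y_0,Y_1)\longmapsto(X_0,\alpha X_1,Y_0,\beta Y_1),
\]
with $\alpha,\beta$ primitive roots of unity of orders $m,n$, giving $\mathrm{ord}(\tilde f)=\mathrm{lcm}(m,n)$.

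Writing $F=\sum c_{i,j}X_0^{a-i}X_1^iY_0^{b-j}Y_1^j$, the condition $\tilde f^*F=\lambda F$ forces the support $S(F)=\{(i,j):c_{i,j}\neq 0\}$ to lie in a single coset of the index-$\mathrm{ord}(\tilde f)$ lattice
\[
\Lambda:=\{(u,v)\in\mathbb Z^2:\alpha^u\beta^v=1\}\subseteq\mathbb Z^2.
\]
Smoothness of $C_{a,b}$ then forces $S(F)$ to meet each side $\{i=0\}$, $\{i=a\}$, $\{j=0\}$, $\{j=b\}$ of the box $[0,a]\times[0,b]$, since otherwise one of $X_0,X_1,Y_0,Y_1$ divides $F$ and the curve becomes reducible with its components intersecting.

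The heart of the proof is the case-by-case enumeration of admissible $(\Lambda,S(F))$. Because the target orders are close to $(a+1)(b+1)$, each coset of $\Lambda$ meets the box in only a handful of points, and the four-sides condition is highly restrictive. For $\mathrm{ord}(\tilde f)=ab$ I would show that $\Lambda=a\mathbb Z\times b\mathbb Z$ (in particular $\gcd(a,b)=1$) and $S(F)=\{0,a\}\times\{0,b\}$. For $\mathrm{ord}(\tilde f)=(a-1)b$, the case $\gcd(m,n)=1$ admits no configuration satisfying the four-sides condition, so $\gcd(m,n)>1$ is forced; one then deduces that $\Lambda$ is generated by $(a-1,0)$ and $(1,b)$, with $S(F)=\{(0,0),(a-1,0),(1,b),(a,b)\}$ and $\mathrm{ord}(\alpha)=a-1$, $\mathrm{ord}(\beta)=(a-1)b$, $\beta^b=\alpha^{-1}$. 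The case $\mathrm{ord}(\tilde f)=a(b-1)$ is symmetric. For $\mathrm{ord}(\tilde f)=N:=(a-1)(b-1)+1$, the coprimalities $\gcd(N,a-1)=\gcd(N,b-1)=1$ imply that $\alpha,\beta$ each have order $N$ and are related by either $\alpha=\beta^{b-1}$ or $\alpha=\beta^{-(b-1)}$, yielding exactly two lattices $\Lambda$ and the two supports $\{(1,0),(0,b-1),(a,1),(a-1,b)\}$ and $\{(0,1),(a-1,0),(1,b),(a,b-1)\}$ listed in the statement. In every case the four nonzero coefficients of $F$ are finally normalized using the residual diagonal rescalings $X_1\mapsto\mu X_1$, $Y_1\mapsto\nu Y_1$ together with the overall scaling of $F$, yielding the claimed normal form with parameter $s\in\mathbb C^*$ in (i)--(iii) and $(s,s')\in(\mathbb C^*)^2$ in (iv).

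The main obstacle is precisely this combinatorial enumeration: one must rigorously rule out configurations where $\gcd(m,n)>1$ but $\Lambda$ is not axis-aligned, confirm that the listed lattices are the only ones whose cosets meet all four sides of the box, and handle the factor-swap case $a=b$ by verifying that the diagonal treatment of $\tilde f^2$ produces only curves already in the listed families. Smoothness of a generic member of each family is then a routine Jacobian computation.
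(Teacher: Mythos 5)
Your overall architecture matches the paper's: extend $f$ to $\pp$ via Takahashi's theorem, diagonalize both factors (the paper's Lemma \ref{-1}), dispose of the factor-swap case by passing to $f^2$ (the paper does this in Lemma \ref{8} and Lemma \ref{lpmain2}, concluding that a swap-type automorphism has order at most $2a<(a-1)^2+1$, so that case simply cannot produce the four target orders), read off the constraint $\tilde f^*F=\lambda F$ as a condition on the exponent support of $F$, and finish by rescaling $X_1$ and $Y_1$ to normalize the surviving coefficients. Your claimed outcomes of the enumeration (the lattices $\Lambda$ and the supports in each of the four cases) agree with the paper's Propositions \ref{4-}, \ref{10}, and \ref{16-}.

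However, there is a genuine gap in the constraint you propose to drive the enumeration. You derive from irreducibility only that the support meets each of the four sides of the box $[0,a]\times[0,b]$. This is strictly weaker than what is needed and than what the paper actually uses, namely the corner conditions of Lemma \ref{1}: smoothness of $C_{a,b}$ at each torus-fixed point $Q_i$ forces, for each corner of the exponent box, at least one of the three monomials at lattice distance at most $1$ from that corner to appear in $F$ (this is the statement $F_{Q_i}\neq 0$). With only the four-sides condition the enumeration does not close. For instance, in case (i) take $\Lambda=\langle (a,0),(c,b)\rangle$ with $0<c<a$, which has index $ab$, and the coset $\Lambda$ itself, meeting the box in $\{(0,0),(a,0),(c,b)\}$: this support lies in a single coset of an index-$ab$ lattice, touches all four sides, and is compatible with an automorphism of order $ab$, yet it is not the configuration $\{0,a\}\times\{0,b\}$ of the theorem. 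It is excluded only because the corresponding curve is singular at $Q_4$ (no monomial among $(a,b),(a-1,b),(a,b-1)$ appears), i.e., precisely by the corner condition you have not extracted. Similar spurious configurations arise in cases (ii)--(iv), and your claim that ``$\gcd(m,n)=1$ admits no configuration satisfying the four-sides condition'' in case (ii) is likewise not justified by the four-sides condition alone. To repair the argument you must replace the irreducibility-based side condition by the smoothness-based corner conditions at $Q_1,\dots,Q_4$ (equivalently, track which of the $Q_i$ lie on the curve, which is exactly how the paper organizes its case analysis by $|\{Q_i\}_{i=1}^4\cap C_{a,b}|$), and then actually carry out the enumeration, which is the bulk of the paper's Section 3 and is not present in your sketch.
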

Let $C_{a,b}\subset \pp$ be a smooth curve of bidegree $(a,b)$.
By the adjunction formula,
the genus of $C_{a,b}$ is $(a-1)(b-1)$ ([\ref{bio:harts}, Chapter V, Example 1.5.2]).
Let $C_d\subset \mathbb P^2$ be a smooth curve of degree $d$.
By the adjunction formula,
the genus of $C_d$ is $\frac{1}{2}(d-1)(d-2)$ ([\ref{bio:harts}, Chapter V, Example 1.5.1]).
There are many positive integers $a,b\geq 3$ such that $(a-1)(b-1)\not=\frac{1}{2}(d-1)(d-2)$ for any $d\in\mathbb N$, e.g. $a=2^k+1$ and $b=2^l+1$.
Therefore, our main results provide a new insight that differs from Theorems $1$ and $6$ in [\ref{bio:bb2016}].
Section 2 is preliminary. 
We explain Theorems $1$ and $6$ in [\ref{bio:bb2016}].
We present sufficient conditions for the quotient space of a smooth plane curve by its automorphism to be $\mathbb P^1$.
We prepare symbols and other things to prove the main results.
Let $C_{a,b}\subset \pp$ be a smooth curve of bidegree $(a,b)$ where $a,b\geq 3$, and let $f$ be an automorphism of $C_{a,b}$.
In Scetion 3, we calculate the order of $f$ based on the number of special points on $C_{a,b}$.
Afterward,
we show Theorems \ref{main} and \ref{main2}.
\section{Preliminary}
We represent the coordinate system of $\mathbb P^2$ with $[X:Y:Z])$. 
\begin{thm}\label{bb1}$([\ref{bio:bb2016},\ {\rm Theorem}\ 1\ {\rm and}\ {\rm Theorem}\ 6])$.
Let $C\subset \mathbb P^2$ be a smooth plane curve of degree $d\geq 4$, and let $f$ be an automorphism of $C$.
Then ${\rm ord}(f)$ divides either $(d-1)d$, $(d-1)^2$, $(d-2)d$, or $(d-2)(d-1)+1$.
In addition, we have the following:
\begin{enumerate}
\item[$(i)$]If ${\rm ord}(f)=(d-1)d$, then 
$C$ is isomorphic to the smooth plane curve defined by $X^d+Y^d+XZ^{d-1}=0$.
\item[$(ii)$]If ${\rm ord}(f)=(d-1)^2$, then 
$C$ is isomorphic to the smooth plane curve defined by $X^d+Y^{d-1}Z+XZ^{d-1}=0$.
\item[$(iii)$]If ${\rm ord}(f)=(d-2)d$, then 
$C$ is isomorphic to the smooth plane curve defined by $X^d+Y^{d-1}Z+YZ^{d-1}=0$.
\item[$(iv)$]If ${\rm ord}(f)=(d-2)(d-1)+1$, then 
$C$ is isomorphic to the smooth plane curve defined by $X^{d-1}Y+Y^{d-1}Z+XZ^{d-1}=0$.
\end{enumerate}
\end{thm}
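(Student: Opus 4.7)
The plan is to reduce everything to a combinatorial analysis of the monomial support of the semi-invariant defining $C$ under a diagonalised linear action on $\mathbb{P}^2$.

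First I would invoke the classical result that for $d \geq 4$, every automorphism of a smooth plane curve of degree $d$ extends to $\mathrm{PGL}_3(\mathbb{C})$; hence $f$ lifts to a finite-order element of $\mathrm{GL}_3(\mathbb{C})$, which I would diagonalise to $f = \mathrm{diag}(1, \zeta^a, \zeta^b)$, where $\zeta$ is a primitive $n$-th root of unity, $n = \mathrm{ord}(f)$, and $\gcd(a,b,n) = 1$. Because the defining polynomial $F(X,Y,Z)$ of $C$ is a semi-invariant of $f$, each monomial $X^{i}Y^{j}Z^{k}$ (with $i+j+k = d$) appearing in $F$ must satisfy $ja + kb \equiv c \pmod{n}$ for one common constant $c$; so $\mathrm{Supp}(F)$ lies on an arithmetic line inside the simplex of lattice points of degree $d$.

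Second, I would impose smoothness at each vertex $V_m \in \{[1{:}0{:}0],\,[0{:}1{:}0],\,[0{:}0{:}1]\}$ and along each coordinate edge. Vertex smoothness forces $F$ to contain either the corresponding pure power ($X^d$, $Y^d$, or $Z^d$) if the vertex lies off $C$, or a "shifted vertex" monomial of the form $X^{d-1}Y$, $X^{d-1}Z$, $Y^{d-1}X$, $Y^{d-1}Z$, $Z^{d-1}X$, or $Z^{d-1}Y$ if the vertex lies on $C$, since then the tangent direction must be encoded by the lowest-order terms. Combined with the arithmetic-line constraint on $\mathrm{Supp}(F)$, this narrows the support to a finite list of triangular configurations consisting of three such monomials, one per vertex.

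Third, for each admissible triangular configuration I would compute the largest $n$ for which a diagonal action of the form above exists: this $n$ equals the absolute value of a $2\times 2$ integer determinant built from differences of the exponent triples. A direct enumeration then produces precisely the four maxima $d(d-1)$, $(d-1)^2$, $d(d-2)$, and $(d-2)(d-1)+1$, realised by the four explicit normal forms stated in $(i)$--$(iv)$. In each maximal case the arithmetic-line condition forces $\mathrm{Supp}(F)$ to consist of exactly those three monomials, and a diagonal rescaling of $X, Y, Z$ normalises all three coefficients to $1$; smoothness of each resulting equation is then verified by showing that the gradient of $F$ has no common zero.

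The main obstacle is the exhaustive casework in the third step: one must show not merely that the four listed values are the maxima, but that \emph{every} admissible triangular skeleton produces an order dividing one of those four integers, which is what yields the divisibility statement rather than a mere upper bound. This requires a systematic walk through the possible vertex configurations (on-curve versus off-curve for each $V_m$), using the $\mathfrak{S}_3$ symmetry permuting coordinates to eliminate redundant cases, and treating carefully the degenerate skeletons in which two of the three "vertex monomials" share a vertex and smoothness along an edge forces an auxiliary fourth monomial whose presence further constrains the admissible exponent triples.
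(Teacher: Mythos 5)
This theorem is quoted from [\ref{bio:bb2016}, Theorems 1 and 6] and the paper gives no proof of it, so there is no internal argument to compare against; your sketch is essentially the standard proof of the cited result. It is also the same method this paper itself deploys for the $\mathbb P^1\times\mathbb P^1$ analogues in Sections 2--3 --- extend $f$ to the ambient space, diagonalize a finite-order lift (cf.\ Lemma \ref{-1}), use smoothness at the torus-fixed points to force vertex or shifted-vertex monomials into the defining polynomial (cf.\ Lemma \ref{1}), compare coefficients of the semi-invariant to obtain root-of-unity congruences whose exhaustive case analysis gives the divisibility list, and in the extremal cases pin the support down to a short normal form whose coefficients are scaled to $1$ by a diagonal coordinate change (cf.\ Propositions \ref{8.1}, \ref{10}, \ref{16-} and Theorem \ref{pmain2}) --- so the plan, including your observation that degenerate skeletons need an auxiliary monomial forced by irreducibility, is sound.
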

Additionally, Badr and Bars in $[\ref{bio:bb2016},\ {\rm Theorem}\ 1]$
 determine the automorphism groups of the smooth plane curves defined by the equations 
$X^d+Y^d+XZ^{d-1}=0$, $X^d+Y^{d-1}Z+XZ^{d-1}=0$, $X^d+Y^{d-1}Z+YZ^{d-1}=0$, and $X^{d-1}Y+Y^{d-1}Z+XZ^{d-1}=0$.
\begin{thm}
$([\ref{bio:bb2016},\ {\rm Corollary}\ 24\ {\rm and}\ {\rm Corollary}\ 32],[\ref{bio:th23g},\ {\rm Theorem}\ 4.1])$.
Let $C\subset \mathbb P^2$ be a smooth plane curve of degree $d\geq 4$, and let $f$ be an automorphism of $C$.
If ${\rm ord}(f)$ is either $k(d-1)$, $kd$, or $(d-2)(d-1)+1$ where $k\geq 2$, then $C/\langle f\rangle \cong \mathbb P^1$.
\end{thm}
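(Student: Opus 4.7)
The approach is to apply the Riemann--Hurwitz formula to the quotient map $\pi\colon C\to C/\langle f\rangle $. Writing $g=\frac{1}{2}(d-1)(d-2)$ for the genus of $C$, $g'$ for the genus of the quotient, and $n={\rm ord}(f)$, one has
\begin{equation*}
2g-2=n(2g'-2)+\sum_{P\in C}(e_P-1),
\end{equation*}
with every ramification index $e_P$ dividing $n$. Since $d\geq 4$, the automorphism $f$ extends to some $F\in{\rm PGL}_3(\mathbb C)$ by $[\ref{bio:acgh},\ {\rm Appendix\ A}]$, and after a projective change of coordinates we may take $F([X:Y:Z])=[X:\alpha Y:\beta Z]$ for roots of unity $\alpha,\beta$ whose joint order in ${\rm PGL}_3(\mathbb C)$ equals $n$. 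The goal is to exclude $g'\geq 1$ in each of the three cases.

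For $g'\geq 2$, Riemann--Hurwitz gives $n(2g'-2)\leq 2g-2$, hence $n\leq g-1=\frac{d^2-3d}{2}$; a direct numerical check rules this out for small $d$, and for large $d$ one uses the fixed-point analysis below to strengthen the estimate. To exclude $g'=1$, one must show that $\sum(e_P-1)$ cannot equal the precise value $2g-2$. The fixed points of each power $f^{j}$ lie in $C\cap{\rm Fix}(F^{j})$, and since ${\rm Fix}(F^{j})\subset\mathbb P^{2}$ is either a union of the three coordinate points, a coordinate line together with an isolated coordinate point, or all of $\mathbb P^{2}$, the candidate ramification points are restricted to $[1:0:0],[0:1:0],[0:0:1]$ and the finite intersections of $C$ with the three coordinate lines.

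For the extremal case $n=(d-2)(d-1)+1$, Theorem \ref{bb1}(iv) makes $F$ explicit on the model $X^{d-1}Y+Y^{d-1}Z+XZ^{d-1}=0$, and a direct computation shows that $f$ fixes exactly the three coordinate points with full ramification index $n$, so $\sum(e_P-1)=3(n-1)$ and Riemann--Hurwitz forces $g'=0$. For $n=kd$ or $n=k(d-1)$ the curve is not determined by $n$ alone, but one enumerates the eigenvalue triples $(1,\alpha,\beta)$ of order exactly $n$ in ${\rm PGL}_3(\mathbb C)$, writes down the $F$-invariant degree-$d$ monomials in each case, and computes the ramification from the coordinate points together with the coordinate-line intersections. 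The main obstacle is the case $n=k(d-1)$ with $k=2$ and large $d$: here $n$ is close to $g-1$, so size comparison alone is insufficient, and one must exploit the smoothness of $C$ together with the $F$-invariance of its equation to guarantee that enough coordinate points actually lie on $C$, producing ramification that is strictly incompatible with $g'=1$. A parallel but milder bookkeeping handles $n=kd$ with $k=2$.
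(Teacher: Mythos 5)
This statement is quoted in the paper from [\ref{bio:bb2016}] and [\ref{bio:th23g}] without an internal proof, so there is nothing to match your argument against line by line; the closest internal analogues are Lemma \ref{th} and the Riemann--Hurwitz computation in the proof of Theorem \ref{pmain2}$(iv)$. Your treatment of the case $n=(d-2)(d-1)+1$ is essentially complete and is the standard argument: on the model $X^{d-1}Y+Y^{d-1}Z+XZ^{d-1}=0$ the three coordinate points lie on $C$ and are fixed by the diagonalized $F$, so each contributes $n-1=2g$ to the ramification term, and $2g-2=n(2g'-2)+6g$ forces $g'=0$. This is exactly the mechanism the paper itself uses for Theorem \ref{pmain2}$(iv)$.

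For $n=kd$ and $n=k(d-1)$, however, your proposal does not close the argument: you defer the exclusion of $g'\geq 2$ for large $d$ to a ``fixed-point analysis below'' that is only sketched for $g'=1$, and you explicitly name the case $k=2$ with $d$ large as an unresolved obstacle. That is a genuine gap, and the direct Riemann--Hurwitz bookkeeping you propose is the hard way around it. The standard (and the cited) proof instead shows that, after diagonalization and using the smoothness and $F$-invariance of the defining equation, the power $f^{k}$ is a homology of order $d$ (resp. $d-1$) whose center is a point off $C$ (resp. an inner Galois point on $C$); projection from that center then exhibits $C$ as a cyclic Galois cover of $\mathbb P^1$ of degree $d$ (resp. $d-1$) with Galois group $\langle f^{k}\rangle$, so $C/\langle f^{k}\rangle\cong\mathbb P^1$, and $C/\langle f\rangle$, being a further quotient of $\mathbb P^1$ by the finite group $\langle f\rangle/\langle f^{k}\rangle$, is again $\mathbb P^1$. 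This is precisely the $\mathbb P^1\times\mathbb P^1$ mechanism of Lemma \ref{th}. Replacing your case-by-case ramification count for $kd$ and $k(d-1)$ by this covering argument would repair the proof; as written, the two cases that carry most of the content of the theorem are left open.
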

\begin{thm}$([\ref{bio:th21l},\ {\rm Theorem}\ 1.7])$.
Let $C\subset \mathbb P^2$ be a smooth plane curve of degree $d\geq 4$, and let $f$ be an automorphism of $C$.
\begin{enumerate}
\item[$(i)$]We assume that ${\rm ord}(f)=d-1$. Then $|{\rm Fix}(f)|\not= 2$ if and only if $C/\langle f\rangle \cong \mathbb P^1$.
\item[$(i)$]We assume that ${\rm ord}(f)=d$. Then $|{\rm Fix}(f)|\not=0$ if and only if $C/\langle f\rangle \cong \mathbb P^1$.
\end{enumerate}
\end{thm}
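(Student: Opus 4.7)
The strategy is to pass from $f \in {\rm Aut}(C)$ to an element of ${\rm PGL}_3(\mathbb{C})$ (possible for $d \geq 4$ by the Appendix of [\ref{bio:acgh}]), diagonalize it, and apply Riemann--Hurwitz to the quotient map $\pi \co C \to C/\langle f\rangle$. After a suitable coordinate change I would write $f = {\rm diag}(1, \zeta^a, \zeta^b)$ where $\zeta$ is a primitive $n$-th root of unity and $n = {\rm ord}(f)$. The fixed locus of $f$ in $\mathbb{P}^2$ consists of three isolated vertices when $a$, $b$, and $a - b$ are all nonzero modulo $n$; otherwise it is a line plus an isolated point, and the fixed line meets $C$ in $d$ distinct points, all fixed by $f$.

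The first reduction is the observation that for $n \in \{d-1, d\}$ the ``line plus point'' configuration forces $|{\rm Fix}(f)| \geq d \geq 4$. In both parts of the theorem this exceeds the excluded value ($2$ for (i), $0$ for (ii)), so the excluded configurations can arise only when $f$ has three isolated fixed vertices, and $|{\rm Fix}(f)|$ is then precisely the number of coordinate vertices lying on $C$. Moreover, when the fixed line is present, Riemann--Hurwitz together with $R \geq d(n-1)$ forces $g(C/\langle f\rangle) = 0$ by a short arithmetic argument. So both equivalences reduce to the three-vertex case.

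In that case I would apply Riemann--Hurwitz explicitly: with $g = (d-1)(d-2)/2$ and $g' := g(C/\langle f\rangle)$,
\begin{equation*}
2g - 2 \;=\; n(2g'-2) \,+\, R, \qquad R \;=\; \sum_{P \in C}(e_P - 1).
\end{equation*}
The $f$-invariance of the defining equation $F$ constrains each of its monomials $X^i Y^j Z^k$ by $aj + bk \equiv c \pmod n$ for a fixed $c$; imposing smoothness then shrinks the list of admissible $(a,b) \pmod n$ to a short catalogue, and for each entry one reads off the stabilizer ${\rm Stab}_P \subset \langle f\rangle$ at each vertex and at each extra fixed point of proper powers $f^{n/m}$, $m \mid n$. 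Substituting into the formula yields $g'$ directly, and in every sub-case one checks that the values $|{\rm Fix}(f)| \in \{0,1,3\}$ (case (i)) or $\{1,2,3\}$ (case (ii)) give $g' = 0$, while the excluded value forces $g' \geq 1$.

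The main obstacle I expect is the bookkeeping of ramification contributed by nontrivial proper powers $f^{n/m}$ when $n$ is composite: such a power can acquire a fixed line, producing $d$ new points fixed by the cyclic subgroup of order $m$ in $\langle f\rangle$ but not by $f$ itself. Controlling these requires exploiting the explicit shape of $F$ after $\langle f\rangle$-invariance is imposed, and showing in each sub-case that the extra ramification is either forbidden by smoothness or calibrated so as to keep $g' = 0$. Once this verification is done uniformly across the admissible $(a,b) \pmod n$, the equivalences (i) and (ii) follow.
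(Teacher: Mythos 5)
This statement is quoted in the paper from [\ref{bio:th21l}, Theorem 1.7] and is not proved there, so there is no in-paper argument to compare against; I can only assess your outline on its own terms. Your framework is the standard (and almost certainly the intended) one: lift $f$ to ${\rm PGL}_3(\mathbb C)$, diagonalize, split into the ``line plus isolated point'' and ``three isolated vertices'' configurations, and run Riemann--Hurwitz against the constraints that $\langle f\rangle$-invariance and smoothness impose on the defining polynomial. Your reduction of the first configuration is essentially sound: for $n=d$ invariance forces $F=G_d(X,Y)+cZ^d$ and for $n=d-1$ it forces $F=G_d(X,Y)+Z^{d-1}L_1(X,Y)$, and your Riemann--Hurwitz estimate with $R\ge d(n-1)$ does give $g'=0$. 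But even here you assert rather than prove that the fixed line meets $C$ in $d$ \emph{distinct} points; this needs the observation that a repeated root of $G_d$ would be a singular point of $C$ (check the partials on $Z=0$), without which $|{\rm Fix}(f)|=2$ could a priori occur in a configuration where $C/\langle f\rangle\cong\mathbb P^1$, breaking the ``only if'' direction of (i).

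The more serious issue is that the three-vertex case --- which is where the theorem actually lives --- is deferred entirely. The ``short catalogue of admissible $(a,b)\bmod n$,'' the exact (not merely lower-bounded) computation of $R$ including the fixed loci of all proper powers $f^{n/m}$, and the verification that the excluded fixed-point count forces $g'\ge 1$ while every other count gives $g'=0$, are precisely the content of the result; your proposal states that ``one checks'' these things and explicitly flags the bookkeeping for composite $n$ as the main obstacle without resolving it. The forward direction ($|{\rm Fix}(f)|$ avoiding the excluded value $\Rightarrow g'=0$) needs only lower bounds on $R$ and is plausibly within reach of your sketch, but the converse needs sharp upper bounds on the ramification of every subgroup of $\langle f\rangle$, and nothing in the proposal controls these. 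As written, this is a correct strategy document rather than a proof.
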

We represent the coordinate system of $\mathbb P^1\times \mathbb P^1$ with $([X_0:X_1],[Y_0:Y_1])$. 
We set 
\begin{equation*}
\begin{split}
Q_1:=([1:0],[1:0]),\ \ \ \ \ \ \ &Q_2:=([1:0],[0:1]),\\
Q_3:=([0:1],[1:0]),\ \ \ \ \ \ \ &Q_4:=([0:1],[0:1]).
\end{split}
\end{equation*}
Let $C_{a,b}\subset\pp$ be a smooth curve of bidegree $(a,b)$, and let $\ff$ be the defining equation of $C_{a,b}$.
Since the bidegree of $C_{a,b}$ is $(a,b)$, there are $s_{i,j}\in\mathbb C$ for $0\leq i\leq a$ and $0\leq j\leq b$ such that
\[F(X_0,X_1,Y_0,Y_1)=\sum_{\substack{0\leq i\leq a\\0\leq j\leq b}}s_{i,j}X_0^iX_1^{a-i}Y_0^jY_1^{b-j}.\]
For the above representation, we define the bihomogeneous polynomials $\fa$, $\fb$, $\fc$, and $\fd$ as follows:
\begin{equation*}
\begin{split}
\fa:=&s_{a,b}X_0^aY_0^b+s_{a-1,b}X_0^{a-1}X_1Y_0^b+s_{a,b-1}X_0^aY_0^{b-1}Y_1,\\
\fb:=&s_{a,0}X_0^aY_1^b+s_{a-1,0}X_0^{a-1}X_1Y_1^b+s_{a,1}X_0^aY_0Y_1^{b-1},\\
\fc:=&s_{0,b}X_1^aY_0^b+s_{1,b}X_0X_1^{a-1}Y_0^b+s_{0,b-1}X_1^aY_0^{b-1}Y_1,\\
\fd:=&s_{0,0}X_1^aY_1^b+s_{1,0}X_0X_1^{a-1}Y_1^b+s_{0,1}X_1^aY_0Y_1^{b-1}.
\end{split}
\end{equation*}
We define sets $\mathbb E$ and $\mI$ of specific pairs of integers $(i,j)$ as follows:
\begin{equation*}
\begin{split}
\mathbb E&:=
\left\{
\begin{aligned}
&(a,b),\,(a-1,b),\,(a,b-1),\,(a,0),\,(a-1,0),\,(a,1),\\
&(0,b),\,(1,b),\,(0,b-1),\,(0,0),\,(1,0),\,(0,1)
\end{aligned}
\right\},\\
\mI&:=\{(i,j)\,|\,0\leq i\leq a,\ 0\leq j\leq b,\ {\rm and}\ (i,j)\not\in \mathbb E\}. 
\end{split}
\end{equation*}
Note that
\begin{equation}\label{01}
\begin{split}
\sum_{i=1}^4F_{Q_i}&=\sum_{(i,j)\in \mathbb E}s_{i,j}X_0^iX_1^{a-i}Y_0^jY_1^{b-j},
\end{split}
\end{equation}
\begin{equation}\label{02}
\begin{split}
F(X_0,X_1,Y_0,Y_1)&=\sum_{i=1}^4F_{Q_i}+\sum_{(i,j)\in\mI}s_{i,j}X_0^iX_1^{a-i}Y_0^jY_1^{b-j}.
\end{split}
\end{equation}
The smoothness of $C_{a,b}$ at $Q_1,\ldots, Q_4$ implies the following Lemma \ref{1} for $\fa$, $\fb$, $\fc$, and $\fd$.
\begin{lem}\label{1}
Let $C_{a,b}\subset\pp$ be a smooth curve of bidegree $(a,b)$ with $a,b\geq 1$.
Let $\ff$ be the defining equation of $C_{a,b}$, and let $\fa$, $\fb$, $\fc$, and $\fd$ be the bihomogeneous polynomials defined as above.
Then $\fa$, $\fb$, $\fc$, and $\fd$ are non-zero polynomials.
Moreover, $Q_1\in C_{a,b}$ $($similarly $Q_2\in C_{a,b},\ Q_3\in C_{a,b},\ Q_4\in C_{a,b})$ if and only if $s_{a,b}=0$ $($similarly $s_{a,0}=0,\ s_{0,b}=0,\ s_{0,0}=0)$.
\end{lem}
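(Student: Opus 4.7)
The plan is to handle both claims by reducing to local analysis in the standard affine chart around each $Q_i$. Both parts are essentially direct computations; the only real content is identifying which three coefficients of $F$ become the $0$-jet, $1$-jet in $x$, and $1$-jet in $y$ of the local equation at $Q_i$.

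First I will do the ``if and only if'' statement. At $Q_1 = ([1:0],[1:0])$ we simply substitute $X_0 = Y_0 = 1$, $X_1 = Y_1 = 0$ into $F$; every monomial $s_{i,j} X_0^i X_1^{a-i} Y_0^j Y_1^{b-j}$ vanishes unless $i = a$ and $j = b$, so $F(1,0,1,0) = s_{a,b}$ and hence $Q_1 \in C_{a,b} \iff s_{a,b} = 0$. The statements for $Q_2, Q_3, Q_4$ are obtained in exactly the same way, by swapping the roles of $Y_0, Y_1$ and of $X_0, X_1$.

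Next, for the non-vanishing of $F_{Q_1}$, I pass to the affine chart $X_0 \neq 0$, $Y_0 \neq 0$ with local coordinates $x = X_1/X_0$ and $y = Y_1/Y_0$. Then the dehomogenization is
\[
f(x,y) \;=\; F(1,x,1,y) \;=\; \sum_{0\leq i\leq a,\,0\leq j\leq b} s_{i,j}\, x^{a-i} y^{b-j},
\]
so $f(0,0) = s_{a,b}$, $(\partial f/\partial x)(0,0) = s_{a-1,b}$, and $(\partial f/\partial y)(0,0) = s_{a,b-1}$. If $Q_1 \notin C_{a,b}$ then $s_{a,b}\neq 0$, and $F_{Q_1}$ is non-zero. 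If $Q_1 \in C_{a,b}$ then $s_{a,b} = 0$, and smoothness of $C_{a,b}$ at $Q_1$ forces $(s_{a-1,b}, s_{a,b-1}) \neq (0,0)$, so $F_{Q_1}$ still has at least one non-zero coefficient. In either case $F_{Q_1} \not\equiv 0$.

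The arguments for $F_{Q_2}, F_{Q_3}, F_{Q_4}$ are identical after the obvious change of chart (replacing $Y_0 \leftrightarrow Y_1$ for $Q_2$, $X_0 \leftrightarrow X_1$ for $Q_3$, and both for $Q_4$); in each case the three coefficients entering $F_{Q_i}$ are precisely the value and the two first partial derivatives of the local equation at the point, and smoothness forbids them from vanishing simultaneously. There is no real obstacle here, only the bookkeeping of indices: the slight care is to check that in the definitions of $F_{Q_2}, F_{Q_3}, F_{Q_4}$ one has picked exactly the three monomials that, under the corresponding dehomogenization, give the constant, $\partial_x$, and $\partial_y$ coefficients at the chosen point.
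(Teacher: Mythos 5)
Your proposal is correct and follows essentially the same route as the paper: dehomogenize in the affine chart at $Q_i$, identify the three coefficients of $F_{Q_i}$ as the value and the two first partials of the local equation at the point, and invoke smoothness to rule out their simultaneous vanishing. The only cosmetic difference is that you split into the cases $Q_1\in C_{a,b}$ and $Q_1\notin C_{a,b}$, whereas the paper argues directly by contradiction from $F_{Q_1}=0$.
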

\begin{proof}
We show that $\fa\not=0$ and that  $Q_1\in C_{a,b}$ if and only if $s_{a,b}=0$.
The other cases follow by a similar argument.
We set 
\[F(X_0,X_1,Y_0,Y_1)=\sum_{\substack{0\leq i\leq a\\0\leq j\leq b}}s_{i,j}X_0^iX_1^{a-i}Y_0^jY_1^{b-j}\]
where $s_{i,j}\in\mathbb C$ for $0\leq i\leq a$ and $0\leq j\leq b$.
Then 
\[\fa=s_{a,b}X_0^aY_0^b+s_{a-1,b}X_0^{a-1}X_1Y_0^b+s_{a,b-1}X_0^aY_0^{b-1}Y_1.\]
We set 
\[x:=\frac{X_1}{X_0},\ \ y:=\frac{Y_1}{Y_0},\ \  {\rm and}\ \ \ f(x,y):=F(1,\frac{X_1}{X_0},1,\frac{Y_1}{Y_0}).\]
Then
\[f(x,y)=s_{a,b}+s_{a-1,b}x+s_{a,b-1}y+\sum_{(i,j)\in\mI}s_{i,j}x^{a-i}y^{b-j}. \]
If $\fa=0$, i.e. $s_{a,b}=s_{a-1,b}=s_{a,b-1}=0$, then the affine curve defined by $f(x,y)=0$ is singular at $(0,0)\in\mathbb C^2$.
This implies that $C_{a,b}$ is singular at $Q_1\in\pp$.
This is a contradiction.
Therefore, $\fa\not=0$.
In addition, by the equation $f(x,y)$, we see that $Q_1\in C_{a,b}$ if and only if $s_{a,b}=0$.
\end{proof}
\begin{thm}$([\ref{bio:tt12},\ {\rm Theorem}\ 3])$.
Let \( X_e \) be the Hirzebruch surface of degree \( e \geq 0 \), and let \( C_0 \) be a section with self-intersection \( C_0^2 = -e \), and \( f \) a fiber of the ruling \( X_e \to \mathbb{P}^1 \).  
Let \( C \sim aC_0 + b f \) be a nonsingular projective curve on \( X_e \), and assume that \( a, b \geq 3 \) and \( (e, a) \ne (1, b), (1, b - 1) \).  
Then for every \( f \in \mathrm{Aut}(C) \), there exists \( \widehat{f} \in \mathrm{Aut}(X_e) \) such that \( \widehat{f}(C) = C \) and \( \widehat{f}_{|C} = f \).
\end{thm}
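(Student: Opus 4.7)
The plan is to exhibit the ruling $X_e \to \mathbb{P}^1$ as intrinsic data on $C$, so that any $\phi \in \mathrm{Aut}(C)$ descends to a $\bar{\phi} \in \mathrm{Aut}(\mathbb{P}^1)$ and then lifts to $\widehat{\phi} \in \mathrm{Aut}(X_e)$ with $\widehat{\phi}|_{C} = \phi$. Since the fiber class $\mathbf{f}$ satisfies $\mathbf{f} \cdot C = a$, restriction cuts out a base-point-free pencil $\mathfrak{g}$ on $C$ of degree $a$. From the short exact sequence
\begin{equation*}
0 \to \mathcal{O}_{X_e}(\mathbf{f}-C) \to \mathcal{O}_{X_e}(\mathbf{f}) \to \mathcal{O}_C(\mathbf{f}|_C) \to 0,
\end{equation*}
together with the $H^{1}$-vanishing of the leftmost sheaf that follows from $a,b\geq 3$, one sees that $\mathfrak{g}$ is the complete linear system $|\mathcal{O}_C(\mathbf{f}|_C)|$, so it depends only on $C$ and the divisor class $[\mathbf{f}|_C] \in \mathrm{Pic}^a(C)$.

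The heart of the argument is to show, under the stated hypotheses, that this $g^{1}_{a}$ is canonically attached to $C$: either $\mathfrak{g}$ is the unique $g^{1}_{a}$ on $C$, or---in the symmetric situation $e=0$, $a=b$---it is one of a canonically distinguished pair of $g^{1}_{a}$'s that $\mathrm{Aut}(C)$ permutes. By adjunction the genus is $g(C)=(a-1)(b-1)-\binom{a}{2}e$, and an analysis combining the Castelnuovo--Severi inequality applied to a hypothetical second pencil of degree $a$ with Martens-type bounds on the variety $W^{1}_{a}(C)$ of special pencils shows that an alternative $g^{1}_{a}$ is numerically excluded except in the cases $(e,a)=(1,b)$ and $(1,b-1)$, which are precisely where the curve picks up an accidental second $g^{1}_{a}$. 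This uniqueness step is the main obstacle, and the combinatorial conditions on $(e,a,b)$ in the hypothesis exist exactly to ensure it.

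Once uniqueness is secured, $\phi$ permutes the divisors of $\mathfrak{g}$ and therefore sends ruling-fibers (cut on $C$) to ruling-fibers, inducing $\bar{\phi}\in\mathrm{Aut}(\mathbb{P}^1)$. For the lift, use that the natural homomorphism $\mathrm{Aut}(X_e)\to\mathrm{Aut}(\mathbb{P}^1)$ is surjective, since $X_e=\mathbb{P}(\mathcal{O}_{\mathbb{P}^1}\oplus\mathcal{O}_{\mathbb{P}^1}(-e))$: pick any preliminary lift $\widehat{\phi}_{0}$ of $\bar{\phi}$, then correct it by a fiberwise automorphism of $X_e$ so that the resulting $\widehat{\phi}$ satisfies $\widehat{\phi}|_{C}=\phi$. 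For $e\geq 1$ the negative section $C_0$ is unique and is automatically preserved by $\widehat{\phi}$, which rigidifies the correction; for $e=0$ the two rulings are permuted coherently with the action of $\phi$ on the two canonical $g^{1}_{a}$'s. A final check on a single generic fiber meeting $C$ transversally in $a$ points confirms $\widehat{\phi}|_{C}=\phi$ identically on $C$, giving the desired extension.
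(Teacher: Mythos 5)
This statement is not proved in the paper at all: it is quoted verbatim from [Takahashi, J. Pure Appl. Algebra 216 (2012), Theorem 3] and used as a black box, so there is no internal proof to compare yours against. Judged on its own, your outline follows the natural strategy (and, in broad shape, Takahashi's): realize the ruling as an intrinsic pencil on $C$, descend $\phi$ to $\mathbb{P}^1$, and lift. However, the two steps you defer are the entire content of the theorem, and the tools you name for the first one do not suffice. You claim the uniqueness of the $g^1_a$ is ``numerically excluded'' by Castelnuovo--Severi together with Martens-type bounds except when $(e,a)=(1,b),(1,b-1)$. But Castelnuovo--Severi only forces $g\leq (a-1)^2$ in the presence of a second independent degree-$a$ pencil, while $g=(a-1)(b-1)-\binom{a}{2}e$ satisfies $g\leq(a-1)^2$ on a nonempty range of admissible parameters: for $e=1$ this reads $b\leq \tfrac{3a}{2}$, so e.g. $e=1$, $a=10$, $b=12$ (allowed by the hypotheses, irreducibility needs only $b\geq ae$) gives $g=54\leq 81$; for $e=2$, $b=2a$ one gets exactly $g=(a-1)^2$. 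So the numerical argument fails precisely in cases the theorem must cover, and a genuinely geometric uniqueness argument is needed.

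The second gap is the lifting. Granting that $\phi$ permutes the fibers of $\pi=p|_C\colon C\to\mathbb{P}^1$, you obtain $\bar\phi\in\mathrm{Aut}(\mathbb{P}^1)$, choose a preliminary lift $\widehat\phi_0$, and propose to ``correct it by a fiberwise automorphism'' so that it restricts to $\phi$ on $C$. The obstruction is that $\psi:=\phi\circ(\widehat\phi_0|_C)^{-1}$ is an automorphism of $C$ over $\mathbb{P}^1$, and a permutation of the $a\geq 3$ points $F_t\cap C$ inside a fiber $F_t\cong\mathbb{P}^1$ is in general induced by no element of $\mathrm{PGL}_2$ (for $a\geq 4$ cross-ratios already obstruct this); the existence of the correcting fiberwise automorphism is therefore exactly what must be proved, not a free move. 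The standard repair is to identify $X_e\cong\mathbb{P}(\pi_*L)$ for a line bundle $L$ on $C$ that is itself intrinsically characterized --- via adjunction $(a-2)\,C_0|_C\sim K_C-(b-e-2)\,\mathbf{f}|_C$, which is where the hypothesis $a\geq 3$ actually enters --- and to show that $\psi$ preserves $L$, so that it acts on the projectivized pushforward. Neither this identification nor the role of $a\geq 3$ appears in your sketch, so as written the argument does not close.
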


Let ${\rm GL}(2,\mathbb C)$ be the general linear group of $\mathbb C^2$,
let $I_2$ be the identity matrix of size $2$, and let $Z(\mathbb C^2):=\{A\in{\rm GL}(2,\mathbb
	C)\,|\,A=aI_2\ {\rm for\ some}\ a\in\mathbb C^{\ast}\}$ be the center of ${\rm GL}(2,\mathbb C)$.
	The projective linear group ${\rm PGL}(2,\mathbb C)$ is the quotient group
	${\rm GL}(2,\mathbb C)/Z(\mathbb C^2)$.
	Let $q\co{\rm GL}(2,\mathbb C)\ra{\rm PGL}(2,\mathbb C)$ be the quotient morphism.
For $A\in{\rm GL}(2,\mathbb C)$, we set $[A]:=q(A)\in{\rm PGL}(2,\mathbb C)$.
For $a,b\in\mathbb C^{\ast}$, we set 
\[D(a,b):=\begin{pmatrix}
a&0\\
0&b
\end{pmatrix}\in{\rm GL}(2,\mathbb C).\]

Let $C_{a,b}\subset \pp$ be a smooth curve of bidegree $(a,b)$, and
let $f$ be an automorphism of $C_{a,b}$.
For $[A]\times[B]\in {\rm PGL}(2,\mathbb C)\times {\rm PGL}(2,\mathbb C)\subset {\rm Aut}(\pp)$ where $A,B\in{\rm GL}(2,\mathbb C)$,
if the restriction of $[A]\times [B]$ to $C_{a,b}$ is equal to $f$,
then we denote this by writing
\[f=[A]\times[B].\]
If either $A$ or $B$ is $I_2$,
we determine the order in Lemma \ref{2}.
For the case where $A\neq I_2$ and $B\neq I_2$,
we calculate the orders of automorphisms based on the number of $|C_{a,b}\cap \{Q_i\}_i^4|$.
Additionally, in Lemma \ref{3}, we show that $|C_{a,b}\cap \{Q_i\}_{i=1}^4|\not=1$.

For a positive integer $n$, let $e_n$ be a primitive $n$-th root of unity.
\begin{lem}\label{-1}
Let $C_{a,b}\subset \pp$ be a smooth curve of bidegree $(a,b)$ with an automorphism $f=[A]\times [B]$ where $a,b\geq 3$ and $A,B\in{\rm GL}(2,\mathbb C)$.
By replacing the coordinate systems of the first and second components of $\pp$ individually if necessary,
we get $f=[D(e_n,1)]\times [D(e_m,1)]$ where $n$ are $m$ positive integers.
\end{lem}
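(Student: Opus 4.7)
The plan is to reduce the statement to finiteness of the order of $f$ together with a standard diagonalization argument in ${\rm PGL}(2,\mathbb C)$, applied independently to $[A]$ and $[B]$.

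First I would establish that $[A]$ and $[B]$ have finite orders in ${\rm PGL}(2,\mathbb C)$. Since $a,b\geq 3$, the adjunction formula gives genus $g(C_{a,b})=(a-1)(b-1)\geq 4$, so the classical Hurwitz bound implies that ${\rm Aut}(C_{a,b})$ is finite; in particular $f$ has finite order, say $N$. Then $[A]^N\times [B]^N$ acts on $\pp$ and restricts to the identity on $C_{a,b}$. Because the first projection $C_{a,b}\to\mathbb P^1$ is surjective (of degree $b$), for every $p\in\mathbb P^1$ there exists $q\in\mathbb P^1$ with $(p,q)\in C_{a,b}$, whence $[A]^N(p)=p$. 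Thus $[A]^N=1$ and, symmetrically, $[B]^N=1$ in ${\rm PGL}(2,\mathbb C)$.

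The next step is the linear-algebraic fact that every finite-order element of ${\rm PGL}(2,\mathbb C)$ is ${\rm PGL}(2,\mathbb C)$-conjugate to some $[D(\zeta,1)]$ with $\zeta$ a root of unity. To see this, if $[A]$ has order $n$ and $A\in{\rm GL}(2,\mathbb C)$ is any lift, then $A^n=cI_2$ for some $c\in\mathbb C^\ast$; rescaling $A$ by an $n$-th root of $c^{-1}$ yields a lift with $A^n=I_2$. Such a matrix is annihilated by $x^n-1$, which has distinct roots over $\mathbb C$, so $A$ is diagonalizable, say $A=PD(\alpha,\beta)P^{-1}$. In ${\rm PGL}(2,\mathbb C)$ we may divide by $\beta$ to obtain $[A]=[P][D(\alpha/\beta,1)][P]^{-1}$, and $\alpha/\beta$ must be a primitive $n$-th root of unity (else $[A]$ would have order strictly less than $n$). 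Because the paper's convention pins down $e_n$ only as some primitive $n$-th root of unity, we may take $e_n:=\alpha/\beta$, so $[A]=[P][D(e_n,1)][P]^{-1}$. The same argument applied to $[B]$ of order $m$ produces $[Q]\in{\rm PGL}(2,\mathbb C)$ with $[B]=[Q][D(e_m,1)][Q]^{-1}$.

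Finally, I would perform the promised coordinate change: apply $[P]^{-1}$ to the first factor of $\pp$ and $[Q]^{-1}$ to the second factor, independently. This change of variables lives in ${\rm PGL}(2,\mathbb C)\times{\rm PGL}(2,\mathbb C)\subset{\rm Aut}(\pp)$, hence preserves bidegrees and carries the smooth curve $C_{a,b}$ to another smooth curve of bidegree $(a,b)$; under the induced conjugation the automorphism $f$ becomes $[D(e_n,1)]\times[D(e_m,1)]$ restricted to the new curve, as claimed. The only subtle point in the argument is the descent from finite order of $f$ on $C_{a,b}$ to finite orders of $[A]$ and $[B]$ on $\mathbb P^1$ individually, handled by surjectivity of the two projections; the remainder is routine linear algebra over $\mathbb C$.
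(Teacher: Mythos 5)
Your proposal is correct and follows essentially the same route as the paper: finiteness of ${\rm Aut}(C_{a,b})$ from the genus bound, deducing that suitable powers of $A$ and $B$ are scalar, diagonalizing each factor, and normalizing the diagonal entries to $[D(e_n,1)]$ and $[D(e_m,1)]$ by a factorwise coordinate change. You are in fact slightly more explicit than the paper at the one nontrivial step, namely why $f^N={\rm id}$ on $C_{a,b}$ forces $[A]^N=[B]^N=1$ in ${\rm PGL}(2,\mathbb C)$, which you justify via surjectivity of the two projections.
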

\begin{proof}
Since $a,b\geq 3$, the genus $(a-1)(b-1)$ of $C_{a,b}$ is $4$ or more.
Therefore, ${\rm Aut}(C_{a,b})$ is a finite group.
This implies that ${\rm ord}(f)$ is finite.
Let $l:={\rm ord}(f)$.
Then $A^l=\lambda_1I_2$ and $B^l=\lambda_2I_2$ where $\lambda_1,\lambda_2\in\mathbb C^{\ast}$.
This shows that $A$ and $B$ are diagonalizable.
By exchanging the coordinate systems of the first and second components of $\pp$ individually if necessary,
$A$ and $B$ are diagonal matrices $D(a_1,b_1)$ and $D(a_2,b_2)$, respectively, with $a_i, b_i \in \mathbb{C}^*$ satisfying $a_i^l = b_i^l =\lambda_i$ for $i = 1,2$.
Thus, $f=[D(a_1,b_1)]\times[D(a_2,b_2)]$.
Let $s_i$ be the minimal positive integer such that $a_i^{s_i}=b_i^{s_i}=\lambda_i$ for $i=1,2$.
Then $\frac{a_i}{b_i}$ is a primitive $s_i$-th root of unity.
Since $[D(a_i,b_i)]=[D(e_{s_i},1)]$ for $i=1,2$, we conclude that $f=[D(e_{s_1},1)]\times[D(e_{s_2},1)]$.
\end{proof}
Observe that for $f=[D(e_n,1)]\times [D(e_m,1)]$ where $n,m\geq2$, ${\rm Fix}(f)$ is contained in $\{Q_i\}_{i=1}^4$.

Let $\mathcal O_{\mathbb P^1}(-1)$ be the tautological line bundle on $\mathbb P^1$, and let $\mathcal O_{\mathbb P^1}(1)$ be its dual line bundle. The Picard group ${\rm Pic}(\mathbb P^1)$ is isomorphic to $\mathbb{Z}$, generated by the class of the line bundle $\mathcal{O}_{\mathbb{P}^1}(1)$.
Let $p_i\co \pp\ra\mathbb P^1$ be the $i$-th projection for $i=1,2$.
The Picard group \({\rm Pic}(\pp)\) is isomorphic to \(\mathbb{Z} \oplus \mathbb{Z}\), generated by the classes of the line bundles \( p_1^*\mathcal{O}_{\mathbb{P}^1}(1) \) and \( p_2^*\mathcal{O}_{\mathbb{P}^1}(1) \).
The self-intersection number of the line bundle 
\( p_1^*\mathcal{O}_{\mathbb{P}^1}(a) \otimes p_2^*\mathcal{O}_{\mathbb{P}^1}(b) \) is 
$2ab$ for $a,b\in \mathbb Z$. 

Let $\s$ denote the automorphism of $\pp$ that exchanges the first and second components of $\pp$.
The following result is well known among experts, but we include a proof here for the reader's convenience.
\begin{thm}
For $g\in {\rm Aut}(\pp)$, there are automorphisms $g_1,g_2\in {\rm Aut}(\mathbb P^1)$ such that 
$g=g_1\times g_2$ or $g=\s\circ (g_1\times g_2)$.
\end{thm}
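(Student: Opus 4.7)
My plan is to track how $g$ acts on $\mathrm{Pic}(\pp)\cong\mathbb Z L_1\oplus\mathbb Z L_2$, where $L_i:=p_i^{\ast}\mathcal O_{\mathbb P^1}(1)$, and then use the resulting behaviour of the two rulings of $\pp$ to recover $g_1$ and $g_2$. The first step is to characterise $L_1$ and $L_2$ intrinsically as the only primitive effective classes of self-intersection zero. Writing an arbitrary class as $aL_1+bL_2$, its self-intersection is $2ab$, so vanishing of the self-intersection forces $a=0$ or $b=0$; since $L_1$ and $L_2$ are nef, effectiveness forces $a,b\geq 0$; and primitivity then forces $|a|+|b|=1$. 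Thus $L_1$ and $L_2$ are indeed the only primitive effective classes of self-intersection zero on $\pp$.

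Because $g$ is an isomorphism, $g^{\ast}$ is a $\mathbb Z$-linear automorphism of $\mathrm{Pic}(\pp)$ that preserves the intersection form, the effective cone, and the primitivity of classes. By the previous step, $g^{\ast}$ must permute the unordered pair $\{L_1,L_2\}$, which leaves exactly two cases to treat.

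In the case $g^{\ast}L_i=L_i$ for both $i$, the fibres of $p_i$, which are precisely the members of the linear system $|L_i|$, are sent by $g$ to fibres of $p_i$. Hence $g$ descends along both projections to automorphisms $g_1,g_2\in\mathrm{Aut}(\mathbb P^1)$ satisfying $p_i\circ g=g_i\circ p_i$ for $i=1,2$, and comparing coordinates on $\pp$ gives $g=g_1\times g_2$. In the swapped case $g^{\ast}L_1=L_2$ and $g^{\ast}L_2=L_1$, the composition $\s\circ g$ fixes each $L_i$, because $\s^{\ast}$ interchanges them; applying the first case to $\s\circ g$ yields $\s\circ g=g_1\times g_2$, and hence $g=\s\circ(g_1\times g_2)$.

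The main obstacle is the intrinsic characterisation of $L_1$ and $L_2$ in the first step; once it is established that $g^{\ast}$ permutes $\{L_1,L_2\}$, the remainder is routine bookkeeping. This step relies on pinning down the effective cone of $\pp$ as the first quadrant $\{aL_1+bL_2:a,b\geq 0\}$, which follows from the intersection numbers $L_1^2=L_2^2=0$ and $L_1\cdot L_2=1$ together with the nefness of each $L_i$.
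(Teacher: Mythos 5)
Your proof is correct and follows essentially the same route as the paper: use the intersection form on ${\rm Pic}(\pp)$ to show $g^{\ast}$ permutes the two generators, then descend along the projections in the fixed case and compose with $\s$ in the swapped case. The only difference is that you spell out the intrinsic characterisation of $p_1^{\ast}\mathcal O_{\mathbb P^1}(1)$ and $p_2^{\ast}\mathcal O_{\mathbb P^1}(1)$ as the primitive effective classes of self-intersection zero, a detail the paper leaves implicit.
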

\begin{proof}
Since the self-intersection number of $p_i^*\mathcal{O}_{\mathbb{P}^1}(1)$ is $0$ for $i=1,2$,
 the pullback \(g^*\) either fixes or interchanges the generators \(p_1^*\mathcal{O}_{\mathbb{P}^1}(1)\) and \(p_2^*\mathcal{O}_{\mathbb{P}^1}(1)\) in \({\rm Pic}(\pp)\).
If \(g^*\) fixes the generators \(p_1^*\mathcal{O}_{\mathbb{P}^1}(1)\) and \(p_2^*\mathcal{O}_{\mathbb{P}^1}(1)\) in \({\rm Pic}(\pp)\), then 
there are automorphisms $g_1,g_2\in {\rm Aut}(\mathbb P^1)$ such that $g_i\circ p_i=p_i\circ g$ for $i=1,2$.
Thus, $g=g_1\times g_2$.
If \(g^*\) interchanges the generators \(p_1^*\mathcal{O}_{\mathbb{P}^1}(1)\) and \(p_2^*\mathcal{O}_{\mathbb{P}^1}(1)\) in \({\rm Pic}(\pp)\), then $(s_(1,2)\circ g^*)$ fixes them.
Thus, there are automorphisms $g_1,g_2\in {\rm Aut}(\mathbb P^1)$ such that $\s\circ g=g_1\times g_2$, and hence $g=\s\circ (g_1\times g_2)$.
\end{proof}
Note that $\bigl(\s\circ (g_1\times g_2)\bigr)^2=(g_2\circ g_1)\times(g_1\circ g_2)$.
\begin{lem}\label{8}
Let $C_{a,b}\subset \pp$ be a smooth curve of bidegree $(a,b)$ with an automorphism $f=s_{(1,2)}\circ([A]\times[B])$ where $A,B\in{\rm GL}(2,\mathbb C)$. Then the following hold:
\begin{enumerate}
\item[$(i)$]We have $a=b$.
\item[$(ii)$]If $n:={\rm ord}(f^2)\geq 2$, then by replacing the coordinate systems of the first and second components of $\pp$ individually if necessary,
we get that $f=s_{(1,2)}\circ([A']\times[B'])$ such that $f^2=[D(e_n,1)]\times [D(e_n,1)]$, and $A'$ and $B'$ are diagonal matrices where $A'B' = B'A' = D(e_n, 1)$.
\end{enumerate}
\end{lem}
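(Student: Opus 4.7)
My plan is to handle $(i)$ by a Picard-group observation and $(ii)$ by constructing coordinate changes $P$ on the first factor and $Q$ on the second that simultaneously diagonalize the building blocks $A$ and $B$ of $f$, exploiting the freedom to choose $P$ and $Q$ independently.

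For part $(i)$, I would pull back the class $[C_{a,b}]=a\,p_1^*\mathcal O_{\mathbb P^1}(1)+b\,p_2^*\mathcal O_{\mathbb P^1}(1)$ in $\mathrm{Pic}(\pp)\cong \mathbb Z\oplus \mathbb Z$ by $f$. Since $[A]\times[B]$ fixes both generators while $\s$ swaps them, $f^*$ swaps them and
\[f^*[C_{a,b}]=b\,p_1^*\mathcal O_{\mathbb P^1}(1)+a\,p_2^*\mathcal O_{\mathbb P^1}(1).\]
Since $f(C_{a,b})=C_{a,b}$ forces $f^*[C_{a,b}]=[C_{a,b}]$, comparing coefficients yields $a=b$.

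For part $(ii)$, I begin from the identity $f^2=[BA]\times[AB]$ given in the note preceding the lemma; by hypothesis this automorphism has order $n\geq 2$. Since $BA=B(AB)B^{-1}$, the matrices $BA$ and $AB$ are conjugate in $\mathrm{GL}(2,\mathbb C)$, so $[BA]$ and $[AB]$ both have order exactly $n$ in $\mathrm{PGL}(2,\mathbb C)$ and both $BA$ and $AB$ are diagonalizable. I would then pick $P\in \mathrm{GL}(2,\mathbb C)$ with $P^{-1}(BA)P=\lambda D(e_n,1)$ for some $\lambda\in \mathbb C^*$, and \emph{define} $Q:=APD^{-1}$ for an auxiliary invertible diagonal matrix $D$. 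A direct substitution shows
\[A':=Q^{-1}AP=D,\qquad B':=P^{-1}BQ=P^{-1}(BA)P\cdot D^{-1}=\lambda D(e_n,1)D^{-1},\]
both diagonal. Since diagonal matrices commute, $A'B'=B'A'=\lambda D(e_n,1)$; rescaling the representative $A'$ by $1/\lambda$ (which does not change $[A']\in \mathrm{PGL}(2,\mathbb C)$) normalizes this to $D(e_n,1)$, and then $f^2=[B'A']\times[A'B']=[D(e_n,1)]\times[D(e_n,1)]$ follows from the same note.

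The main obstacle is the simultaneous diagonalization in $(ii)$: $A$ and $B$ cannot be treated independently, because independent coordinate changes on the two factors enter through the non-conjugation rules $A\mapsto Q^{-1}AP$ and $B\mapsto P^{-1}BQ$. Once one insists that $A'$ be diagonal, the ansatz $Q=APD^{-1}$ is essentially forced; the key observation is that this same choice makes $B'$ equal to the conjugate $P^{-1}(BA)P$ of $BA$ (up to a diagonal factor), and $P$ was already chosen to diagonalize $BA$. After this, the remainder is a formal calculation and a scalar normalization.
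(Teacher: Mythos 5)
Your proof is correct. Part $(i)$ is exactly the paper's argument: $f^*$ swaps the two generators of ${\rm Pic}(\pp)$ because of the factor $\s$, and invariance of the class of $C_{a,b}$ forces $a=b$.

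For part $(ii)$ you reach the same conclusion by a genuinely different construction. The paper conjugates $f$ by $[S]\times[T]$ where $S$ and $T$ are chosen to diagonalize \emph{both} $BA$ and $AB$ (after a scalar normalization making their eigenvalues $1$ and $e_n$); this immediately gives $f'^2=[D(e_n,1)]\times[D(e_n,1)]$ and $A'B'=B'A'=D(e_n,1)$, but the diagonality of $A'$ and $B'$ is not automatic -- it is deduced at the end from the fact that $A'$ and $B'$ commute with $D(e_n,1)$, whose diagonal entries are distinct precisely because $n\geq 2$. You instead diagonalize only $BA$ via $P$ and then \emph{define} the change of coordinates on the second factor by $Q:=APD^{-1}$, which makes $A'=Q^{-1}AP=D$ and $B'=P^{-1}(BA)PD^{-1}=\lambda D(e_n,1)D^{-1}$ diagonal by construction; the commutation relation and the normalization $A'B'=B'A'=D(e_n,1)$ then follow from commutativity of diagonal matrices and a harmless rescaling of the representative $A'$. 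Your computation checks out (and one may as well take $D=I_2$). What your route buys is that it bypasses the centralizer argument entirely, so the hypothesis $n\geq 2$ is used only to know that $[BA]$ has a well-defined primitive $n$-th root of unity as eigenvalue ratio, not to force diagonality at the end. Both arguments share the same small implicit step, namely that ${\rm ord}(f^2)=n$ as an automorphism of the curve implies that $[AB]$ and $[BA]$ have order exactly $n$ in ${\rm PGL}(2,\mathbb C)$; this is fine since a nontrivial product automorphism of $\pp$ cannot fix a curve of bidegree $(a,b)$ with $a,b\geq 1$ pointwise, but it is worth being aware that you, like the paper, are using it.
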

\begin{proof}
Since $\mathcal O_{\pp}(C_{a,b})=p_1^{\ast}\mathcal O_{\mathbb P^1}(a)\otimes p_2^{\ast}\mathcal O_{\mathbb P^1}(b)$ in ${\rm Pic}(\pp)$ and $f(C_{a,b})=C_{a,b}$, 
we get that $f^{\ast}(p_1^{\ast}\mathcal O_{\mathbb P^1}(a)\otimes p_2^{\ast}\mathcal O_{\mathbb P^1}(b))=p_1^{\ast}\mathcal O_{\mathbb P^1}(a)\otimes p_2^{\ast}\mathcal O_{\mathbb P^1}(b)$ in ${\rm Pic}(\pp)$.
By the presence of \(s_{(12)}\) in the definition of \(f\), the pullback \(f^*\) interchanges the generators \(p_1^*\mathcal{O}_{\mathbb{P}^1}(1)\) and \(p_2^*\mathcal{O}_{\mathbb{P}^1}(1)\).
Thus,  $f^{\ast}(p_1^{\ast}\mathcal O_{\mathbb P^1}(a)\otimes p_2^{\ast}\mathcal O_{\mathbb P^1}(b))=p_1^{\ast}\mathcal O_{\mathbb P^1}(b)\otimes p_2^{\ast}\mathcal O_{\mathbb P^1}(a)$ in ${\rm Pic}(\pp)$.
Then we have $p_1^{\ast}\mathcal O_{\mathbb P^1}(a)\otimes p_2^{\ast}\mathcal O_{\mathbb P^1}(b))=p_1^{\ast}\mathcal O_{\mathbb P^1}(b)\otimes p_2^{\ast}\mathcal O_{\mathbb P^1}(a)$ in ${\rm Pic}(\pp)$.
Therefore, we have $a=b$.

We assume that $n\geq 2$.
Since $f^2=[BA]\times [AB]$, and ${\rm ord}(f^2)=n<\infty$, by multiplying by a constant if necessary, we may assume that the eigenvalues of $AB$ are $1$ and $e_n$.
Note that $AB$ and $BA$ have the same eigenvalues.
Let $S,T\in{\rm GL}(2,\mathbb C)$ be matrices such that 
\[S(BA)S^{-1}=T(AB)T^{-1}=D(e_n,1).\]
We set $k:=[S]\times [T]\in{\rm Aut}(\pp)$ and $f':=k\circ f\circ k^{-1}\in{\rm Aut}(\pp)$.
Then 
\[f'=\s\circ([TAS^{-1}]\times[SBT^{-1}])\]
and
\begin{equation*}
\begin{split}
f'\circ f'&=[SBAS^{-1}]\times[TABT^{-1}]\\
&=[D(e_n,1)]\times [D(e_n,1)].
\end{split}
\end{equation*}
Therefore,	by replacing the coordinate system if necessary,
$f=\s\circ ([A']\times [B'])$ where $B'A'=A'B'=D(e_n,1)$.
Since $B'A'=A'B'=D(e_n,1)$, 
\[A'D(e_n,1)=D(e_n,1)A'\ \ {\rm and}\ \ B'D(e_n,1)=D(e_n,1)B'.\]
Since $n\geq 2$ and $A',B'\in{\rm GL}(2,\mathbb C)$, by the above equations, we get that $A'$ and $B'$ are diagonal matrices.
\end{proof}

Here, we introduce theorem from [\ref{bio:tt12}] concerning curves in $\pp$.
\begin{thm}\label{tt12}$([\ref{bio:tt12},\ {\rm Theorem}\ 2 {\rm and\ Remark}\ 4])$.
Let $C_{a,b}$ be a smooth curve of bidegree $(a,b)$ in $\pp$ where $a,b\geq 3$.
We assume that the morphism ${p_2}_{|C_{a,b}}$ is Galois.
Then the Galois group of ${p_2}_{|C_{a,b}}$ is a subgroup of ${\rm Aut}(\mathbb{P}^1_{(1)})$, where $\mathbb{P}^1_{(1)}$ denotes the first factor of $\mathbb{P}^1 \times \mathbb{P}^1$.
The following assertions then hold:
\begin{enumerate}
\item[$(1)$]The Galois group of ${p_2}_{|C_{a,b}}$ is one of the following:
\begin{enumerate}
\item[$(a)$]a cyclic group $C_a$ of order $a$; 
\item[$(b)$]a dihedral group $D_{2m}$ of order $2m=a$;
\item[$(c)$]the alternating group $A_4$ on four letters;
\item[$(d)$]the symmetric group $S_4$ on four letters;
\item[$(e)$]the alternating group $A_5$ on five letters.
\end{enumerate}
\item[$(2)$]
By choosing suitable coordinate system, we can express $C_{a,b}$ up to isomorphism as follows:
\begin{enumerate}
\item[$(a)$]$F(Y_0,Y_1)X_0^a-G(Y_0,Y_1)X_1^a=0$;
\item[$(b)$]$4F(Y_0,Y_1)X_0^mX_1^m+G(Y_0,Y_1)(X_0^m-X_1^m)^2=0$, where $a=2m$;
\item[$(c)$]$F(Y_0,Y_1)(X_0^4-2\sqrt{3}X_0^2X_1^2-X_1^4)^3-G(Y_0,Y_1)(X_0^4+2\sqrt{3}X_0^2X_1^2-X_1^4)^3=0$;
\item[$(d)$]$108F(Y_0,Y_1)X_0^4X_1^4(X_0^4-X_1)^4-G(Y_0, Y_1)(X_0^8+14X_0^4X_1^4+X_0^8)^3=0$;
\item[$(e)$]$1728F(Y_0,Y_1)X_0^5X_1^5(X_0^{10}+11X_0^5X_1^5-X_1^{10})^5\\
+G(Y_0,Y_1)(X_0^{20}-228X_0^{15}X_1^5+494X_0^{10}X_1^{10}+228X_0^5X_1^{15}+X_1^{20})^3=0$.
\end{enumerate}
Here, $F(Y_0,Y_1)$ and $G(Y_0,Y_1)$ are homogeneous polynomials of degree $b$.
Each case corresponds to both the group in $(1)$ and the equation in $(2)$ with the same label.
\end{enumerate}
\end{thm}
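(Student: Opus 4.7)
My plan is to convert the Galois hypothesis into a statement about a finite subgroup of ${\rm Aut}(\mathbb{P}^1_{(1)})$ acting on $\mathbb{P}^1_{(1)}$ and stabilizing the divisor $C_{a,b}$, then invoke the classical classification of finite subgroups of ${\rm PGL}(2,\mathbb{C})$ together with their invariant theory on $\mathbb{P}^1$.

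Let $G$ be the Galois group of ${p_2}_{|C_{a,b}}$, so $|G|=a$. By the cited Theorem 3 of $[\ref{bio:tt12}]$, each $g\in G$ extends to some $\widehat{g}\in{\rm Aut}(\pp)$ with $\widehat{g}(C_{a,b})=C_{a,b}$. Writing $\widehat{g}=g_1\times g_2$ or $\widehat{g}=\s\circ(g_1\times g_2)$ as in the preceding structural result, the deck-transformation property says that $\widehat{g}$ maps each $p_2$-fiber of $C_{a,b}$ to itself. In the swap case, the image of $\mathbb{P}^1\times\{y\}$ lies in the $p_1$-fiber $\{g_2(y)\}\times\mathbb{P}^1$, which meets $\mathbb{P}^1\times\{y\}$ in at most one point; this is incompatible with preserving a set of $a\geq 3$ points, so the swap case is excluded. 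In the direct product case, the induced action on the base of $p_2$ is $g_2$, and preservation of every fiber forces $g_2={\rm id}$. Hence $g\mapsto g_1$ embeds $G$ into ${\rm Aut}(\mathbb{P}^1_{(1)})={\rm PGL}(2,\mathbb{C})$, proving the first assertion of (1). The classical classification of finite subgroups of ${\rm PGL}(2,\mathbb{C})$ gives exactly the five possibilities $C_n$, $D_{2n}$, $A_4$, $S_4$, $A_5$, and matching orders with $|G|=a$ yields families (a)--(e).

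For part (2), I would choose coordinates on $\mathbb{P}^1_{(1)}$ so that $G$ acts in its standard form. Viewing $\ff$ as an element of $\mathbb{C}[Y_0,Y_1]_b\otimes\mathbb{C}[X_0,X_1]_a$, invariance of the divisor $C_{a,b}$ under the lifted action of $G\times\{{\rm id}\}$ on $\pp$ forces $F$, as a polynomial in $X_0,X_1$, to be a $G$-semi-invariant (i.e.\ transformed by a linear character of $G$). In each of the five cases the space of such $G$-semi-invariants in $\mathbb{C}[X_0,X_1]_a$ is two-dimensional, with explicit bases given by the classical polyhedral forms: $\{X_0^a,X_1^a\}$ for $C_a$; $\{X_0^mX_1^m,\,(X_0^m-X_1^m)^2\}$ for $D_{2m}$; and the well-known tetrahedral, octahedral, and icosahedral forms in degrees $12$, $24$, $60$ displayed in the statement for $A_4$, $S_4$, $A_5$. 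Taking $F(Y_0,Y_1)$ and $G(Y_0,Y_1)$ in $\mathbb{C}[Y_0,Y_1]_b$ as the coefficients of the two basis elements yields the required normal forms.

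The principal obstacle is the polyhedral cases (c)--(e). One must produce semi-invariants of degrees $12$, $24$, $60$ in precisely the normalizations appearing in the statement and check that they span the two-dimensional semi-invariant subspace in that degree. This is the classical invariant theory of the binary polyhedral groups and can be handled uniformly via a Molien series computation together with the observation that each polyhedral $G\leq{\rm PGL}(2,\mathbb{C})$ admits a pair of algebraically independent semi-invariants whose zero sets encode the branching data of $\mathbb{P}^1\to\mathbb{P}^1/G\cong\mathbb{P}^1$; these two forms are the required basis in the top degree.
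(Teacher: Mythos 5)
The paper does not prove this statement: Theorem \ref{tt12} is imported verbatim from Takahashi's paper $[\ref{bio:tt12},\ {\rm Theorem}\ 2\ {\rm and\ Remark}\ 4]$, so there is no internal proof to compare against. Your reconstruction follows what is essentially the standard route (and, in outline, the cited source's): extend each deck transformation to an automorphism of $\pp$, use the deck property $p_2\circ\widehat g=p_2|_{C_{a,b}}$ to exclude the factor-swapping case and to force $g_2={\rm id}$, embed the Galois group of order $a$ into ${\rm PGL}(2,\mathbb C)$ acting on the first factor, invoke the classification of finite subgroups of ${\rm PGL}(2,\mathbb C)$, and read off the normal forms from the invariant theory of the binary polyhedral groups. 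The fiber-counting exclusion of the swap case is correct (one can argue even faster: the deck property in the swap case would put $C_{a,b}$ inside the graph of $g_1$, a $(1,1)$-curve), and the assignment $g\mapsto g_1$ is a well-defined injective homomorphism because an automorphism $h_1\times h_2$ fixing $C_{a,b}$ pointwise is the identity, as $C_{a,b}$ surjects onto both factors.

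One step needs more care than you give it. The space of $G$-semi-invariants in $\mathbb C[X_0,X_1]_a$ is two-dimensional only for one particular character of (a binary lift of) $G$; for the remaining characters it is at most one-dimensional and spanned by a form divisible by a proper $G$-semi-invariant of lower degree (e.g. $X_0^iX_1^{a-i}$ with $0<i<a$ in the cyclic case, or $\Phi^2\Psi$ in the tetrahedral case). Your argument as written does not say why $\ff$ transforms by the \emph{right} character. The missing ingredient is the irreducibility of $C_{a,b}$ (which the paper uses elsewhere in exactly this way, e.g. in the proofs of Lemma \ref{2} and Proposition \ref{8.1}): if the semi-invariant space for the character of $\ff$ were one-dimensional, then $\ff$ would be divisible by a nonconstant binary form in $X_0,X_1$, contradicting irreducibility. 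With that supplement, together with the syzygy/Molien verification you already flag for the polyhedral cases, the argument closes and recovers the stated normal forms.
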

The Galois coverings of rational curves, where the covering space is a non-singular curve on the projective plane, have been studied in $[\ref{bio:th23g},\ref{bio:my00},\ref{bio:y01f}]$.
Moreover, methods for determining whether such a curve is a cyclic covering of a rational curve based on its automorphisms are provided $([\ref{bio:bb2016},\ref{bio:th21l}])$.
The following lemma rephrases the case $(a)$ of Theorem \ref{tt12} in terms of automorphisms.
\begin{lem}\label{th}
Let $C_{a,b}\subset \pp$ be a smooth curve of bidegree $(a,b)$.
We assume that $C_{a,b}$ has an automorphism $f$ such that $f^k=[D(e_a,1)]\times [I_2]$ for some $k\geq 1$.
Then $C_{a,b}/\langle f\rangle \cong \mathbb P^1$.
\end{lem}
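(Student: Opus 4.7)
The plan is to reduce to the fact that the second projection already realises $\mathbb P^1$ as the quotient of $C_{a,b}$ by $\langle f^k\rangle$, and then to take a further cyclic quotient.

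Set $g:=f^k=[D(e_a,1)]\times[I_2]$, viewed as an automorphism of $\pp$. Because $g$ acts trivially on the second factor, it preserves every fiber of the second projection $p_2\co\pp\ra\mathbb P^1$, and on each such fiber $\mathbb P^1\times\{y\}$ it acts as $[D(e_a,1)]$, which has order $a$ with exactly two fixed points $([1:0],y)$ and $([0:1],y)$. The restriction $\pi:=p_2|_{C_{a,b}}\co C_{a,b}\ra\mathbb P^1$ is a finite morphism of degree $a$, since a fiber of $p_2$ meets $C_{a,b}$ in $a$ points counted with multiplicity. As a preliminary, I would check that $\langle g\rangle$ acts faithfully on $C_{a,b}$: if $g^j|_{C_{a,b}}$ were trivial for some $0<j<a$, then the first coordinate of every point of $C_{a,b}$ would be fixed by $[D(e_a^j,1)]$, which has only two fixed points on $\mathbb P^1$, contradicting the surjectivity of $p_1|_{C_{a,b}}$ (whose degree $b$ is positive).

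Next, for a generic $y\in\mathbb P^1$, the $a$ points of $\pi^{-1}(y)$ avoid the two fixed points of $[D(e_a,1)]$ on the fiber, so $\langle g\rangle$ acts freely on $\pi^{-1}(y)$; since this orbit has size $a=|\pi^{-1}(y)|$, the action is in fact transitive on the fiber. Hence $\pi$ is a Galois covering with deck group $\langle g\rangle$, which gives $C_{a,b}/\langle g\rangle\cong\mathbb P^1$.

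Finally, since $g=f^k$, the cyclic subgroup $\langle g\rangle$ is normal in the cyclic group $\langle f\rangle$, so
\[C_{a,b}/\langle f\rangle\cong\bigl(C_{a,b}/\langle g\rangle\bigr)\big/\bigl(\langle f\rangle/\langle g\rangle\bigr),\]
which is a quotient of $\mathbb P^1$ by a finite cyclic group and therefore again $\mathbb P^1$ (by Riemann--Hurwitz, or L\"uroth). The only substantive point is the Galois claim in the middle step, which reduces to the elementary transitivity observation on a generic fiber of $p_2$; everything else is formal.
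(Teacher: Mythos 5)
Your proof is correct and follows essentially the same route as the paper: both identify $p_2|_{C_{a,b}}$ as a degree-$a$ Galois covering with Galois group $\langle f^k\rangle$, deduce $C_{a,b}/\langle f^k\rangle\cong\mathbb P^1$, and then pass to the further quotient by $\langle f\rangle/\langle f^k\rangle$. You simply supply more of the details (faithfulness of the action and transitivity on a generic fiber) that the paper leaves implicit.
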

\begin{proof}
For the morphism ${p_2}_{|C_{a,b}}\co C_{a,b}\ra\mathbb P^1$,
the automorphism $f^k=[D(e_a,1)]\times [I_2]$ satisfies ${p_2}_{|C_{a,b}}={p_2}_{|C_{a,b}}\circ f^k$.
Since the bidegree of $C_{a,b}$ is $(a,b)$,
the degree of ${p_2}_{|C_{a,b}}$ is $a$.
Thus, ${p_2}_{|C_{a,b}}\co C_{a,b}\ra\mathbb P^1$ is a Galois morphism with Galois group $\langle f^k\rangle $.
Consequently, we have $C_{a,b}/\langle f^k\rangle \cong\mathbb P^1$, and therefore $C_{a,b}/\langle f\rangle \cong\mathbb P^1$.
\end{proof}

\section{Proof of main theorems}
For $d\geq 0$, let $\mathbb C[Y_0,Y_1]_d\subset\mathbb C[Y_0,Y_1]$ be the vector space of forms of degree $d$.
\begin{lem}\label{2}
Let $C_{a,b}\subset \pp$ be a smooth curve of bidegree $(a,b)$ with
an automorphism $f$ such that $f=[D(e_n,1)]\times [I_2]$.
Then ${\rm ord}(f)$ divides $a$.
\end{lem}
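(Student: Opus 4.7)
The plan is to use the second projection $p_2\co\pp\ra\mathbb P^1$ and argue as in Lemma~\ref{th}. Since $f=[D(e_n,1)]\times[I_2]$ acts trivially on the second factor of $\pp$, we have $p_2\circ f=p_2$. Restricting to the curve, $p_2|_{C_{a,b}}\co C_{a,b}\ra\mathbb P^1$ is then constant on every $\langle f\rangle$-orbit, so it factors through the quotient map $\pi\co C_{a,b}\ra C_{a,b}/\langle f\rangle$ as $p_2|_{C_{a,b}}=q\circ\pi$ for some morphism $q\co C_{a,b}/\langle f\rangle\ra\mathbb P^1$.

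Next I take degrees. The bidegree hypothesis on $C_{a,b}$ gives $\deg(p_2|_{C_{a,b}})=a$, while $\deg(\pi)=|\langle f\rangle|={\rm ord}(f)$ because $\langle f\rangle\subset{\rm Aut}(C_{a,b})$ acts faithfully (by definition of ${\rm ord}(f)$ as the order of $f$ in ${\rm Aut}(C_{a,b})$). Multiplicativity of degrees in the factorization $p_2|_{C_{a,b}}=q\circ\pi$ yields $a={\rm ord}(f)\cdot\deg(q)$, so ${\rm ord}(f)$ divides $a$.

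No serious obstacle is expected: the argument is essentially a one-line degree count, closely parallel to the proof of Lemma~\ref{th}. One could instead try to read the divisibility off the linear constraints $e_n^is_{i,j}=\lambda s_{i,j}$ imposed on the coefficients of $F$ by the invariance equation $F(e_nX_0,X_1,Y_0,Y_1)=\lambda F$, using Lemma~\ref{1} to guarantee nonzero coefficients near $Q_1,\dots,Q_4$. However, the resulting combinatorics is more involved than the projection argument, since $F_{Q_1}\neq0$ and $F_{Q_3}\neq0$ need only produce some nonzero $s_{i,j}$ with $i\in\{a-1,a\}$ and $i\in\{0,1\}$ respectively, and extracting $n\mid a$ directly from this would be less clean than exploiting the invariance of $p_2$.
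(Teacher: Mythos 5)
Your argument is correct, but it is genuinely different from the paper's. The paper proves Lemma \ref{2} by a coefficient comparison: writing $F=\sum_{i}X_0^iX_1^{a-i}F_i(Y_0,Y_1)$, it uses irreducibility of $F$ (not Lemma \ref{1}, as you speculate) to see that the extreme coefficients $F_0$ and $F_a$ are nonzero, and then $f^{\ast}F=tF$ forces $t=e_n^0=e_n^a$, hence $e_n^a=1$ and $n\mid a$. Your route instead exploits $p_2\circ f=p_2$, factors ${p_2}_{|C_{a,b}}=q\circ\pi$ through the quotient $C_{a,b}/\langle f\rangle$, and reads off $a={\rm ord}(f)\cdot\deg(q)$ from multiplicativity of degrees (equivalently, the tower $p_2^{\ast}K(\mathbb P^1)\subset K(C_{a,b})^{\langle f\rangle}\subset K(C_{a,b})$); this is the same mechanism the paper itself uses later in Lemma \ref{th}, and all the ingredients ($\deg {p_2}_{|C_{a,b}}=a$, faithfulness of $\langle f\rangle$ on $C_{a,b}$, existence of the factorization through the quotient of a projective curve by a finite group) are standard and available here. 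Your proof is shorter and more conceptual, and it additionally exhibits $C_{a,b}/\langle f\rangle$ as an intermediate cover of $\mathbb P^1$; the paper's computation is slightly more informative in that it pins down the multiplier $t=1$ and shows directly that $n$ itself (the order of the matrix $D(e_n,1)$, not merely the order of the restriction of $f$ to the curve) divides $a$, which is the form in which the lemma is reused in the later coefficient analyses. Your closing remark mildly mischaracterizes the coefficient approach as combinatorially involved; grouping by powers of $X_0$ as the paper does makes it a two-line argument.
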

\begin{proof}
Let $\ff$ be the defining equation of $C_{a,b}$.
We set 
\[F(X_0,X_1,Y_0,Y_1)=\sum_{0\leq i\leq a}X_0^iX_1^{a-i}F_i(Y_0,Y_1)\]
where $F_i(Y_0,Y_1)\in\mathbb C[Y_0,Y_1]_b$ for $0\leq i\leq a$.
Since $C_{a,b}$ is smooth, it is in particular integral.
Thus, $\ff$ is irreducible.
If $F_0(Y_0,Y_1)=0$ (resp. $F_a(Y_0,Y_1)=0$), then $\ff$ is divisible by $X_1$ (resp. $X_0$), which contradicts the irreducibility of $\ff$.
Therefore, both $F_0(Y_0,Y_1)$ and $F_a(Y_0,Y_1)$ are non-zero polynomials.

Since $f=[D(e_n,1)]\times [I_2]$ is an automorphism of $C_{a,b}$,
	$f^{\ast}\ff=t\ff$ for some $t\in\mathbb C^{\ast}$, i.e.
	\[\sum_{0\leq i\leq a}e_n^iX_0^iX_1^{a-i}F_i(Y_0,Y_1)=\sum_{0\leq i\leq a}tX_0^iX_1^{a-i}F_i(Y_0,Y_1).\]
	Since $F_0(Y_0,Y_1)\not=0$ and $F_a(Y_0,Y_1)\not=0$, we get that $t=e_n^0=e_n^a$, and hence $e_n^a=1$. 
	Therefore, the order of $f$ divides $a$.
\end{proof}
\begin{lem}\label{3}
Let $C_{a,b}\subset \pp$ be a smooth curve of bidegree $(a,b)$ with an automorphism $f$ such that $f=[D(e_n,1)]\times [D(e_m,1)]$ where $n,m\geq 2$.
Then $|\{Q_i\}_{i=1}^4 \cap C_{a,b}|\not=1$. 
\end{lem}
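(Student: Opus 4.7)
The plan is to argue by contradiction. Suppose that exactly one of $Q_1,\ldots,Q_4$ lies on $C_{a,b}$. Interchanging the coordinates $X_0\leftrightarrow X_1$ or $Y_0\leftrightarrow Y_1$ permutes the four corners and only replaces $e_n$ or $e_m$ by its inverse (still a primitive root of the same order, since $n,m\geq 2$), so I may assume without loss of generality that the unique such point is $Q_1$. By Lemma~\ref{1} this translates into $s_{a,b}=0$ while $s_{a,0}, s_{0,b}, s_{0,0}$ are all nonzero, and the nonvanishing of $\fa$ guaranteed by the same lemma forces at least one of $s_{a-1,b}, s_{a,b-1}$ to be nonzero.

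I will then exploit the invariance $f^{\ast}\ff=t\ff$ for some $t\in\mathbb{C}^{\ast}$. Because $f$ rescales the monomial $X_0^iX_1^{a-i}Y_0^jY_1^{b-j}$ by $e_n^i e_m^j$, this invariance is equivalent to the character condition
\[
e_n^i e_m^j = t \qquad \text{for every } (i,j) \text{ with } s_{i,j}\neq 0.
\]
Feeding the three surviving corners $(a,0)$, $(0,b)$, $(0,0)$ into this relation immediately yields $t=1$, $e_n^a=1$, and $e_m^b=1$.

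The contradiction is then short. If $s_{a-1,b}\neq 0$, applying the character condition at $(a-1,b)$ together with $e_m^b=1$ gives $e_n^{a-1}=1$, which combined with $e_n^a=1$ forces $e_n=1$, contradicting $n\geq 2$. If instead $s_{a,b-1}\neq 0$, the parallel calculation forces $e_m=1$, contradicting $m\geq 2$. The only real obstacle is checking that this template extends uniformly to the other three corner cases, but this is routine: each of $\fb$, $\fc$, $\fd$ supplies two off-corner monomials whose exponents differ from the corresponding corner by a single unit in one coordinate, so pairing either such monomial with the three nonvanishing corner relations again forces $e_n=1$ or $e_m=1$. Hence no configuration of exactly one $Q_i$ on $C_{a,b}$ is possible.
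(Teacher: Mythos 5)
Your proof is correct and follows essentially the same strategy as the paper: use Lemma \ref{1} to identify which coefficients vanish, extract the relation $e_n^ie_m^j=t$ from $f^{\ast}\ff=t\ff$, deduce $t=1$ and $e_n^a=e_m^b=1$ from the three corners not on the curve, and then derive $e_n=1$ or $e_m=1$ from the nonvanishing of the corner polynomial at the one point on the curve. The only cosmetic difference is that you normalize the distinguished point to $Q_1$ (so the contradiction comes from the exponents $(a-1,b)$ and $(a,b-1)$), whereas the paper normalizes to $Q_4$ and uses $(1,0)$ and $(0,1)$.
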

\begin{proof}
We assume that $|\{Q_i\}_{i=1}^4 \cap C_{a,b}|=1$, and for simplicity, let $\{Q_i\}_{i=1}^4 \cap C_{a,b}=\{Q_4\}$.
Let $\ff$ be the defining equation of $C_{a,b}$.
By Lemma \ref{1}, the polynomial $F_{Q_1}$ contains the monomial $X_0^a Y_0^b$, $F_{Q_2}$ contains $X_0^a Y_1^b$, and $F_{Q_3}$ contains $X_1^a Y_0^b$, whereas $F_{Q_4}$ does not contain the monomial $X_1^a Y_1^b$. Therefore, let
\[
F :=s_{a,b} X_0^a Y_0^b + s_{a,0} X_0^a Y_1^b + s_{0,b} X_1^a Y_0^b + s_{1,0} X_0 X_1^{a-1} Y_1^b + s_{0,1} X_1^a Y_0 Y_1^{b-1}
\]
where $s_{a,b}, s_{a,0}, s_{0,b} \in \mathbb{C}^\ast$ and $s_{1,0}, s_{0,1} \in \mathbb{C}$ with $(s_{1,0}, s_{0,1}) \neq (0,0)$.
Then, by the equation~\eqref{01}, 
\[
\sum_{i=1}^4F_{Q_i} = F + \sum_{(i,j) \in \mathbb{E} \setminus \{(a,b), (a,0), (0,b), (1,0), (0,1)\}} s_{i,j} X_0^i X_1^{a-i} Y_0^j Y_1^{b-j},
\]
where $s_{i,j} \in \mathbb{C}$ for $(i,j) \in \mathbb{E} \setminus \{(a,b), (a,0), (0,b), (1,0), (0,1)\}$.
Since $f$ is an automorphism of $C_{a,b}$, we have
$f^{\ast}\ff=t\ff$ for some $t\in\mathbb C^{\ast}$.
Thus, by the equation~\eqref{02},
\[f^{\ast}F=tF,\]
which leads to the equation:
\begin{dmath*}
e_n^ae_m^bs_{a,b}X_0^aY_0^b+e_n^as_{a,0}X_0^aY_1^b+e_m^bs_{0,b}X_1^aY_0^b+e_ns_{1,0}X_0X_1^{a-1}Y_1^b+e_ms_{0,1}X_1^aY_0Y_1^{b-1}\\
=t(s_{a,b}X_0^aY_0^b+s_{a,0}X_0^aY_1^b+s_{0,b}X_1^aY_0^b+s_{1,0}X_0X_1^{a-1}Y_1^b+s_{0,1}X_1^aY_0Y_1^{b-1}).
\end{dmath*}
By comparing the coefficients of the monomials $X_0^a Y_0^b$, $X_0^a Y_1^b$, $X_1^a Y_0^b$, $X_0 X_1^{a-1} Y_1^b$, and $X_1^a Y_0 Y_1^{b-1}$, and using the assumptions that $s_{a,b}, s_{a,0}, s_{0,b} \in \mathbb{C}^\ast$ and $(s_{1,0}, s_{0,1}) \neq (0,0)$, we obtain the following equations:
\[e_n^ae_m^b=t,\ \ \ \ e_n^a=t,\ \ \ \ e_m^b=t,\ \ \ \ (e_n-t)(e_m-t)=0.\]
From the first three equations, we obtain $t=t^2$.
Since $t\in\mathbb C^{\ast}$, $t=1$.
The fourth equation implies $e_n=1$ or $e_m=1$.
This contradicts that $n,m\geq2$.
Therefore, $|\{Q_i\}_{i=1}^4 \cap C_{a,b}|\not=1$.
\end{proof}
We consider the case where $|\{Q_i\}_{i=1}^4 \cap C_{a,b}|=0$.
For integers $k,l\geq 1$, 
let ${\rm lcm}(k,l)$ be the least common multiple of $k$ and $l$.
\begin{pro}\label{4-}
Let $C_{a,b}\subset \pp$ be a smooth curve of bidegree $(a,b)$.
We assume that $|\{Q_i\}_{i=1}^4 \cap C_{a,b}|=0$, and $C_{a,b}$ has an automorphism $f=[D(e_n,1)]\times [D(e_m,1)]$
where $n,m\geq 2$.
Then ${\rm ord}(f)$ divides ${\rm lcm}(a,b)$, $e_n^a=e_m^b=1$, and $F_Q=s_{a,b} X_0^a Y_0^b + s_{a,0} X_0^a Y_1^b + s_{0,b} X_1^a Y_0^b + s_{0,0} X_1^aY_1^{b}$
where $s_{a,b}, s_{a,0}, s_{0,b},s_{0,0} \in \mathbb{C}^\ast$.
\end{pro}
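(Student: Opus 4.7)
The plan is to mimic the coefficient-comparison argument of Lemma \ref{3}, but now exploiting the fact that all four corner points lie off the curve, so Lemma \ref{1} guarantees that the four ``corner coefficients'' $s_{a,b}$, $s_{a,0}$, $s_{0,b}$, $s_{0,0}$ are simultaneously nonzero. Writing $F$ in the standard monomial basis, the automorphism $f=[D(e_n,1)]\times [D(e_m,1)]$ acts on each monomial $X_0^iX_1^{a-i}Y_0^jY_1^{b-j}$ by multiplication by $e_n^ie_m^j$, and the identity $f^{\ast}F=tF$ for some $t\in\mathbb C^{\ast}$ forces $e_n^ie_m^j=t$ whenever $s_{i,j}\neq 0$.

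First I would apply this to the four corner monomials. Comparing coefficients of $X_0^aY_0^b$, $X_0^aY_1^b$, $X_1^aY_0^b$, $X_1^aY_1^b$ yields
\[e_n^ae_m^b=t,\qquad e_n^a=t,\qquad e_m^b=t,\qquad 1=t.\]
Thus $t=1$, and consequently $e_n^a=e_m^b=1$. Since $e_n$ is a primitive $n$-th root of unity and $e_m$ is a primitive $m$-th root of unity, this is equivalent to $n\mid a$ and $m\mid b$. Because ${\rm ord}(f)={\rm lcm}(n,m)$, this gives ${\rm ord}(f)\mid {\rm lcm}(a,b)$.

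Next I would rule out the remaining eight monomials appearing in $\mathbb E$. For each such pair $(i,j)$ in $\mathbb E\setminus\{(a,b),(a,0),(0,b),(0,0)\}$, the condition $e_n^ie_m^j=1$ together with the already established $e_n^a=e_m^b=1$ reduces to a condition of the form $e_n^r=1$ or $e_m^s=1$ with $r\in\{1,a-1\}$ or $s\in\{1,b-1\}$. For instance, the pair $(a-1,b)$ gives $e_n^{a-1}=1$, which combined with $e_n^a=1$ forces $e_n=1$, contradicting $n\geq 2$; the pair $(1,0)$ directly forces $e_n=1$; and symmetric cases in the $Y$-variables handle $(a,b-1)$, $(0,b-1)$, $(a,1)$, $(0,1)$, $(1,b)$, $(a-1,0)$ the same way. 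Consequently every $s_{i,j}$ for $(i,j)\in\mathbb E\setminus\{(a,b),(a,0),(0,b),(0,0)\}$ must vanish, so in view of \eqref{01} only the four corner monomials survive, yielding the claimed form of $\sum_{i=1}^4 F_{Q_i}$ with all four coefficients in $\mathbb C^{\ast}$.

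No genuine obstacle is anticipated: the argument is a bookkeeping exercise once one notices that the hypothesis $|\{Q_i\}_{i=1}^4\cap C_{a,b}|=0$ makes the four corner monomials available as ``pivots'' that instantly pin down $t=1$. The only point requiring some care is the case analysis of the eight non-corner elements of $\mathbb E$; I would organize this in a single table or compact enumeration, grouping cases by whether the constraint produced involves $n$ or $m$, and in each case deducing the contradiction with $n,m\geq 2$ from $e_n^a=1$ (respectively $e_m^b=1$).
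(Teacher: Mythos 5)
Your proposal is correct and follows essentially the same route as the paper: compare coefficients of the four corner monomials (all nonzero by Lemma \ref{1} since no $Q_i$ lies on the curve) to get $t=1$ and $e_n^a=e_m^b=1$, hence ${\rm ord}(f)\mid{\rm lcm}(a,b)$, and then use $f^{\ast}F_Q=F_Q$ to kill the eight remaining coefficients indexed by $\mathbb E\setminus\{(a,b),(a,0),(0,b),(0,0)\}$. The only cosmetic difference is that you spell out the eight-case elimination explicitly, whereas the paper dispatches it in one line; both are fine.
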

\begin{proof}
Let $\ff$ be the defining equation of $C_{a,b}$.
By Lemma \ref{1}, the polynomial $F_{Q_1}$ contains the monomial $X_0^a Y_0^b$, $F_{Q_2}$ contains $X_0^a Y_1^b$, $F_{Q_3}$ contains $X_1^a Y_0^b$, and $F_{Q_4}$ contains the monomial $X_1^a Y_1^b$. Therefore, let
\[
F :=s_{a,b} X_0^a Y_0^b + s_{a,0} X_0^a Y_1^b + s_{0,b} X_1^a Y_0^b + s_{0,0} X_1^aY_1^{b}
\]
where $s_{a,b}, s_{a,0}, s_{0,b},s_{0,0} \in \mathbb{C}^\ast$.
Then, by the equation~\eqref{01}, 
\[
\sum_{i=1}^4F_{Q_i} = F + \sum_{(i,j) \in \mathbb{E} \setminus \{(a,b), (a,0), (0,b), (0,0)\}} s_{i,j} X_0^i X_1^{a-i} Y_0^j Y_1^{b-j},
\]
where $s_{i,j} \in \mathbb{C}$ for $(i,j) \in \mathbb{E} \setminus \{(a,b), (a,0), (0,b), (0,0)\}$.
Since $f$ is an automorphism of $C_{a,b}$, we have
$f^{\ast}\ff=t\ff$ for some $t\in\mathbb C^{\ast}$.
Thus, by the equation~\eqref{02}, $f^{\ast}F=tF$, i.e.
\begin{dmath*}
e_n^ae_m^bs_{a,b}X_0^aY_0^b+e_n^as_{a,0}X_0^aY_1^b+e_m^bs_{0,b}X_1^aY_0^b+s_{0,0}X_1^aY_1^b\\
=t(s_{a,b}X_0^aY_0^b+s_{a,0}X_0^aY_1^b+s_{0,b}X_1^aY_0^b+s_{0,0}X_1^aY_1^b).
\end{dmath*}
By comparing the coefficients of the monomials $X_0^a Y_0^b$, $X_0^a Y_1^b$, $X_1^a Y_0^b$, and $X_1^a Y_1^b$, and using the assumptions that $s_{a,b}, s_{a,0}, s_{0,b},s_{0,0} \in \mathbb{C}^\ast$, we obtain the following equations:
\[e_n^ae_m^b=t,\ \ \ e_n^a=t,\ \ \ e_m^b=t,\ \ \ 1=t.\]
From these equations, we get that $e_n^a=e_m^b=1$. As a result, ${\rm ord}(f)$ divides ${\rm lcm}(a,b)$.
Moreover, since $t=1$, we have $f^*F_Q=F_Q$.
Then $(e_n^ie_m^j-1)s_{i,j}=0$ for $(i,j) \in \mathbb{E} \setminus \{(a,b), (a,0), (0,b), (0,0)\}$.
Since $e_n^a=e_m^b=1$, $n,m\geq 2$, and 
\begin{equation*}
\begin{split}
\mathbb{E} \setminus \{(a,b), (a,0), (0,b), (0,0)\}&=
		\left\{
		\begin{aligned}
			&(a-1,b),\,(a,b-1),\,(a-1,0),\,(a,1),\\
			&(1,b),\,(0,b-1),\,(1,0),\,(0,1)
		\end{aligned}
		\right\},
	\end{split}
\end{equation*}
 we get that
$s_{i,j}=0$ for $(i,j) \in \mathbb{E} \setminus \{(a,b), (a,0), (0,b), (0,0)\}$.
Therefore, $F_Q=s_{a,b} X_0^a Y_0^b + s_{a,0} X_0^a Y_1^b + s_{0,b} X_1^a Y_0^b + s_{0,0} X_1^aY_1^{b}$.
\end{proof}
\begin{thm}\label{4}
Let $C_{a,b}\subset \pp$ be a smooth curve of bidegree $(a,b)$ with an automorphism $f$.
We assume that $|\{Q_i\}_{i=1}^4 \cap C_{a,b}|=0$.
\begin{enumerate}
\item[$(i)$]If $f=[D(e_n,1)]\times [D(e_m,1)]$ where $n,m\geq 2$, then ${\rm ord}(f)$ divides $ab$, and ${\rm Fix}(f)=\emptyset$.
\item[$(ii)$]If $f=s_{(1,2)}\circ ([A]\times [B])$ where $A,B\in{\rm GL}(2,\mathbb C)$ are diagonal matrices such that $AB=BA=D(e_n,1)$ with $n\geq 2$,
then $a=b$, and ${\rm ord}(f)$ divides $2a$, and ${\rm Fix}(f)=\emptyset$.
\end{enumerate}
\end{thm}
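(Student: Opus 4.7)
The plan is to deduce both parts of the theorem almost directly from Proposition \ref{4-}, Lemma \ref{8}, and the observation following Lemma \ref{-1}; no fresh analysis of the defining equation is needed.

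For part (i), Proposition \ref{4-} already gives the much stronger statement that ${\rm ord}(f)$ divides ${\rm lcm}(a,b)$, so the divisibility by $ab$ follows at once from ${\rm lcm}(a,b)\mid ab$. For the fixed-point statement, I would recall the observation immediately after Lemma \ref{-1}: whenever $f=[D(e_n,1)]\times[D(e_m,1)]$ with $n,m\geq 2$, the fixed locus ${\rm Fix}(f)\subset\pp$ is contained in $\{Q_1,Q_2,Q_3,Q_4\}$. Intersecting with $C_{a,b}$ and using the standing hypothesis $\{Q_i\}_{i=1}^4\cap C_{a,b}=\emptyset$ then gives ${\rm Fix}(f)=\emptyset$ on $C_{a,b}$.

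For part (ii), the equality $a=b$ is exactly Lemma \ref{8}(i). To control the order, observe that $f^2=[D(e_n,1)]\times[D(e_n,1)]$ is itself an automorphism of $C_{a,b}$ satisfying the hypothesis of Proposition \ref{4-} (with both diagonal parameters equal to $n$); since $a=b$, that proposition forces $e_n^a=1$, so ${\rm ord}(f^2)\mid a$ and hence ${\rm ord}(f)\mid 2a$.

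For the fixed-point part of (ii) I would reason directly on $\pp$. A point $(P_1,P_2)$ satisfies $f(P_1,P_2)=(P_1,P_2)$ iff $[B]P_2=P_1$ and $[A]P_1=P_2$; combining these gives $[AB]P_2=P_2$, so $P_2$ is fixed by $[D(e_n,1)]$ and therefore lies in $\{[1:0],[0:1]\}$. Because $A$ and $B$ are diagonal, they stabilize $[1:0]$ and $[0:1]$ pointwise, and the relations $[A]P_1=P_2$, $[B]P_2=P_1$ then force $P_1=P_2$. Hence ${\rm Fix}(f)\subset\{Q_1,Q_4\}\subset\{Q_i\}_{i=1}^4$, and the hypothesis again eliminates these from $C_{a,b}$. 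The only step that requires any thought is this last fixed-point calculation (where one must keep careful track of what the swap $s_{(1,2)}$ does to a would-be fixed point); the rest reduces to quoting already-established lemmas.
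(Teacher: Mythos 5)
Your proposal is correct and takes essentially the same route as the paper: part (i) is quoted directly from Proposition \ref{4-} together with the containment of the fixed locus in $\{Q_i\}_{i=1}^4$, and part (ii) uses Lemma \ref{8} for $a=b$ and Proposition \ref{4-} applied to $f^2=[D(e_n,1)]\times[D(e_n,1)]$ for the order. The only cosmetic difference is the final step of (ii), where you compute directly that ${\rm Fix}(f)\subset\{Q_1,Q_4\}$, whereas the paper deduces ${\rm Fix}(f)=\emptyset$ from ${\rm Fix}(f)\subseteq{\rm Fix}(f^2)=\emptyset$; both arguments are valid.
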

\begin{proof}
First, we assume that $f=[D(e_n,1)]\times [D(e_m),1]$ where $n,m\geq 2$.
By Proposition \ref{4-}, ${\rm ord}(f)$ divides ${\rm lcm}(a,b)$.
Thus, ${\rm ord}(f)$ divides $ab$.
Since $Q_i\not\in C_{a,b}$ for $i=1,\ldots,4$, we have ${\rm Fix}(f)=\emptyset$.
Next, we assume that $f=s_{(1,2)}\circ ([A]\times [B])$ where $A,B\in{\rm PGL}(2,\mathbb C)$ such that $A$ and $B$ are diagonal matrices, and $AB=BA=D(e_n,1)$ with $n\geq2$.
By Lemma \ref{8}, it follows that $a=b$.
By Proposition \ref{4-}, ${\rm ord}(f^2)$ divides $a$.
Then ${\rm ord}(f)$ divides $2a$.
Since $f^2=[D(e_n,1)]\times [D(e_n,1)]$ where $n\geq 2$, we have ${\rm Fix}(f^2)=\emptyset$.
Thus, ${\rm Fix}(f)=\emptyset$.
\end{proof}
Next, we consider the case where $|\{Q_i\}_{i=1}^4 \cap C_{a,b}|=2$.
\begin{lem}\label{7}
Let $C\subset \pp$ be a smooth curve with an automorphism $f$
such that $|\{Q_i\}_{i=1}^4 \cap C|=2$.
\begin{enumerate}
\item[$(i)$]We assume that $f=[D(e_n,1)]\times [D(e_m,1)]$ where $n,m\geq2$. 
By replacing the coordinate system if necessary, 
we get that $\{Q_i\}_{i=1}^4 \cap C$ is $\{Q_2,Q_3\}$ or $\{Q_3,Q_4\}$, and $f=[D(e_s^{\epsilon},1)]\times [D(e_t^{\delta},1)]$ where $\{s,t\}=\{n,m\}$ and $\epsilon,\delta\in\{{\pm 1}\}$. 
\item[$(ii)$]We assume that $f=s_{(1,2)}\circ ([A]\times [B])$
where $A$ and $B$ are diagonal matrices such that $AB=BA=D(e_n,1)$ with $n\geq2$. 
Then $\{Q_i\}_{i=1}^4 \cap C$ is $\{Q_1,Q_4\}$ or $\{Q_2,Q_3\}$.
\end{enumerate}
\end{lem}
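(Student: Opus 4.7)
The plan is to treat (i) and (ii) uniformly by analyzing the permutation that $f$ induces on the set $\{Q_1,Q_2,Q_3,Q_4\}$, together with the coordinate changes that preserve the given normal form of $f$.

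I would dispatch (ii) first, since it is the shorter case. By Lemma \ref{8}, $f = \s \circ ([A]\times [B])$ with $A,B$ diagonal matrices. A direct evaluation of $f(x,y) = ([B]y,[A]x)$ on each $Q_i$ shows that $f$ fixes $Q_1$ and $Q_4$ and interchanges $Q_2$ and $Q_3$. Since $f(C)=C$, the set $\{Q_i\}_{i=1}^4 \cap C$ must be $\langle f\rangle$-stable, hence a union of $f$-orbits on $\{Q_1,\ldots,Q_4\}$, whose sizes are $1,1,2$. The only $2$-element unions of such orbits are $\{Q_1,Q_4\}$ and $\{Q_2,Q_3\}$, as claimed.

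For (i), $f = [D(e_n,1)]\times [D(e_m,1)]$ fixes every $Q_i$, so invariance under $f$ alone imposes no restriction on the intersection, and the statement is achieved by a coordinate change. The admissible changes are those that preserve the ``product of diagonals'' form of $f$. Since a nontrivial diagonal element of ${\rm PGL}(2,\mathbb C)$ fixes exactly the pair $\{[1:0],[0:1]\}$ on $\mathbb P^1$, any coordinate change in one factor that preserves the diagonal form must preserve this pair, hence (modulo diagonal scalings, which do not affect the $Q_i$ or $f$) is either the identity or the swap $X_0 \leftrightarrow X_1$. Conjugation by $\s$ is also admissible, since it interchanges the two factors of $f$. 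These generators act on $\{Q_1,\ldots,Q_4\}$ as the permutations $(Q_1Q_3)(Q_2Q_4)$, $(Q_1Q_2)(Q_3Q_4)$, and $(Q_2Q_3)$. I would then enumerate the orbits of $2$-element subsets under the group they generate: the antipodal pairs $\{Q_1,Q_4\}$ and $\{Q_2,Q_3\}$ form one orbit with representative $\{Q_2,Q_3\}$, while the four non-antipodal pairs form a single orbit with representative $\{Q_3,Q_4\}$. To finish, I would track how each generator transforms $f$: conjugation by the swap in a given factor sends $[D(e_k,1)]$ in that factor to $[D(e_k^{-1},1)]$, while conjugation by $\s$ interchanges the two factors, swapping the roles of $n$ and $m$. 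Combining these, $f$ ends up in the stated form $[D(e_s^{\epsilon},1)] \times [D(e_t^{\delta},1)]$ with $\{s,t\} = \{n,m\}$ and $\epsilon,\delta \in \{\pm 1\}$.

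The only real obstacle is bookkeeping: one must confirm that the orbit enumeration in (i) is complete and that every coordinate change keeps $f$ in its normal form, so that the conclusion on the form of $f$ matches the statement. No deep algebra or geometry is involved; once the permutation actions are identified, both parts reduce to routine case analysis.
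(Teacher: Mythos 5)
Your proof is correct and follows essentially the same route as the paper: part (ii) is the identical orbit argument on $\{Q_1,\ldots,Q_4\}$, and part (i) uses the same three coordinate changes (swap within each factor and the factor exchange $\s$), merely repackaged as an orbit enumeration for the dihedral group they generate rather than the paper's explicit case-by-case list. The bookkeeping you flag (orbit representatives $\{Q_2,Q_3\}$ and $\{Q_3,Q_4\}$, and the effect of each generator on $f$, namely $e_k\mapsto e_k^{-1}$ and $n\leftrightarrow m$) checks out against the paper's computations.
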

\begin{proof}
First, we assume that $f=[D(e_n,1)]\times [D(e_m,1)]$ where $n,m\geq2$.
If $\{Q_i\}_{i=1}^4 \cap C=\{Q_1,Q_2\}$ $($resp. $\{Q_1,Q_4\})$, then by exchanging $X_0$ and $X_1$ in the first component of $\pp$, 
we obtain $\{Q_i\}_{i=1}^4 \cap C=\{Q_3,Q_4\}$ $($resp. $\{Q_2,Q_3\})$.
In these cases, 
$f=[D(e_n^{-1},1)]\times[D(e_m,1)]$.

If $\{Q_i\}_{i=1}^4 \cap C=\{Q_2,Q_4\}$, then by exchanging the first and second components of 
$\pp$, we obtain 
$\{Q_i\}_{i=1}^4 \cap C=\{Q_3,Q_4\}$.
In this in case,
$f=[D(e_m,1)]\times[D(e_n,1)]$.

If $\{Q_i\}_{i=1}^4 \cap C=\{Q_1,Q_3\}$, then by exchanging $Y_0$ and $Y_1$ in the second component of 
$\pp$,
followed by exchanging the first and second components of 
$\pp$,
we obtain 
$\{Q_i\}_{i=1}^4 \cap C=\{Q_3,Q_4\}$.
In this case,
$f=[D(e_m^{-1},1)]\times[D(e_n,1)]$.

Next, we assume that $f=s_{(1,2)}\circ ([A]\times [B])$
where $A$ and $B$ are diagonal matrices such that $AB=BA=D(e_n,1)$ with $n\geq2$. 
Since $A$ and $B$ are diagonal matrices,
$[A]\times[B](Q_i)=Q_i$ for $i=1,\ldots, 4$.
Since $\s(Q_1)=Q_1$, $\s(Q_4)=Q_4$, $\s(Q_2)=Q_3$, and because $|\{Q_i\}_{i=1}^4 \cap C|=2$, 
we get $\{Q_i\}_{i=1}^4 \cap C$ is either $\{Q_1,Q_4\}$ or $\{Q_2,Q_3\}$.
\end{proof}
In Lemma \ref{7},
by replacing the coordinate system, 
an automorphism may involve
$e_n^{-1}$, but note that $e_n^{-1}$ remains a primitive $n$-th root of unity.
In subsequent propositions and theorems, when calculating the order of an automorphism, 
it is sufficient that the diagonal entries of the matrix are primitive $n$-th roots of unity.
\begin{pro}\label{8.1}
Let $C_{a,b}\subset \pp$ be a smooth curve of bidegree $(a,b)$ with an automorphism $f=[D(e_n,1)]\times [D(e_m,1)]$ where $n,m\geq2$.
We assume that 
$\{Q_i\}_{i=1}^4 \cap C_{a,b}=\{Q_3,Q_4\}$.
Then one of the following holds:
\begin{enumerate}
\item[$(i)$]$e_n^{a-1}=e_m^b=1$ and $e_n=e_m^l$ for some non-zero integer $l$. In this case, ${\rm ord}(f)$ divides $b$. 
\item[$(ii)$]$e_m=-1$ and $e_n^{2a}=1$.
In this case, ${\rm ord}(f)$ divides $2a$. 
\end{enumerate}
Moreover, if ${\rm ord}(f)$ divides $lm$ where $l\geq2$ and $m:={\rm Max}\{a,b\}$,
then ${\rm ord}(f)=2a$, ${\rm Fix}(f)=\{Q_3,Q_4\}$, and $C_{a,b}/\langle f\rangle \cong\mathbb P^1$.
\end{pro}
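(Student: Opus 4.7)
The plan is to mirror the approach of Proposition~\ref{4-} and Lemma~\ref{3}: write down the defining equation $F$ using Lemma~\ref{1}, impose $f^{\ast}F = tF$ for some $t \in \mathbb{C}^{\ast}$, and extract the conditions on $e_n$, $e_m$, $t$ by comparing coefficients of a handful of distinguished monomials, finally distinguishing cases according to which of the ``smoothness'' coefficients at $Q_3$ and $Q_4$ survive.

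First, by Lemma~\ref{1} applied to the hypothesis $\{Q_i\}_{i=1}^4 \cap C_{a,b} = \{Q_3, Q_4\}$, we have $s_{a,b}, s_{a,0} \in \mathbb{C}^{\ast}$, $s_{0,b} = s_{0,0} = 0$, together with the smoothness constraints $(s_{1,b}, s_{0,b-1}) \neq (0,0)$ and $(s_{1,0}, s_{0,1}) \neq (0,0)$. Writing $f^{\ast}F = tF$ and comparing the coefficients of $X_0^aY_0^b$ and $X_0^aY_1^b$ immediately yields $t = e_n^a e_m^b = e_n^a$, hence $e_m^b = 1$. The four remaining conditional equations read: $s_{1,b}\neq 0 \Rightarrow t = e_n$, $\ s_{0,b-1}\neq 0 \Rightarrow t = e_m^{b-1} = e_m^{-1}$, $\ s_{1,0}\neq 0 \Rightarrow t = e_n$, and $s_{0,1}\neq 0 \Rightarrow t = e_m$.

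Next I split into cases. If $s_{0,b-1}$ and $s_{0,1}$ are both non-zero, then $e_m = e_m^{-1}$ forces $e_m = -1$ (since $m \geq 2$), and $t = e_n^a = -1$ yields $e_n^{2a} = 1$, which is conclusion (ii). Otherwise at least one of $s_{1,b}, s_{1,0}$ is non-zero, giving $t = e_n$ and $e_n^{a-1} = 1$. The relation $e_n = e_m^l$ needed for (i) then comes from irreducibility of $F$: since $X_0 \nmid F$, some $s_{0,j}$ with $1 \leq j \leq b-1$ is non-zero; the corresponding coefficient equation gives $t = e_m^j$, so $e_n = e_m^j$ with $l = j \neq 0$. (When the surviving coefficient is $s_{0,1}$ or $s_{0,b-1}$ one reads off $l = \pm 1$ directly; otherwise one uses some $s_{0,j}$ with $2 \leq j \leq b-2$.) The divisibility ${\rm ord}(f)\mid b$ then follows from $e_n = e_m^l$ together with $e_m^b = 1$.

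For the ``moreover'' assertion, in case (i) one has ${\rm ord}(f)\mid b \leq \max\{a,b\}$, which is incompatible with the divisibility hypothesis (read as forcing ${\rm ord}(f)$ to be a multiple of $\max\{a,b\}$ with multiplier at least $2$), so we must be in case (ii); then ${\rm ord}(f)\mid 2a$ together with the hypothesis forces ${\rm ord}(f)=2a$ and $\max\{a,b\}=a$. Since $e_n, e_m\neq 1$, the observation following Lemma~\ref{-1} together with direct computation gives ${\rm Fix}(f) = \{Q_i\}_{i=1}^4 \cap C_{a,b} = \{Q_3, Q_4\}$. Finally $f^2 = [D(e_n^2, 1)] \times [I_2]$ with ${\rm ord}(e_n^2) = a$ (since ${\rm ord}(f) = 2a$ and ${\rm ord}(e_m) = 2$), so Lemma~\ref{th} applies with $k=2$ to yield $C_{a,b}/\langle f\rangle \cong \mathbb{P}^1$. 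The main obstacle is the case analysis used to establish (i): one must guarantee that the relation $e_n = e_m^l$ with $l \neq 0$ can be extracted in every sub-case, which is precisely where the irreducibility of the defining equation plays an essential role through the forced existence of a non-trivial $s_{0,j}$.
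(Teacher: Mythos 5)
Your proof is correct and follows essentially the same route as the paper: the same coefficient comparison of $f^{\ast}F=tF$ on the monomials singled out by Lemma \ref{1}, the same use of irreducibility (no $s_{0,j}$ surviving would force $X_0\mid F$) to extract $e_n=e_m^l$, and the same application of Lemma \ref{th} to $f^2=[D(e_n^2,1)]\times[I_2]$ for the quotient. Your reorganization of the case split (by which smoothness coefficients $s_{0,b-1},s_{0,1}$ survive, rather than by which factor of $(e_n-t)(e_m-t)=0$ vanishes) is equivalent to, and arguably cleaner than, the paper's three cases, and your explicit reading of the ambiguous divisibility hypothesis in the ``moreover'' clause matches how the paper actually uses it.
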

\begin{proof}
Let $\ff$ be the defining equation of $C_{a,b}$.
By Lemma \ref{1}, the polynomial $F_{Q_1}$ contains the monomial $X_0^a Y_0^b$, and $F_{Q_2}$ contains $X_0^a Y_1^b$, whereas $F_{Q_3}$ contains $X_1^a Y_0^b$, and $F_{Q_4}$ does not contain the monomial $X_1^a Y_1^b$. Therefore, let
\begin{equation*}\label{lem2.1}
\begin{split}
F:=&s_{a,b}X_0^aY_0^b+s_{a,0}X_0^aY_1^b+s_{1,b}X_0X_1^{a-1}Y_0^b+s_{0,b-1}X_1^aY_0^{b-1}Y_1\\
	&+s_{1,0}X_0X_1^{a-1}Y_1^b+s_{0,1}X_1^aY_0Y_1^{b-1}
\end{split}
\end{equation*}
where $s_{a,b},s_{a,0}\in\mathbb C^{\ast}$ and $s_{1,b},s_{0,b-1},s_{1,0},s_{0,1}\in\mathbb C$ with
$(s_{1,b},s_{0,b-1})\not=(0,0)$ and $(s_{1,0},s_{0,1})\not=(0,0)$.
Note that $\mathbb{E} \setminus\{(a,b),\,(a,0),\,(0,b),\,(1,b),\,(0,b-1),\,(0,0),$
$ (1,0),\,(0,1)\}=\{(a-1,b),\,(a,b-1),\,(a-1,0),\,(a,1)\}$.
By the equation~\eqref{01}, 
\[
\sum_{i=1}^4F_{Q_i} = F + \sum_{(i,j) \in \{(a-1,b),\,(a,b-1),\,(a-1,0),\,(a,1)\}} s_{i,j} X_0^i X_1^{a-i} Y_0^j Y_1^{b-j},
\]
where $s_{i,j} \in \mathbb{C}$ for $(i,j) \in \{(a-1,b),\,(a,b-1),\,(a-1,0),\,(a,1)\}$.
Since $f$ is an automorphism of $C_{a,b}$, we have
$f^{\ast}\ff=t\ff$ for some $t\in\mathbb C^{\ast}$.
Thus, by the equation~\eqref{02}, $f^{\ast}F=tF$, i.e.
\begin{dmath*}
e_n^ae_m^bs_{a,b}X_0^aY_0^b+e_n^as_{a,0}X_0^aY_1^b+e_ne_m^bs_{1,b}X_0X_1^{a-1}Y_0^b+e_m^{b-1}s_{0,b-1}X_1^aY_0^{b-1}Y_1 +e_ns_{1,0}X_0X_1^{a-1}Y_1^b+e_ms_{0,1}X_1^aY_0Y_1^{b-1}\\
=t(s_{a,b}X_0^aY_0^b+s_{a,0}X_0^aY_1^b+s_{1,b}X_0X_1^{a-1}Y_0^b+s_{0,b-1}X_1^aY_0^{b-1}Y_1+s_{1,b}X_0X_1^{a-1}Y_0^b+s_{0,b-1}X_1^aY_0^{b-1}Y_1+s_{1,0}X_0X_1^{a-1}Y_1^b+s_{0,1}X_1^aY_0Y_1^{b-1}).
\end{dmath*}
By comparing the coefficients of the monomials $X_0^a Y_0^b$, $X_0^a Y_1^b$, and $X_0X_1^{a-1} Y_0^b$ and those of $X_1^a Y_0^{b-1}Y_1$, $X_0X_1^{a-1}Y_1^b$, and $X_1^a Y_0 Y_1^{b-1}$, and using the assumptions that $s_{a,b},s_{a,0}\in\mathbb C^{\ast}$, $(s_{1,b},s_{0,b-1})\not=(0,0)$, and $(s_{1,0},s_{0,1})\not=(0,0)$, we obtain the following equations:
\[e_n^ae_m^b=t,\ \ \ \ e_n^a=t,\ \ \ \ (e_ne_m^b-t)(e_m^{b-1}-t)=0,\ \ \ \ (e_n-t)(e_m-t)=0.\]
By the first and second equations, we obtain $e_m^b=1$.
The above equations are equal to the following equations:
\begin{equation}\label{eq2}
e_n^a=t,\ \ \ \ e_m^b=1,\ \ \  (e_n-t)(e_m^{b-1}-t)=0,\ \ \ \ (e_n-t)(e_m-t)=0.
\end{equation}
By the forth equation in $(\ref{eq2})$, either $e_n=t$ or $e_m=t$.

\textbf{Case 1:} We assume that $e_n=e_m=t$. 
By the first and second equations in $(\ref{eq2})$, we obtain $e_n^{a-1}=e_n^b=1$.
Thus, ${\rm ord}(f)$ divides ${\rm gcd}(a-1,b)$, and hence ${\rm ord}(f)$ divides $b$.

\textbf{Case 2:} We assume that $e_n=t$ and $e_m\not=t$.
By the equation $e_ms_{0,1}X_1^aY_0Y_1^{b-1}=ts_{0,1}X_1^aY_0Y_1^{b-1}$, we obtain $s_{0,1}=0$.
By the first equation in $(\ref{eq2})$, we have $e_n^{a-1}=1$.
From the third equation in $(\ref{eq2})$, we consider two cases separately: $e_m^{b-1}=e_n$ and $e_m^{b-1}\neq e_n$.
If $e_m^{b-1}=e_n$, then by the second equation in $(\ref{eq2})$, i.e. $e_m^b=1$, we get ${\rm ord}(f)$ divides $b$.
If $e_m^{b-1}\not=e_n$,
then by the equation $e_m^{b-1}s_{0,b-1}X_1^aY_0^{b-1}Y_1=ts_{0,b-1}X_1^aY_0^{b-1}Y_1$, we obtain $s_{0,b-1}=0$.
Since $s_{0,1}=s_{0,b-1}=0$, we get that $\sum_{i=1}F_{Q_i}$ is a multiple of $X_0$.
Since $C_{a,b}$ is smooth, 
it is in particular integral.
Thus, the defining equation $\ff$ of $C_{a,b}$ is irreducible.
Therefore, $\ff$ contains a term of the form $X_1^aY_0^lY_1^{b-l}$ for some $l$. 
Since $t=e_n$, $e_n=e_m^l$.
Thus, ${\rm ord}(f)$ divides $b$.

\textbf{Case 3:} We assume that $e_n\not=t$ and $e_m=t$.
By the third equation in $(\ref{eq2})$, we have $e_m^{b-2}=1$.
By the second equation in $(\ref{eq2})$, we obtain $e_m^b=1$.
Since $e_m^{b-2}=e_m^b=1$, it follows that $e_m^2=1$.
Since $m\geq 2$, we conclude that $m=2$, i.e. $e_m=-1$.
By the first equation in $(\ref{eq2})$, we have $e_n^a=-1$.
Thus, ${\rm ord}(f)$ divides $2a$.
From the above cases, parts $(i)$ and $(ii)$ of this theorem are shown.
		
Finally, we assume that ${\rm ord}(f)$ divides $kl$ where $k\geq2$ and $l:={\rm Max}\{a,b\}$.
By parts $(i)$ and $(ii)$ of this theorem,
we obtain ${\rm ord}(f)=2a$ and $f=[D(e_{2a},1)]\times[D(e_2,1)]$.
Then ${\rm Fix}(f)=\{Q_3,Q_4\}$.
Since $f^2=[D(e_{a},1)]\times[I_2]$, by Lemma \ref{th}, we have $C_{a,b}/\langle f\rangle \cong\mathbb P^1$.
\end{proof}
\begin{pro}\label{10}
Let $C_{a,b}\subset \pp$ be a smooth curve of bidegree $(a,b)$ with an automorphism $f$ such that $f=[D(e_n,1)]\times [D(e_m,1)]$
where $n,m\geq2$.
We assume that $\{Q_i\}_{i=1}^4 \cap C_{a,b}=\{Q_2,Q_3\}$.
Then one of the following holds:
\begin{enumerate}
\item[$(i)$]Either $e_n=e_m$ or $e_n=e_m^{-1}$, and either $e_n^{a-1}=1$ or $e_n^{b-1}=1$.
In this case, ${\rm ord}(f)$ divides $a-1$ or $b-1$.
\item[$(ii)$]$e_n\not=e_m$, $e_n\not=e_m^{-1}$, $e_n=e_m^{-b}$, and $e_m^{(a-1)b}=1$. 
In this case, ${\rm ord}(f)$ divides $(a-1)b$, and
\begin{equation*}
	\begin{split}
\sum_{i=1}^4F_{Q_i}=&s_{a,b}X_0^aY_0^b+s_{a-1,0}X_0^{a-1}X_1Y_1^b\\
&+s_{1,b}X_0X_1^{a-1}Y_0^b+s_{0,0}X_1^aY_1^b
\end{split}
\end{equation*}
where $s_{a,b},s_{a-1,0},s_{1,b},s_{0,0}\in\mathbb C^{\ast}$.
\item[$(iii)$]$e_n\not=e_m$, $e_n\not=e_m$, $e_m=e_n^{-a}$, and $e_n^{a(b-1)}=1$.
In this case, ${\rm ord}(f)$ divides $a(b-1)$, and
\begin{equation*}
\begin{split}
\sum_{i=1}^4F_{Q_i}=&s_{a,b}X_0^aY_0^b+s_{0,b-1}X_1^aY_0^{b-1}Y_1\\
&+s_{a,1}X_0^aY_0Y_1^{b-1}+s_{0,0}X_1^aY_1^b
\end{split}
\end{equation*}
where $s_{a,b},s_{0,b-1},s_{a,1},s_{0,0}\in\mathbb C^{\ast}$.
\end{enumerate}
Moreover, if ${\rm ord}(f)$ divides $kl$ where $k\geq2$ and $l:={\rm Max}\{a,b\}$,
then ${\rm Fix}(f)=\{Q_2,Q_3\}$ and $C_{a,b}/\langle f\rangle \cong\mathbb P^1$.
\end{pro}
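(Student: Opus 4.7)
The plan mirrors the strategy of Propositions \ref{4-} and \ref{8.1}. First, by Lemma \ref{1}, the assumption $\{Q_i\}_{i=1}^4 \cap C_{a,b} = \{Q_2, Q_3\}$ yields $s_{a,b}, s_{0,0} \in \mathbb{C}^{\ast}$, $s_{a,0} = s_{0,b} = 0$, and the pairwise nonvanishing constraints $(s_{a-1,0}, s_{a,1}) \neq (0,0)$ and $(s_{1,b}, s_{0,b-1}) \neq (0,0)$ from $F_{Q_2}, F_{Q_3} \neq 0$. Accordingly, I set
\begin{align*}
F := s_{a,b} X_0^a Y_0^b &+ s_{a-1,0} X_0^{a-1} X_1 Y_1^b + s_{a,1} X_0^a Y_0 Y_1^{b-1} \\
&+ s_{1,b} X_0 X_1^{a-1} Y_0^b + s_{0,b-1} X_1^a Y_0^{b-1} Y_1 + s_{0,0} X_1^a Y_1^b.
\end{align*}
Combining \eqref{01}, \eqref{02} with $f^{\ast} F = tF$ for some $t \in \mathbb{C}^{\ast}$ and comparing coefficients of the two guaranteed-nonzero monomials $X_0^a Y_0^b$ and $X_1^a Y_1^b$ yields $t = 1$ and $e_n^a e_m^b = 1$. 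The other four monomials give the conditional identities $(e_n^{a-1}-1) s_{a-1,0} = 0$, $(e_n^a e_m - 1) s_{a,1} = 0$, $(e_n e_m^b - 1) s_{1,b} = 0$, and $(e_m^{b-1}-1) s_{0,b-1} = 0$.

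The proof then proceeds by case analysis on the nonvanishing pattern of the four variable coefficients. The two ``pure'' configurations $\{s_{a-1,0}, s_{1,b} \neq 0,\ s_{a,1} = s_{0,b-1} = 0\}$ and $\{s_{a,1}, s_{0,b-1} \neq 0,\ s_{a-1,0} = s_{1,b} = 0\}$ correspond to cases (ii) and (iii) respectively: the polynomial form of $\sum_i F_{Q_i}$ is read directly from the nonvanishing pattern, and substituting $e_n = e_m^{-b}$ (resp.\ $e_m = e_n^{-a}$) into $e_n^a e_m^b = 1$ yields $e_m^{(a-1)b} = 1$ (resp.\ $e_n^{a(b-1)} = 1$). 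Every other configuration collapses into case (i): if both $s_{a-1,0}$ and $s_{a,1}$ are nonzero, then $e_n^{a-1} = 1$ and $e_n^a e_m = 1$ combine to give $e_m = e_n^{-1}$; if both $s_{1,b}$ and $s_{0,b-1}$ are nonzero, similarly $e_n = e_m^{-1}$; the cross configuration $\{s_{a-1,0}, s_{0,b-1} \neq 0,\ s_{a,1} = s_{1,b} = 0\}$ uses $e_n^{a-1} = e_m^{b-1} = 1$ together with $e_n^a e_m^b = 1$ to force $e_n e_m = 1$; and the subtler cross configuration $\{s_{a,1}, s_{1,b} \neq 0,\ s_{a-1,0} = s_{0,b-1} = 0\}$ uses $e_m = e_n^{-a}$ and $e_n = e_m^{-b}$ to obtain $e_n^{ab-1} = 1$ and, via $e_n^a e_m^b = 1$, also $e_n^{a(b-1)} = 1$; a $\gcd$ manipulation then gives $e_n^{\gcd(a-1, b-1)} = 1$, whence $e_n^{a-1} = 1$ and consequently $e_m = e_n^{-a} = e_n^{-1}$. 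Configurations with three or four of the variable coefficients nonzero always contain one of the pair-nonzero cases above, so they also fall into case (i).

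For the final ``moreover'' clause, the hypothesis $\mathrm{ord}(f) \mid kl$ with $k \geq 2$ and $l := \max\{a,b\}$ excludes case (i), whose orders divide $a - 1$ or $b - 1$ (both strictly less than $l$) and therefore cannot be of the form $kl$ with $k \geq 2$ in the paper's convention. Since $f = [D(e_n,1)] \times [D(e_m,1)]$ with $n, m \geq 2$ has its fixed locus on $\pp$ contained in $\{Q_1, \dots, Q_4\}$, intersecting with $C_{a,b}$ gives $\mathrm{Fix}(f) = \{Q_2, Q_3\}$ immediately. To establish $C_{a,b}/\langle f\rangle \cong \mathbb{P}^1$: in case (iii), using $e_m = e_n^{-a}$, I choose the power $f^j$ with $e_m^j = 1$ and verify that $e_n^j$ becomes a primitive $a$-th root of unity, placing $f^j$ in the form required by Lemma \ref{th}. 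Case (ii) is symmetric, invoking the analog of Lemma \ref{th} with ${p_1}_{|C_{a,b}}$ (of degree $b$) in place of ${p_2}_{|C_{a,b}}$.

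The principal obstacle is clean bookkeeping in the case analysis: the sixteen on/off patterns for the four variable coefficients reduce, modulo the two nonemptiness constraints, to nine configurations, and each must be traced into case (i), (ii), or (iii). The most delicate is the cross configuration $\{s_{a,1}, s_{1,b} \neq 0\}$, which requires combining three exponent relations with a $\gcd$ computation to conclude that it too lands in case (i). A secondary technical point in the ``moreover'' step is the explicit construction of the appropriate power $f^j$ needed to invoke Lemma \ref{th} (or its symmetric analog) in cases (ii) and (iii), which must be argued using the numerical constraint that $l = \max\{a,b\}$ divides $\mathrm{ord}(f)$.
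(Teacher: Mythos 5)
Your proposal follows essentially the same route as the paper: the same six-term polynomial $F$, the same coefficient comparison yielding $t=1$, $e_n^ae_m^b=1$ and the four conditional identities, an equivalent case analysis (yours is organized by the vanishing pattern of $s_{a-1,0},s_{a,1},s_{1,b},s_{0,b-1}$, the paper's by which factor of each product vanishes, but the two enumerations cover the same ground and your $\gcd$ treatment of the cross configuration $s_{a,1},s_{1,b}\neq 0$ reaches the same conclusion the paper gets more directly), and the same use of Lemma \ref{th} and its $p_1$-analogue for the quotient claim. One step is asserted rather than proved: you say the displayed form of $\sum_{i=1}^4F_{Q_i}$ in cases $(ii)$ and $(iii)$ ``is read directly from the nonvanishing pattern,'' but by equation \eqref{01} that sum also contains the four monomials indexed by $(a-1,b)$, $(a,b-1)$, $(1,0)$, $(0,1)$, which are not among the six terms of your $F$. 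To obtain the stated four-term expression you must additionally show $s_{a-1,b}=s_{a,b-1}=s_{1,0}=s_{0,1}=0$; this follows by applying $f^{\ast}$-invariance (with $t=1$) to those monomials and using the exponent relations of the case at hand (e.g.\ in case $(ii)$, $e_n^{a-1}e_m^b=e_n^{-1}\neq 1$, $e_n^ae_m^{b-1}=e_m^{-1}\neq 1$, $e_n\neq1$, $e_m\neq1$), exactly as the paper does. Since these explicit forms are part of the proposition's conclusion and are reused in Theorem \ref{pmain2}, this step should be written out.
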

\begin{proof}
Let $\ff$ be the defining equation of $C_{a,b}$.
By Lemma \ref{1}, the polynomial $F_{Q_1}$ contains the monomial $X_0^a Y_0^b$, and
$F_{Q_4}$ does contains the monomial $X_1^a Y_1^b$, whereas 
$F_{Q_2}$ does not contain $X_0^a Y_1^b$, and $F_{Q_3}$ does not contain $X_1^a Y_0^b$.
Therefore, let
\begin{equation*}\label{lem2.2}
\begin{split}
F:=&s_{a,b}X_0^aY_0^b+s_{a-1,0}X_0^{a-1}X_1Y_1^b+s_{a,1}X_0^aY_0Y_1^{b-1}\\
&+s_{1,b}X_0X_1^{a-1}Y_0^b+s_{0,b-1}X_1^aY_0^{b-1}Y_1+s_{0,0}X_1^aY_1^b
\end{split}
\end{equation*}
where $s_{a,b},s_{0,0}\in\mathbb C^{\ast}$ and $s_{a-1,0},s_{a,1},s_{1,b},s_{0,b-1}\in\mathbb C$ with
$(s_{a-1,0},s_{a,1})\not=(0,0)$ and $(s_{1,b},s_{0,b-1})\not=(0,0)$.
Note that $\mathbb{E} \setminus\{(a,b),\,(a,0),\,(a-1,0)\,(a,1),\,(0,b),\,(1,b),\,(0,b-1),\,(0,0)\}=\{(a-1,b),\,(a,b-1),\,(1,0),\,(0,1)\}$.
By the equation~\eqref{01}, 
\[
\sum_{i=1}^4F_{Q_i}  = F + \sum_{(i,j) \in \{(a-1,b),\,(a,b-1),\,(1,0),\,(0,1)\}} s_{i,j} X_0^i X_1^{a-i} Y_0^j Y_1^{b-j},
\]
where $s_{i,j} \in \mathbb{C}$ for $(i,j) \in \{(a-1,b),\,(a,b-1),\,(1,0),\,(0,1)\}$.
Since $f$ is an automorphism of $C_{a,b}$, it follows that
$f^{\ast}\ff=t\ff$ for some $t\in\mathbb C^{\ast}$.
By the equation~\eqref{02}, $f^{\ast}F=tF$, i.e.
\begin{dmath*}
e_n^ae_m^bs_{a,b}X_0^aY_0^b+e_n^{a-1}s_{a-1,0}X_0^{a-1}X_1Y_1^b+e^a_ne_ms_{a,1}X_0^aY_0Y_1^{b-1}+e_ne_m^bs_{1,b}X_0X_1^{a-1}Y_0^b+e_m^{b-1}s_{0,b-1}X_1^aY_0^{b-1}Y_1+s_{0,0}X_1^aY_1^b\\
=t(s_{a,b}X_0^aY_0^b+s_{a-1,0}X_0^{a-1}X_1Y_1^b+s_{a,1}X_0^aY_0Y_1^{b-1}+s_{1,b}X_0X_1^{a-1}Y_0^b+s_{0,b-1}X_1^aY_0^{b-1}Y_1+s_{0,0}X_1^aY_1^b)
\end{dmath*}
By comparing the coefficients of the monomials $X_0^aY_0^b$, $X_0^{a-1}X_1Y_1^b$, $X_0^aY_0Y_1^{b-1}$, $X_0X_1^{a-1}Y_0^b$, $X_1^aY_0^{b-1}Y_1$, and $X_1^aY_1^b$, and using the assumptions that $s_{a,b},s_{0,0}\in\mathbb C^{\ast}$,
$(s_{a-1,0},s_{a,1})\not=(0,0)$, and $(s_{1,b},s_{0,b-1})\not=(0,0)$,
we have $t=1$ and the following equations:
\begin{equation}\label{eq2.5}
e_n^ae_m^b=1,\ \ \ \ (e_n^{a-1}-1)(e_n^ae_m-1)=0,\ \ \ \ (e_ne_m^b-1)(e_m^{b-1}-1)=0.
\end{equation}	

\textbf{Case 1:}　We assume that $e_n=e_m$. 
The equations in $(\ref{eq2.5})$ are equal to the following equations:
\[	e_n^{a+b}=1,\ \ \ \ \ (e_n^{a-1}-1)(e_n^{a+1}-1)=0,\ \ \ \ \ 
(e_n^{b+1}-1)(e_n^{b-1}-1)=0.\]
By \ta second equation, $e_n^{a-1}=1$ or $e_n^{a+1}=1$.
If $e_n^{a-1}=1$, then ${\rm ord}(f)$ divides $a-1$.
If $e_n^{a+1}=1$, then by \ta first equation, we get that $e_n^{b-1}=1$.
Since $e_n^{a+1}=e_n^{b-1}=1$, ${\rm ord}(f)$ divides ${\rm gcd}(a+1,b-1)$. 
Thus, ${\rm ord}(f)$ divides $b-1$.			
			
Now, we assume that $e_n\not=e_m$.
By the second equation in $(\ref{eq2.5})$,
either $e_n^{a-1}=1$ or $e_n^ae_m=1$.
By the third equation in $(\ref{eq2.5})$,
either $e_ne_m^b=1$ or $e_m^{b-1}=1$.

\textbf{Case 2:}　We assume that $e_n\not=e_m$, and $e_n^{a-1}=e_n^ae_m=1$.
In this case, we have $e_ne_m=1$, i.e. $e_n=e_m^{-1}$.
Since $e_n^{a-1}=1$, we obtain ${\rm ord}(f)$ divides $a-1$.

By the symmetry of $a,n$ and $b,m$ in the equations $(\ref{eq2.5})$,
if $e_ne_m^b=e_m^{b-1}=1$, then $e_n=e_m^{-1}$ and ${\rm ord}(f)$ divides $a-1$.

Therefore, we assume that $e_n\not=e_m$, only one of $e_n^{a-1}=1$ or $e_n^ae_m=1$ holds, and only one of $e_ne_m^b=1$ or $e_m^{b-1}=1$ holds.

\textbf{Case 3:}
We assume that $e_n\not=e_m$,
$e_n^{a-1}=1$, and $e_n^ae_m\not=1$.
By the equation $e_n^ae_ms_{a,1}X_0^aY_0Y_1^{b-1}=s_{a,1}X_0^aY_0Y_1^{b-1}$,
we obtain $s_{a,1}=0$.
By the first equation in $(\ref{eq2.5})$, 
we get
$e_ne_m^b=1$, i.e. $e_n=e_m^{-b}$.
Since $e_n^{a-1}=1$, it follows that $e_m^{(a-1)b}=1$.
Thus, ${\rm ord}(f)$ divides $(a-1)b$.
Additionally,
by the third equation in $(\ref{eq2.5})$ and $e_ne_m^b=1$,
we have $e_m^{b-1}\not=1$.
By the equation $e_m^{b-1}s_{0,b-1}X_1^aY_0^{b-1}Y_1=s_{0,b-1}X_1^aY_0^{b-1}Y_1$,
we obtain $s_{0,b-1}=0$.
Since $t=1$, we have $f^*\left(\sum_{i=1}^4F_{Q_i}\right)=\sum_{i=1}^4F_{Q_i}$.
Then $(e_n^ie_m^j-1)s_{i,j}=0$ for $(i,j) \in \{(a-1,b),\,(a,b-1),\,(1,0),\,(0,1)\}$.
Since $e_n^{a-1}=1$, $e_n=e_m^{-b}$, and $n,m\geq2$, we get that $s_{i,j}=0$ for $(i,j) \in \{(a-1,b),\,(a,b-1),\,(1,0),\,(0,1)\}$.
Thus, $\sum_{i=1}^4F_{Q_i}=s_{a,b}X_0^aY_0^b+s_{a-1,0}X_0^{a-1}X_1Y_1^b+s_{1,b}X_0X_1^{a-1}Y_0^b+s_{0,0}X_1^aY_1^b$.
In addition,
if $e_n=e_m^{-1}$, then by $e_ne_m^b=1$, we obtain $e_m^{-1}=e_n$.
This contradicts that $e_m^{-1}\not=e_n$.

\textbf{Case 4:}
We assume that $e_n\not=e_m$,
$e_n^{a-1}\not=1$, and $e_n^ae_m=1$.
By the equation $e_n^{a-1}s_{a-1,0}X_0^{a-1}X_1Y_0^b=s_{a-1,0}X_0^{a-1}X_1Y_0^b$,
we obtain $s_{a-1,0}=0$.
By the first equation in $(\ref{eq2.5})$, 
		$e_m^{b-1}=1$.
Since $e_n^ae_m=1$, i.e. $e_m=e_n^{-a}$ and $e_m^{b-1}=1$,
we have $e_n^{a(b-1)}=1$.
Thus, ${\rm ord}(f)$ divides $a(b-1)$.
Additionally,
	by the third equation in $(\ref{eq2.5})$ and  $e_m^{b-1}=1$,
we obtain $e_ne_m^b\not=1$.
By the equation $e_ne_m^bs_{1,b}X_0X_1^{a-1}Y_0^b=s_{1,b}X_0X_1^{a-1}Y_0^b$,
we obtain $s_{1,b}=0$.
Since $t=1$, we have $f^*\left(\sum_{i=1}^4F_{Q_i}\right)=\sum_{i=1}^4F_{Q_i}$.
Then $(e_n^ie_m^j-1)s_{i,j}=0$ for $(i,j) \in \{(a-1,b),\,(a,b-1),\,(1,0),\,(0,1)\}$.
Since $e_m^{b-1}=1$, $e_n^ae_m=1$, and $n,m\geq2$, we get that $s_{i,j}=0$ for $(i,j) \in \{(a-1,b),\,(a,b-1),\,(1,0),\,(0,1)\}$.
Thus, $\sum_{i=1}^4F_{Q_i}=s_{a,b}X_0^aY_0^b+s_{0,1}X_1^aY_0^{b-1}Y_1+s_{a,1}X_0^aY_0Y_1^{b-1}+s_{0,0}X_1^aY_1^b$.
In addition,
If $e_n=e_m^{-1}$, then by $e_n^ae_m=1$, we obtain $e_n^{a-1}=1$.
This contradicts that $e_n^{a-1}\not=1$.
From the above cases, parts $(i)$, $(ii)$, and $(iii)$ of this theorem are shown.

Finally, 
we assume that ${\rm ord}(f)$ divides $kl$ where $k\geq2$ and $l:={\rm Max}\{a,b\}$.
When $a={\rm Max}\{a,b\}$, $f$ corresponds to part $(iii)$ of this theorem, i.e. $f=[D(e_{ka},1)]\times [D(e_{ka}^{-a},1)]$.
Then $f^k=[D(e_{ak}^k,1)]\times[I_2]$.
Since $e_{ak}^k$ is a primitive $a$-th root of unity, and by Lemma \ref{th}, it follows that $C_{a,b}/\langle f\rangle \cong\mathbb P^1$.
Similarly, if $b={\rm Max}\{a,b\}$,
then $f$ corresponds to part $(ii)$ of this theorem, i.e. $f=[D(e_{kb}^{-b},1)]\times [D(e_{kb},1)]$.
Then $f^k=[I_2]\times[D(e_{kb}^k,1)]$.
Since $e_{bk}^k$ is a primitive $b$-th root of unity, and by Lemma \ref{th}, we have $C_{a,b}/\langle f\rangle \cong\mathbb P^1$.
\end{proof}
\begin{thm}\label{11}
Let $C_{a,b}\subset \pp$ be a smooth curve of bidegree $(a,b)$ with an automorphism $f$.
We assume that $|\{Q_i\}_{i=1}^4\cap C_{a,b}|=2$.
\begin{enumerate}
\item[$(i)$]If $f=[D(e_n,1)]\times [D(e_m,1)]$ where $n,m\geq 2$, then $|{\rm Fix}(f)|>0$, and ${\rm ord}(f)$ divides either $2a$, $2b$, $(a-1)b$, or $a(b-1)$.
\item[$(ii)$]If $f=s_{(1,2)}\circ ([A]\times [B])$ where $A$ and $B$ are diagonal matrices such that $AB=BA=D(e_n,1)$ with $n\geq2$,
then $a=b$, and ${\rm ord}(f)$ divides either $4$ or $2(a-1)$.
\item[$(iii)$]If ${\rm ord}(f)$ divides $lm$ where $l\geq2$ and $m:={\rm Max}\{a,b\}$,
then $|{\rm Fix}(f)|>0$ and $C_{a,b}/\langle f\rangle \cong\mathbb P^1$.
\end{enumerate}
\end{thm}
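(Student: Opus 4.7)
The plan is to treat the three parts in order, using Lemmas \ref{7} and \ref{8} to normalize the automorphism and then invoking Propositions \ref{8.1} and \ref{10} to read off the order constraints and quotient-space information.

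For (i), Lemma \ref{7}(i) reduces, up to coordinate change in each $\mathbb{P}^1$ factor, to the two canonical configurations $\{Q_3,Q_4\}$ and $\{Q_2,Q_3\}$ for $\{Q_i\}_{i=1}^4\cap C_{a,b}$. Proposition \ref{8.1} handles the first and yields $\mathrm{ord}(f)\mid b$ or $\mathrm{ord}(f)\mid 2a$; Proposition \ref{10} handles the second and yields divisibility of $\mathrm{ord}(f)$ by one of $a-1$, $b-1$, $(a-1)b$, or $a(b-1)$. Since the Lemma \ref{7}(i) coordinate swap may interchange the roles of the two $\mathbb{P}^1$ factors, the combined list compresses to $\{2a,2b,(a-1)b,a(b-1)\}$. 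The bound $|\mathrm{Fix}(f)|>0$ is immediate: the diagonal product $f$ fixes every $Q_i$ in $\pp$, and two of these lie on $C_{a,b}$ by hypothesis.

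For (ii), Lemma \ref{8}(ii) forces $a=b$ and normalizes $f^2=[D(e_n,1)]\times [D(e_n,1)]$, while Lemma \ref{7}(ii) restricts the intersection to $\{Q_1,Q_4\}$ or $\{Q_2,Q_3\}$. I would then apply Proposition \ref{10} to the type-(i) automorphism $f^2$ in both subcases. In the $\{Q_2,Q_3\}$ subcase the two diagonal roots coincide, placing $f^2$ in Case 1; with $a=b$ the identity $e_n^{2a}=e_n^{a+1}\cdot e_n^{a-1}$ forces the $e_n^{a+1}=1$ branch to collapse into $e_n^{a-1}=1$, so $\mathrm{ord}(f^2)\mid a-1$. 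In the $\{Q_1,Q_4\}$ subcase, the Lemma \ref{7}(i) swap $X_0\leftrightarrow X_1$ brings us to $\{Q_2,Q_3\}$ with $f^2$ twisted to $[D(e_n^{-1},1)]\times [D(e_n,1)]$, which is Case 2 of Proposition \ref{10} and again gives $\mathrm{ord}(f^2)\mid a-1$. Since $f$ contains $s_{(1,2)}$, its square lies in the product subgroup while $f$ itself does not, so $\mathrm{ord}(f)=2\,\mathrm{ord}(f^2)$ and $\mathrm{ord}(f)\mid 2(a-1)$; the ``divides $4$'' option covers the residual low-order possibilities when $n$ is forced small.

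For (iii), case (i) is immediate: the hypothesis $\mathrm{ord}(f)\mid lm$ is exactly the assumption in the moreover clauses of Propositions \ref{8.1} and \ref{10}, which yield $|\mathrm{Fix}(f)|>0$ and $C_{a,b}/\langle f\rangle\cong\mathbb{P}^1$ directly. In case (ii), the key observation is $\mathrm{ord}(f^2)\mid\mathrm{ord}(f)\mid lm$, which brings $f^2$ under the same moreover clause and yields $C_{a,b}/\langle f^2\rangle\cong\mathbb{P}^1$; since $\langle f^2\rangle$ has index $2$ in $\langle f\rangle$, the further quotient $C_{a,b}/\langle f\rangle$ is a finite quotient of $\mathbb{P}^1$ and therefore again $\mathbb{P}^1$. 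For $|\mathrm{Fix}(f)|>0$, a direct computation (solving $x=By$, $y=Ax$ for $f=s_{(1,2)}\circ([A]\times[B])$ with diagonal $A,B$ satisfying $AB=BA=D(e_n,1)$ and $n\geq 2$) pins down $\mathrm{Fix}(f)\subseteq\{Q_1,Q_4\}$. The main obstacle will be ruling out the $\{Q_2,Q_3\}$ subcase under the hypothesis; the plan is to combine $\mathrm{ord}(f^2)\mid a-1$ with the constraint $\mathrm{ord}(f^2)\mid 2a$ extracted from Proposition \ref{10} Case 1 and the coprimality $\gcd(a-1,a)=1$ to force $\mathrm{ord}(f^2)\mid\gcd(2a,a-1)\mid 2$, at which point the small resulting orders are shown to be incompatible with $\mathrm{ord}(f)\mid lm$ for admissible $l\geq 2$. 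This leaves the intersection $\{Q_1,Q_4\}$, giving $\mathrm{Fix}(f)=\{Q_1,Q_4\}$.
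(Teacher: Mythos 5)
Your parts (i) and (ii) are essentially the paper's own argument: normalize via Lemmas \ref{7} and \ref{8}, feed the two canonical intersection patterns into Propositions \ref{8.1} and \ref{10}, and in the $s_{(1,2)}$-case pass to $f^2$ and use $\mathrm{ord}(f)=2\,\mathrm{ord}(f^2)$. Your observation that $e_n^{2a}=e_n^{a+1}e_n^{a-1}$ collapses the $e_n^{a+1}=1$ branch into $e_n^{a-1}=1$ is a correct filling-in of a step the paper leaves implicit, and your justification of $|\mathrm{Fix}(f)|>0$ in (i) matches the paper's.

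The genuine gap is in (iii), in the $s_{(1,2)}$-case. The paper disposes of that case in one line: $\mathrm{ord}(f)\mid 2(a-1)$ and $2(a-1)<lm$, so on the reading of the hypothesis the authors clearly intend (namely that $\mathrm{ord}(f)$ is a multiple of $lm\geq 2\,\mathrm{Max}\{a,b\}$ --- the same reading needed to make the ``moreover'' clauses of Propositions \ref{8.1} and \ref{10} and their proofs coherent, e.g.\ to conclude $\mathrm{ord}(f)=2a$ exactly) the hypothesis of (iii) is vacuous for swap-type automorphisms and there is nothing to prove. You instead take ``$\mathrm{ord}(f)$ divides $lm$'' literally --- a reading under which the hypothesis is satisfied by every $f$, since one may take $l=\mathrm{ord}(f)$ --- and then try to prove the conclusion in the swap case. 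Two steps fail. First, your exclusion of the $\{Q_2,Q_3\}$ subcase does not close: forcing $\mathrm{ord}(f^2)\mid\gcd(2a,a-1)\mid 2$ gives $\mathrm{ord}(f)\mid 4$, but $4$ does divide $la$ for admissible $l$ (take $l=4$, or $l=2$ when $a$ is even), so no incompatibility with ``$\mathrm{ord}(f)\mid lm$'' arises; and in that subcase $\mathrm{Fix}(f)\subseteq\{Q_1,Q_4\}$ misses $C_{a,b}$ entirely, so you would be asserting $|\mathrm{Fix}(f)|>0$ with no fixed point available. Second, your route to $C_{a,b}/\langle f\rangle\cong\mathbb P^1$ applies the ``moreover'' clause of Proposition \ref{10} to $f^2$; but $f^2$ there has equal (or mutually inverse) diagonal roots and so falls under part (i) of that proposition, which the clause's proof does not treat (it only handles parts (ii) and (iii)). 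The ``finite quotient of $\mathbb P^1$ is $\mathbb P^1$'' step is fine, but its input is not established. The repair is simply to adopt the paper's reading of the hypothesis, under which $\mathrm{ord}(f)\mid 2(a-1)<2\,\mathrm{Max}\{a,b\}\leq lm$ rules the swap case out of (iii) altogether.
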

\begin{proof}
First, we assume that $f=[D(e_n,1)]\times [D(e_m,1)]$ where $n,m\geq2$.
By Lemma \ref{7},
there are an automorphism $h\in{\rm Aut}(\pp)$, a smooth curve 
$C'_{s,t}\subset \pp$ of bidegree $(s,t)$, and an automorphism $g=[D(e_k,1)]\times [D(e_l,1)]\in{\rm Aut}(C'_{s,t})$ 
 where
$\{s,t\}=\{a,b\}$ and $\{n,m\}=\{k,l\}$
such that 
$\{Q_i\}_{i=1}^4\cap C'_{s,t}=\{Q_3,Q_4\}$ or $\{Q_2,Q_3\}$,
$h(C'_{s,t})=C_{a,b}$, and $h^{-1}\circ f\circ h=g$.
Then ${\rm ord}(f)={\rm ord}(g)$, $|{\rm Fix}(f)|=|{\rm Fix}(g)|$, and $C_{a,b}/\langle f \rangle\cong C'_{s,t}/\langle g\rangle$.

If $\{Q_i\}_{i=1}^4\cap C'_{s,t}=\{Q_3,Q_4\}$, then
by Proposition \ref{8.1},
${\rm ord}(g)$ divides $t$ or $2s$.
Since $\{s,t\}=\{a,b\}$ and ${\rm ord}(f)={\rm ord}(g)$,
we obtain ${\rm ord}(f)$ divides either $2a$ or $2b$.
Since ${\rm Fix}(g)=\{Q_3,Q_4\}$, we have $|{\rm Fix}(f)|=2$.
Additionally, 
we assume that ${\rm ord}(f)$ divides $lm$ where $l\geq2$ and $m:={\rm Max}\{a,b\}$.
By Proposition \ref{8.1}, $C'_{s,t}/\langle g\rangle \cong \mathbb P^1$, and hence $C_{a,b}/\langle f\rangle \cong \mathbb P^1$.

If $\{Q_i\}_{i=1}^4\cap C'_{s,t}=\{Q_2,Q_3\}$, then
by Proposition \ref{10} and $\{s,t\}=\{a,b\}$, we obtain
${\rm ord}(g)$ divides either $a-1$, $b-1$ $(a-1)b$ or $a(b-1)$.
Thus, ${\rm ord}(f)$ divides either $(a-1)b$ or $a(b-1)$.
Since ${\rm Fix}(g)=\{Q_2,Q_3\}$, we have $|{\rm Fix}(f)|=2$.
Additionally, 
we assume that ${\rm ord}(f)$ divides $lm$ where $l\geq2$ and $m:={\rm Max}\{a,b\}$.
By Proposition \ref{10}, $C'_{s,t}/\langle g\rangle \cong \mathbb P^1$, and hence $C_{a,b}/\langle f\rangle \cong \mathbb P^1$.
	
Next, we assume that $f=s_{(1,2)}\circ ([A]\times [B])$ where $A$ and $B$ are diagonal matrices such that $AB=BA=D(e_n,1)$ with $n\geq2$.
By Lemma \ref{8}, $a=b$.
By Lemma \ref{7},
$\{Q_i\}_{i=1}^4 \cap C_{a,b}$ is $\{Q_2,Q_3\}$ or $\{Q_1,Q_4\}$.

We assume that $\{Q_i\}_{i=1}^4 \cap C_{a,b}=\{Q_2,Q_3\}$.
Since $f^2=[D(e_n,1)]\times[D(e_n,1)]$, $f^2$ corresponds to part $(i)$ of Proposition \ref{10}.
Then ${\rm ord}(f^2)$ divides $a-1$, and hence ${\rm ord}(f)$ divides $2(a-1)$.

We assume that $\{Q_i\}_{i=1}^4 \cap C_{a,b}=\{Q_1,Q_4\}$.
By exchanging $X_0$ and $X_1$ in the first component of 
$\pp$, we obtain 
$\{Q_i\}_{i=1}^4 \cap C_{a,b}=\{Q_2,Q_3\})$ and $f^2=[D(e_n^{-1},1)]\times [D(e_n,1)]$.
Then $f^2$ corresponds to the cases $(i)$ of Proposition \ref{10}.
Then ${\rm ord}(f)$ divides $2(a-1)$.
Note that $2(a-1)<lm$ where $l\geq 2$ and $m={\rm Max}\{a,b\}$.
\end{proof}
%
%
\begin{cro}\label{-2}
Let $C_{a,b}\subset \pp$ be a smooth curve of bidegree $(a,b)$ with an automorphism $f$.
We assume that $|\{Q_i\}_{i=1}^4\cap C_{a,b}|=2$.
If ${\rm ord}(f)>2m$ where $m:={\rm Max}\{a,b\}$,
then by replacing the coordinate systems of the first and second components of $\pp$ individually if necessary,
$\{Q_i\}_{i=1}^4\cap C_{a,b}=\{Q_2,Q_3\}$.
\end{cro}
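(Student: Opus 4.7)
My plan is to combine Theorem \ref{11}, Lemma \ref{7}, and Proposition \ref{8.1} to narrow the possibilities for $\{Q_i\}_{i=1}^4 \cap C_{a,b}$ down to $\{Q_2,Q_3\}$, using the hypothesis ${\rm ord}(f) > 2m$ as the key discriminator to discard every other case. First, I would handle the ``exchange'' form of $f$: if $f = s_{(1,2)} \circ ([A] \times [B])$ as in Theorem \ref{11}(ii), then $a = b = m$ and ${\rm ord}(f)$ divides $4$ or $2(a-1)$, both of which are strictly smaller than $2m$ since $m \geq 3$, contradicting the hypothesis. Hence $f$ must fall into Theorem \ref{11}(i) and be diagonal on each $\mathbb P^1$ factor.

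Next I would apply Lemma \ref{7}(i), which says that after individually replacing the coordinate systems of the two $\mathbb P^1$ factors (and possibly swapping the two factors of $\pp$, which preserves $\max\{a,b\}$), one may reduce to the case where $\{Q_i\}_{i=1}^4 \cap C_{a,b}$ is either $\{Q_2,Q_3\}$ or $\{Q_3,Q_4\}$. It then suffices to exclude $\{Q_3,Q_4\}$: under that assumption, Proposition \ref{8.1} forces ${\rm ord}(f)$ to divide either one of the bidegrees or twice the other, so once a possible factor swap is accounted for, ${\rm ord}(f)$ divides some element of $\{a,b,2a,2b\}$ and is in particular at most $2m$. This contradicts ${\rm ord}(f) > 2m$, so the only surviving configuration is $\{Q_2,Q_3\}$.

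The argument is essentially a cascade of numerical inequalities feeding on earlier classification results, and I do not expect any substantial obstacle. The only point requiring care is bookkeeping: tracking how the coordinate changes supplied by Lemma \ref{7} interact with the bidegree $(a,b)$, and confirming that $m = \max\{a,b\}$ is invariant under any swap of the two $\mathbb P^1$ factors, so that the bound $2m$ from the hypothesis remains the correct obstruction after each reduction step.
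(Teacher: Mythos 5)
Your proposal matches the paper's own proof: exclude the exchange form $s_{(1,2)}\circ([A]\times[B])$ and all $\{Q_3,Q_4\}$-type configurations via the order bounds of Theorem \ref{11} and Proposition \ref{8.1} (noting that a factor swap only permutes $\{a,b\}$ and so preserves the bound $2m$), then move the surviving configuration $\{Q_1,Q_4\}$ to $\{Q_2,Q_3\}$ by an individual coordinate change as in Lemma \ref{7}. The only point you skip is the degenerate diagonal case $f=[D(e_n,1)]\times[I_2]$ (or its mirror), which Lemma \ref{2} disposes of since then ${\rm ord}(f)$ divides $a$ or $b$ and hence is at most $m<2m$.
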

\begin{proof}
By Lemmas \ref{2}, \ref{-1}, and \ref{8} and Theorem \ref{11},
we may assume that $f=[D(e_n,1)]\times [D(e_m,1)]$ where $n,m\geq 2$.
By Lemma \ref{7} and its proof, 
if $\{Q_i\}_{i=1}^4\cap C_{a,b}$ is either $\{Q_1,Q_2\}$, $\{Q_2,Q_4\}$, or $\{Q_1,Q_3\}$,
then by replacing the coordinate system if necessary,
we may assume that $f$ is an automorphism of smooth curve $C_{s,t}$ of bidegree $(s,t)$ sch that $\{Q_i\}_{i=1}^4\cap C_{a,b}=\{Q_3,Q_4\}$ and $\{a,b\}=\{s,t\}$.
By Proposition \ref{8.1}, ${\rm ord}(f)<2m$.
This is a contradiction.
Thus, $\{Q_i\}_{i=1}^4\cap C_{a,b}$ is either $\{Q_2,Q_3\}$ or $\{Q_1,Q_4\}$.
If  $\{Q_i\}_{i=1}^4\cap C_{a,b}=\{Q_1,Q_4\}$, then as like the proof of Lemma \ref{7},
by exchanging $X_0$ and $X_1$ in the first component of $\pp$,
we have $\{Q_i\}_{i=1}^4\cap C_{a,b}=\{Q_2,Q_3\}$.
\end{proof}

Next, we consider the case where $|\{Q_i\}_{i=1}^4 \cap C_{a,b}|=3$.
\begin{lem}\label{13--}
Let $C\subset \pp$ be a smooth curve with an automorphism $f$ such that $|\{Q_i\}_{i=1}^4 \cap C|=3$.
\begin{enumerate}
\item[$(i)$]We assume that $f=[D(e_n,1)]\times [D(e_m,1)]$. 
By replacing the coordinate systems of the first and second components of $\pp$ individually if necessary,
we get that $\{Q_i\}_{i=1}^4 \cap C=\{Q_2,Q_3,Q_4\}$, and $f=[D(e_s^{\epsilon},1)]\times [D(e_t^{\delta},1)]$ where $\{s,t\}=\{n,m\}$ and $\epsilon,\delta\in\{{\pm 1}\}$. 
\item[$(ii)$]We assume that $f=s_{(1,2)}\circ ([A]\times [B])$ such that $AB=BA=D(e_n,1)$, and $A$ and $B$ are diagonal matrices. 
Then $\{Q_i\}_{i=1}^4 \cap C$ is $\{Q_1,Q_2,Q_3\}$ or $\{Q_2,Q_3,Q_4\}$.
\end{enumerate}
\end{lem}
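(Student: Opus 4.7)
The plan is to mimic the case-by-case argument of Lemma \ref{7}, but now with the $3$-element subsets of $\{Q_1,Q_2,Q_3,Q_4\}$. The key point is to record how the admissible coordinate changes on $\pp$ act simultaneously on the four points $Q_i$ and on a diagonal automorphism $f$.

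For part $(i)$, exchanging $X_0$ and $X_1$ in the first factor induces the involution $(Q_1\,Q_3)(Q_2\,Q_4)$ on $\{Q_i\}_{i=1}^4$ and replaces $f=[D(e_n,1)]\times[D(e_m,1)]$ by $[D(e_n^{-1},1)]\times[D(e_m,1)]$; exchanging $Y_0$ and $Y_1$ induces $(Q_1\,Q_2)(Q_3\,Q_4)$ and replaces $f$ by $[D(e_n,1)]\times[D(e_m^{-1},1)]$; swapping the two factors of $\pp$ (that is, applying $\s$) induces $(Q_2\,Q_3)$ and replaces $f$ by $[D(e_m,1)]\times[D(e_n,1)]$. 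A direct check, using these three operations, shows:
\begin{itemize}
\item if $\{Q_i\}_{i=1}^4 \cap C=\{Q_1,Q_2,Q_4\}$, apply the swap of $X_0$ and $X_1$;
\item if $\{Q_i\}_{i=1}^4 \cap C=\{Q_1,Q_3,Q_4\}$, apply the swap of $Y_0$ and $Y_1$;
\item if $\{Q_i\}_{i=1}^4 \cap C=\{Q_1,Q_2,Q_3\}$, apply first the swap of $Y_0$ and $Y_1$, then the swap of $X_0$ and $X_1$.
\end{itemize}
In every case we arrive at $\{Q_i\}_{i=1}^4 \cap C=\{Q_2,Q_3,Q_4\}$, while $f$ remains of the form $[D(e_s^{\epsilon},1)]\times[D(e_t^{\delta},1)]$ with $\{s,t\}=\{n,m\}$ and $\epsilon,\delta\in\{\pm1\}$, since each operation only permutes the two primitive roots and possibly inverts them.

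For part $(ii)$, note that because $A$ and $B$ are diagonal, $[A]\times[B]$ fixes each of the four points $Q_1,\dots,Q_4$. The involution $\s$ fixes $Q_1$ and $Q_4$ and swaps $Q_2$ with $Q_3$. Hence $f=\s\circ([A]\times[B])$ acts on $\{Q_i\}_{i=1}^4$ exactly by the transposition $(Q_2\,Q_3)$. Since $f$ is an automorphism of $C$, the subset $\{Q_i\}_{i=1}^4\cap C$ must be stable under this transposition, and so must its three-element complement in $\{Q_1,\dots,Q_4\}$, namely the omitted $Q_i$. Because only $Q_1$ and $Q_4$ are fixed by $(Q_2\,Q_3)$, the omitted point is $Q_1$ or $Q_4$, giving $\{Q_i\}_{i=1}^4\cap C=\{Q_2,Q_3,Q_4\}$ or $\{Q_1,Q_2,Q_3\}$, as claimed.

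The only real bookkeeping hurdle is verifying that each coordinate change in part $(i)$ really preserves the diagonal product form of $f$ and produces the claimed primitive roots; this is a direct matrix computation, analogous to the one already carried out in the proof of Lemma \ref{7}.
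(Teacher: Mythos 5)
Your proof is correct and follows essentially the same route as the paper: reduce part $(i)$ by applying the coordinate swaps that permute the $Q_i$, and for part $(ii)$ observe that $f$ acts on $\{Q_i\}_{i=1}^4$ as the transposition $(Q_2\,Q_3)$, so the omitted point must be $Q_1$ or $Q_4$. In fact your pairing of swaps with cases is the accurate one (the $X_0\leftrightarrow X_1$ swap realizes $(Q_1\,Q_3)(Q_2\,Q_4)$ and handles $\{Q_1,Q_2,Q_4\}$, the $Y_0\leftrightarrow Y_1$ swap realizes $(Q_1\,Q_2)(Q_3\,Q_4)$ and handles $\{Q_1,Q_3,Q_4\}$), whereas the paper's ``resp.''\ appears to have these two cases transposed.
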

\begin{proof}
We assume that $f=[D(e_n,1)]\times [D(e_m,1)]$ where $n,m\geq 2$.
If $\{Q_i\}_{i=1}^4 \cap C=\{Q_1,Q_2,Q_4\}$ $($resp. $\{Q_1,Q_3,Q_4\})$, then by exchanging $Y_0$ and $Y_1$ in the second component (resp. by exchanging $X_0$ and $X_1$ in the first component) of 
	$\pp$, we obtain 
	$\{Q_i\}_{i=1}^4 \cap C=\{Q_2,Q_3,Q_4\}$.
In these cases,	$f$ is $[D(e_n,1)]\times[D(e_m^{-1},1)]$ (resp. $f$ is $[D(e_n^{-1},1)]\times[D(e_m,1)]$).

If $\{Q_i\}_{i=1}^4 \cap C=\{Q_1,Q_2,Q_3\}$, then by exchanging $X_0$ and $X_1$ in the first component and 
exchanging $Y_0$ and $Y_1$ in the second component of $\pp$,
we have 
$\{Q_i\}_{i=1}^4 \cap C=\{Q_2,Q_3,Q_4\}$.
In this case, $f$ is $[D(e_n^{-1},1)]\times[D(e_m^{-1},1)]$.
	
Next, we assume that $f=\s\circ ([A]\times [B])$ such that $AB=BA=D(e_n,1)$, and $A$ and $B$ are diagonal matrices. 
Since $A$ and $B$ are diagonal matrices,
$[A]\times[B](Q_i)=Q_i$ for $i=1,\ldots, 4$.
Since $\s(Q_1)=Q_1$, $\s(Q_4)=Q_4$, $\s(Q_2)=Q_3$, and $|\{Q_i\}_{i=1}^4 \cap C|=3$, 
we get that $\{Q_i\}_{i=1}^4 \cap C$ is either $\{Q_1,Q_2,Q_3\}$ or $\{Q_2,Q_3,Q_4\}$.
\end{proof}
\begin{pro}\label{13-}
Let $C_{a,b}\subset \pp$ be a smooth curve of bidegree $(a,b)$ with an automorphism $f=[D(e_n,1)]\times [D(e_m,1)]$ where $n,m\geq 2$.
We assume that $|\{Q_i\}_{i=1}^4 \cap C_{a,b}|=\{Q_2,Q_3,Q_4\}$.
Then ${\rm ord}(f)$ divides
${\rm gcd}(a-2,2b-1)$ or ${\rm gcd}(2a-1,b-2)$.
\end{pro}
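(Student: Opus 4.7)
The plan is to imitate the coefficient-comparison strategy used in the proofs of Propositions \ref{8.1} and \ref{10}. Since $Q_1$ is the unique point of $\{Q_i\}_{i=1}^4$ not lying on $C_{a,b}$, Lemma \ref{1} forces $s_{a,b}\neq 0$ while $s_{a,0}=s_{0,b}=s_{0,0}=0$, and the non-vanishing of $\fb$, $\fc$, $\fd$ (again via Lemma \ref{1}) yields the three disjunctions $(s_{a-1,0},s_{a,1})\neq(0,0)$, $(s_{1,b},s_{0,b-1})\neq(0,0)$, and $(s_{1,0},s_{0,1})\neq(0,0)$. I would then expand $\sum_{i=1}^4 F_{Q_i}$ and impose $f^{\ast}F=tF$ for some $t\in\mathbb C^{\ast}$, comparing coefficients of each surviving monomial.

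The coefficient of $X_0^aY_0^b$ pins down $t=e_n^ae_m^b$, while the three pairs of possibly-nonzero coefficients give the disjunctive relations
\[
(e_n^{a-1}-t)(e_n^ae_m-t)=0,\quad (e_ne_m^b-t)(e_m^{b-1}-t)=0,\quad (e_n-t)(e_m-t)=0.
\]
The last relation partitions the analysis into Case A ($t=e_n$) and Case B ($t=e_m$); these are symmetric under $(n,a)\leftrightarrow(m,b)$, and each branches into four sub-cases coming from the first two disjunctions. The heart of the argument is to verify that in three of the four sub-cases within each main case, combining the derived relation with $e_n^{a-1}e_m^b=1$ (in Case A) or $e_n^ae_m^{b-1}=1$ (in Case B) forces $e_n=1$ or $e_m=1$, contradicting $n,m\geq 2$.

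The surviving sub-case in Case A is $t=e_n^{a-1}$ together with $t=e_m^{b-1}$, giving $e_n^{a-2}=1$ and $e_n=e_m^{b-1}$. Substituting these into $e_n^{a-1}e_m^b=1$ and combining with $e_m^{(a-2)(b-1)}=1$ yields $e_m^{2b-1}=1$. Since $\gcd(b-1,2b-1)=1$, the two conditions ${\rm ord}(e_m)\mid(a-2)(b-1)$ and ${\rm ord}(e_m)\mid 2b-1$ collapse to ${\rm ord}(e_m)\mid\gcd(a-2,2b-1)$, and because $e_n$ is a power of $e_m$ we have ${\rm ord}(f)={\rm ord}(e_m)$. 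The symmetric analysis of Case B yields ${\rm ord}(f)\mid\gcd(2a-1,b-2)$.

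The main obstacle is the enumeration of the eight sub-cases: verifying that exactly two of them survive while the other six collapse via $n,m\geq 2$ is essentially the whole argument. Everything after the surviving sub-cases is a short exponent manipulation together with the elementary gcd identity $\gcd(b-1,2b-1)=1$ (respectively $\gcd(a-1,2a-1)=1$ in Case B).
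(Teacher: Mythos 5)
Your proposal is correct and follows essentially the same route as the paper: the same reduction via Lemma \ref{1} to the system $e_n^ae_m^b=t$, $(e_n^{a-1}-t)(e_n^ae_m-t)=0$, $(e_ne_m^b-t)(e_m^{b-1}-t)=0$, $(e_n-t)(e_m-t)=0$, the same split on $t=e_n$ versus $t=e_m$ with the $(n,a)\leftrightarrow(m,b)$ symmetry, and the same surviving sub-case $e_n^{a-2}=1$, $e_m^{b-1}=e_n$ leading to $e_m^{2b-1}=1$. The only difference is organizational: the paper treats $e_n=e_m=t$ as a separate preliminary case (where it pins down ${\rm ord}(f)=3$), while you absorb it into the uniform four-sub-case elimination, which works equally well.
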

\begin{proof}
Let $\ff$ be the defining equation of $C_{a,b}$.
By Lemma \ref{1}, the polynomial $F_{Q_1}$ contains the monomial $X_0^a Y_0^b$, whereas 
$F_{Q_2}$ does not contain $X_0^a Y_1^b$, 
$F_{Q_3}$ does not contain $X_1^a Y_0^b$, and
$F_{Q_4}$ does not contain the monomial $X_1^a Y_1^b$. Therefore, let
\begin{equation}\label{lem3}
\begin{split}
F:=&s_{a,b}X_0^aY_0^b+s_{a-1,0}X_0^{a-1}X_1Y_1^b+s_{a,1}X_0^aY_0Y_1^{b-1}+s_{1,b}X_0X_1^{a-1}Y_0^b\\
		&+s_{0,b-1}X_1^aY_0^{b-1}Y_1+s_{1,0}X_0X_1^{a-1}Y_1^b+s_{0,1}X_1^aY_0Y_1^{b-1}
	\end{split}
\end{equation}
where $s_{a,b}\in\mathbb C^{\ast}$ and $s_{a-1,0},s_{a,1},s_{1,b},s_{0,b-1},s_{1,0},$
$s_{0,1}\in\mathbb C$ with
$(s_{a-1,0},$
$s_{a,1})\not=(0,0)$, $(s_{1,b},s_{0,b-1})\not=(0,0)$, and $(s_{1,0},s_{0,1})\not=(0,0)$.
Note that $\mathbb{E} \setminus\{(a,b), (a,0),(a-1,0), (a,1),(0,b), (1,b), (0,b-1),(0,0), (1,0), (0,1)\}=\{(a-1,b),\,(a,b-1)\}$.
Then, by the equation~\eqref{01}, 
\[
\sum_{i=1}^4F_{Q_i} = F + s_{a-1,b} X_0^{a-1} X_1 Y_0^b+s_{a,b-1} X_0^aY_0^{b-1}Y_1
\]
where $s_{a-1,b},s_{a,b-1} \in \mathbb{C}$.
Since $f$ is an automorphism of $C_{a,b}$, it follows that
$f^{\ast}\ff=t\ff$ for some $t\in\mathbb C^{\ast}$.
Thus, by the equation~\eqref{02}, $f^{\ast}F=tF$, i.e.
	\begin{dmath*}
		e_n^ae_m^bs_{a,b}X_0^aY_0^b+e_n^{a-1}s_{a-1,0}X_0^{a-1}X_1Y_1^b+e_n^ae_ms_{a,1}X_0^aY_0Y_1^{b-1}+e_ne_m^bs_{1,b}X_0X_1^{a-1}Y_0^b+e_m^{b-1}s_{0,b-1}X_1^aY_0^{b-1}Y_1+e_ns_{1,0}X_0X_1^{a-1}Y_1^b+e_ms_{0,1}X_1^aY_0Y_1^{b-1}\\
		=t(s_{a,b}X_0^aY_0^b+s_{a-1,0}X_0^{a-1}X_1Y_1^b+s_{a,1}X_0^aY_0Y_1^{b-1}+s_{1,b}X_0X_1^{a-1}Y_0^b+s_{0,b-1}X_1^aY_0^{b-1}Y_1+s_{1,0}X_0X_1^{a-1}Y_1^b+s_{0,1}X_1^aY_0Y_1^{b-1}).
	\end{dmath*}
By comparing the coefficients of the monomials $X_0^aY_0^b$, $X_0^{a-1}X_1Y_1^b$, $X_0^aY_0Y_1^{b-1}$, $X_0X_1^{a-1}Y_0^b$, $X_1^aY_0^{b-1}Y_1$, $X_0X_1^{a-1}Y_1^b$, and $X_1^aY_0Y_1^{b-1}$, and using the assumptions that $s_{a,b}\in\mathbb C^{\ast}$,
$(s_{a-1,0},$
$s_{a,1})\not=(0,0)$, $(s_{1,b},s_{0,b-1})\not=(0,0)$, and $(s_{1,0},s_{0,1})\not=(0,0)$, we obtain the following equations:
	\begin{equation}\label{eq3}
		\begin{aligned}
			e_n^ae_m^b&=t,\ \ \ \ \ \ \ \ &(e_n^{a-1}-t)(e_n^ae_m-t)&=0,\\
			(e_ne_m^b-t)(e_m^{b-1}-t)&=0,\ \ \ \ \ \ \ \ &(e_n-t)(e_m-t)&=0.
		\end{aligned}
	\end{equation}
By the forth equation in $(\ref{eq3})$, either $e_n=t$ or $e_m=t$.

\textbf{Case 1:} We assume that $e_n=e_m=t$. 
	By the equation $(e_n-t)(e_m-t)=0$ in $(\ref{eq3})$, $e_n=t$.
	The equations in $(\ref{eq3})$ are equal to the following equations:
	\[	e_n^{a+b-1}=1,\ \ \ \ \ \ (e_n^{a-2}-1)(e_n^a-1)=0,\ \ \ \ \ \ 
	(e_n^b-1)(e_n^{b-2}-1)=0.\]
	By \ta second equation, either $e_n^{a-2}=1$ or $e_n^a=1$.
	If $e_n^{a-2}=1$,
	then by the above first equation, we have $e_n^{b+1}=1$. 
Since $n\geq 2$,
the third equation implies $e_n^{b-2}=1$, and hence $e_n^3=1$. 
	Then $a-2$ and $b-2$ are multiples of $3$, and ${\rm ord}(f)=3$.
since $a-2$ and $b-2$ are multiples of $3$, we get ${\rm gcd}(a-2,2b-1)=3$.
Thus, ${\rm ord}(f)$ divides ${\rm gcd}(a-2,2b-1)$.
	If $e_n^a=1$,
	then by the above first equation, $e_n^{b-1}=1$. 
	Since $n\geq 2$, this contradicts the above third equation.
	
\textbf{Case 2:} We assume that $e_n\not=e_m$, and $e_n=t$.
The equations in $(\ref{eq3})$ are equal to the following equations:
	\[e_n^{a-1}e_m^b=1,\ \ \ \ \ \ (e_n^{a-2}-1)(e_n^{a-1}e_m-1)=0,\ \ \ \ \ \ (e_m^b-1)(e_m^{b-1}-e_n)=0.\]
	By the above third equation, either $e_m^b=1$ or $e_m^{b-1}=e_n$.
If $e_m^b=1$, then \ta first equation implies $e_n^{a-1}=1$.
By \ta second equation, it follows that $e_n=1$ or $e_m=1$.
This contradicts that $n,m\geq 2$.
Therefore, $e_m^b\not=1$ and $e_m^{b-1}=e_n$.
By \ta first equation, we obtain $e_n^ae_m=1$.
By the above second equation and $e_n\not=1$, 
we get 
$e_n^{a-2}=1$ and $e_n^{a-1}e_m\not=1$.
	Since $e_n^{a-1}e_m^b=1$, and $e_n^{a-2}=1$, 
we have $e_m=e_n^{-2}$.
Since $e_n^{a-2}=1$ and $e_m=e_n^{-2}$,
${\rm ord}(f)$ divides $a-2$.
Since $e_m^{b-1}=e_n$ and $e_m=e_n^{-2}$, we get that $e_m^{2b-1}=1$.
Since $e_m^{b-1}=e_n$, ${\rm ord}(f)$ divides $2b-1$.
Therefore, ${\rm ord}(f)$ divides ${\rm gcd}(a-2,2b-1)$.
	
\textbf{Case 3:} We assume that $e_n\not=e_m$, and $e_m=t$.
By the symmetry of $a,n$ and $b,m$ in the equations $(\ref{eq3})$,
we get that ${\rm ord}(f)$ divides ${\rm gcd}(2a-1,b-2)$. 
\end{proof}
\begin{thm}\label{13}
Let $C_{a,b}\subset \pp$ be a smooth curve of bidegree $(a,b)$ with an automorphism $f$.
We assume that $|\{Q_i\}_{i=1}^4 \cap C_{a,b}|=3$.
\begin{enumerate}
\item[$(i)$]If $f=[D(e_n,1)]\times [D(e_m,1)]$ where $n,m\geq2$, then ${\rm ord}(f)$ divides either $a-2$ or $b-2$.
\item[$(ii)$]If $f=s_{(1,2)}\circ ([A]\times [B])$ where $A$ and $B$ are diagonal matrices such that $AB=BA=D(e_n,1)$ with $n\geq2$,
then ${\rm ord}(f)=6$.
\item[$(iii)$]If $a\geq 3$, then ${\rm ord}(f)$ does not divide $lm$ where $l\geq2$ and $m:={\rm Max}\{a,b\}$.
\end{enumerate}
\end{thm}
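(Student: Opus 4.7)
My plan is to handle the three parts of the theorem in order, each time reducing to the propositions and lemmas already established. By Lemmas \ref{-1} and \ref{8} any $f$ is of the shape assumed in $(i)$ or $(ii)$, so the two settings together exhaust all possibilities for $(iii)$.

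For $(i)$, I first invoke Lemma \ref{13--}$(i)$ to perform individual coordinate changes on the two $\mathbb P^1$ factors so that $\{Q_i\}_{i=1}^4\cap C_{a,b}=\{Q_2,Q_3,Q_4\}$. After the change, $f$ retains the shape $[D(\zeta,1)]\times [D(\eta,1)]$ with $\zeta,\eta$ primitive roots of unity of orders in $\{n,m\}$; the possible replacement of $e_n$ by $e_n^{-1}$ (or $e_m$ by $e_m^{-1}$) is harmless since the orders are unchanged. Proposition \ref{13-} then applies and gives that ${\rm ord}(f)$ divides either ${\rm gcd}(a-2,2b-1)$ or ${\rm gcd}(2a-1,b-2)$; the former divides $a-2$ and the latter divides $b-2$.

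For $(ii)$, Lemma \ref{8}$(i)$ gives $a=b$, and Lemma \ref{13--}$(ii)$ together with a coordinate exchange if needed lets me assume $\{Q_i\}_{i=1}^4\cap C_{a,b}=\{Q_2,Q_3,Q_4\}$. The key computation is
\[f^2=[BA]\times [AB]=[D(e_n,1)]\times [D(e_n,1)],\]
an automorphism of the type treated in $(i)$, but with \emph{both} diagonal entries equal to the same $e_n$. This puts $f^2$ into Case $1$ of the proof of Proposition \ref{13-}; rerunning that case (and using $n\geq 2$ to discard the branch $e_n^a=1$) forces $e_n^3=1$, so $n=3$, ${\rm ord}(f^2)=3$, and as a byproduct $a-2$ is a multiple of $3$. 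Hence $3$ divides ${\rm ord}(f)$ which divides $6$, and I conclude ${\rm ord}(f)=6$ by noting that ${\rm ord}(f)$ must be even: $f^{\ast}$ swaps the two generators of ${\rm Pic}(\pp)$ (as in the proof of Lemma \ref{8}), so $f^k={\rm id}$ demands $k$ even.

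For $(iii)$ the strategy is to show ${\rm ord}(f)<2m$, which rules out ${\rm ord}(f)=lm$ for every integer $l\geq 2$. In the $(i)$ regime, ${\rm ord}(f)\leq \max\{a-2,b-2\}=m-2<2m$. In the $(ii)$ regime, ${\rm ord}(f)=6$ and the congruence $a\equiv 2\pmod 3$ just extracted, combined with $a\geq 3$, forces $a\geq 5$; hence $m=a\geq 5$ and $2m\geq 10>6$. The main obstacle I anticipate sits in $(ii)$: I have to milk both the order of $f^2$ and the congruence $a\equiv 2\pmod 3$ out of Case $1$ of Proposition \ref{13-}, and then rule out ${\rm ord}(f)=3$ via the swap. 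Without that congruence, $(iii)$ would fail for $(ii)$-type automorphisms when $m\in\{3,4\}$, so the bookkeeping there is tight.
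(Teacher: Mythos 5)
Your argument follows the paper's proof essentially step for step: part $(i)$ via Lemma \ref{13--} and Proposition \ref{13-}, part $(ii)$ by applying Proposition \ref{13-} to $f^2=[D(e_n,1)]\times[D(e_n,1)]$ after reducing to $\{Q_2,Q_3,Q_4\}$, and part $(iii)$ by bounding ${\rm ord}(f)$ strictly below $2m$ in both regimes using the congruence $a\equiv 2\pmod 3$. The only (welcome) refinements are that you extract $3\mid a-2$ by rerunning Case 1 of the proof of Proposition \ref{13-} rather than by computing ${\rm gcd}(a-2,2a-1)={\rm gcd}(a-2,3)$ from its statement as the paper does, and that you justify ${\rm ord}(f)=6$ rather than $3$ explicitly via the parity forced on ${\rm ord}(f)$ by the swap of the two rulings, a point the paper leaves implicit.
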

\begin{proof}
First, we assume that $f=[D(e_n,1)]\times [D(e_m,1)]$ where $n,m\geq 2$.
By Lemma \ref{13--},
we may assume that $\{Q_i\}_{i=1}^4 \cap C_{a,b}=\{Q_2,Q_3,Q_4\}$.
By Proposition \ref{13-}, ${\rm ord}(f)$ divides ${\rm gcd}(a-2,2b-1)$ or ${\rm gcd}(2a-1,b-2)$.
Thus, ${\rm ord}(f)$ divides either $a-2$ or $b-2$.

Next, we assume that $f=s_{(1,2)}\circ ([A]\times [B])$ where $A$ and $B$ are diagonal matrices such that $AB=BA=D(e_n,1)$ with $n\geq2$.
By Lemma \ref{8}, $a=b$.
By Lemma \ref{13--}, is $\{Q_i\}_{i=1}^4 \cap C=\{Q_1,Q_2,Q_3\}$ or $\{Q_2,Q_3,Q_4\}$.

We assume that $\{Q_i\}_{i=1}^4 \cap C=\{Q_2,Q_3,Q_4\}$.
By Proposition \ref{13-},
${\rm ord}(f^2)$ divides ${\rm gcd}(a-2,2b-1)$ or ${\rm gcd}(2a-1,b-2)$.
Since $a=b$, ${\rm gcd}(a-2,2b-1)={\rm gcd}(2a-1,b-2)={\rm gcd}(a-2,3)$.
Since $n\geq 2$,
 we get that $a-2$ divides $3$, and ${\rm ord}(f^2)=3$. 
Thus, ${\rm ord}(f)=6$.

We assume that $\{Q_i\}_{i=1}^4 \cap C=\{Q_1,Q_2,Q_3\}$.
By exchanging $X_0$ and $X_1$ in the first component and $Y_0$ and $Y_1$ in the second component of 
$\pp$, we obtain $\{Q_i\}_{i=1}^4 \cap C=\{Q_2,Q_3,Q_4\}$.
Then $f^2=[D(e_n^{-1},1)]\times [D(e_n^{-1},1)]$.
Similarly to the case $\{Q_i\}_{i=1}^4 \cap C=\{Q_2,Q_3,Q_4\}$, we obtain that ${\rm ord}(f)=6$ and $a-2$ divides $3$.

From the above, parts $(i)$ and $(ii)$ of this theorem are shown.

Finally, we assume that $a\geq 3$ and ${\rm ord}(f)$ divides $lm$ where $l\geq2$ and $m:={\rm Max}\{a,b\}$.
Since ${\rm ord}(f)>a-2$ and ${\rm ord}(f)>b-2$,
$f$ corresponds to part $(ii)$ of this theorem.
Therefore,
we have
$a=b$, $a-2$ divides $3$, and ${\rm ord}(f)=6$.
Since $l\geq 2$, we get that $l=2$ and $a=3$.
This contradicts that $a-2$ divides $3$.
\end{proof}
%
%
%
We consider the case where $|\{Q_i\}_{i=1}^4 \cap C_{a,b}|=4$.
\begin{pro}\label{16-}
Let $C_{a,b}\subset \pp$ be a smooth curve of bidegree $(a,b)$
with an automorphism $f=[D(e_n,1)]\times [D(e_m,1)]$ where $n,m\geq 2$.
We assume that $|\{Q_i\}_{i=1}^4 \cap C_{a,b}|=4$.
Then one of the following holds:
\begin{enumerate}
\item[$(i)$]Either $e_n=e_m$ or $e_n=e_m^{-1}$.
In this case, ${\rm ord}(f)$ divides either ${\rm gcd}(a,b-2)$ or ${\rm gcd}(a-2,b)$.
\item[$(ii)$]$e_n\not=e_m$ and $e_m=e_n^i$ for some $2\leq i\leq b-2$, and $e_n^a=e_m^{b-2}=1$. In this case, ${\rm ord}(f)$ divides $a$.
\item[$(iii)$]$e_n\not=e_m$ and $e_n=e_m^i$ for some $2\leq i\leq a-2$, and $e_n^{a-2}=e_m^b=1$. In this case, ${\rm ord}(f)$ divides $b$.
\item[$(iv)$]$e_n\not=e_m$, $e_n\not=e_m^{-1}$, $e_m^{b-1}=e_n$, and $e_m^{(a-1)(b-1)+1}=1$.
In this case, ${\rm ord}(f)$ divides $(a-1)(b-1)+1$, and
\begin{dmath*}
\sum_{i=1}^4F_{Q_i}=s_{a-1,b}X_0^{a-1}X_1Y_0^b+s_{a,1}X_0^aY_0Y_1^{b-1}+s_{0,b-1}X_1^aY_0^{b-1}Y_1+
s_{1,b}X_0X_1^{a-1}Y_1^b
\end{dmath*}
where $s_{a-1,b},s_{a,1},s_{0,b-1},s_{1,b}\in\mathbb C^{\ast}$.
\item[$(v)$]$e_n\not=e_m$, $e_n\not=e_m^{-1}$, $e_n^{a-1}=e_m$, and $e_n^{(a-1)(b-1)+1}=1$.
In this case, ${\rm ord}(f)$ divides $(a-1)(b-1)+1$, and
\begin{dmath*}
\sum_{i=1}^4F_{Q_i}=s_{a,b-1}X_0^aY_0^{b-1}Y_1+s_{1,b}X_0X_1^{a-1}Y_0^b+s_{a-1,0}X_0^{a-1}X_1Y_1^b+s_{0,1}X_1^aY_0Y_1^{b-1}
\end{dmath*}
where $s_{a,b-1},s_{1,b},s_{a-1,0},s_{0,1}\in\mathbb C^{\ast}$.
\end{enumerate}	
\end{pro}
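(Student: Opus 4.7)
The plan is to follow the pattern established in Propositions~\ref{8.1}, \ref{10}, and \ref{13-}. Let $F$ denote the defining equation of $C_{a,b}$. Since $Q_1,\dots,Q_4\in C_{a,b}$, Lemma~\ref{1} yields $s_{a,b}=s_{a,0}=s_{0,b}=s_{0,0}=0$, while the non-vanishing of each $F_{Q_i}$ means that in each of the four pairs $(s_{a-1,b},s_{a,b-1})$, $(s_{a-1,0},s_{a,1})$, $(s_{1,b},s_{0,b-1})$, $(s_{1,0},s_{0,1})$ at least one coefficient is non-zero. Writing $f^{\ast}F=tF$ and comparing the coefficients of the eight remaining monomials of $\sum_{i=1}^{4}F_{Q_i}$ yields the four disjunctions
\begin{align*}
(\mathrm E_1)\quad (e_n^{a-1}e_m^b-t)(e_n^ae_m^{b-1}-t)&=0,\\
(\mathrm E_2)\quad (e_n^{a-1}-t)(e_n^ae_m-t)&=0,\\
(\mathrm E_3)\quad (e_ne_m^b-t)(e_m^{b-1}-t)&=0,\\
(\mathrm E_4)\quad (e_n-t)(e_m-t)&=0.
\end{align*}

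From $(\mathrm E_4)$ we have $t\in\{e_n,e_m\}$; conjugating by $s_{(1,2)}$ exchanges the two possibilities and interchanges the parameters $(a,b,n,m)$ as well as the statements $(ii)\leftrightarrow(iii)$, $(iv)\leftrightarrow(v)$, so I may assume $t=e_n$. When $e_n=e_m$, substituting $t=e_n=e_m$ into $(\mathrm E_1)$--$(\mathrm E_3)$ produces $e_n^{a+b-2}=1$ alongside the alternatives $e_n^{a-2}=1$ or $e_n^a=1$ and $e_n^{b-2}=1$ or $e_n^b=1$; tracking the four sub-possibilities gives ${\rm ord}(f)\mid\gcd(a-2,b)$ or ${\rm ord}(f)\mid\gcd(a,b-2)$. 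When $e_n=e_m^{-1}$, setting $e_m=e_n^{-1}$ collapses $(\mathrm E_2)$ and $(\mathrm E_3)$ to $(e_n^{a-1}-e_n)^2=0$ and $(e_n^{1-b}-e_n)^2=0$, forcing $e_n^{a-2}=e_n^b=1$. In either case we land in case $(i)$.

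For the remaining configuration $t=e_n$ with $e_n\neq e_m^{\pm 1}$, I split into four sub-cases according to which factor vanishes in each of $(\mathrm E_2), (\mathrm E_3)$. If $e_n^{a-2}=1$ and $e_m^b=1$, then $(\mathrm E_1)$ is automatic and $s_{a,1}=s_{1,b}=s_{0,1}=0$; since $F$ is irreducible, hence not divisible by $X_0$, some $s_{0,j}$ with $(0,j)\in\mI$ must be non-zero, yielding the relation $e_n=e_m^j$ with $2\leq j\leq b-2$, which is case $(iii)$. If $e_n^{a-1}e_m=1$ and $e_m^b=1$, applying the mirror argument to non-divisibility by $X_1$ produces case $(ii)$. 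If $e_n^{a-1}e_m=1$ and $e_m^{b-1}=e_n$, substituting gives $e_m^{(a-1)(b-1)+1}=1$; a direct check shows that the first option of $(\mathrm E_1)$ is automatic, its second option would force $e_n=e_m$, and the first options of $(\mathrm E_2), (\mathrm E_3)$ would each force $e_n=e_m^{-1}$, so the surviving non-zero coefficients are exactly the four listed in $(iv)$. Finally, the remaining sub-case $e_n^{a-2}=1$ together with $e_m^{b-1}=e_n$ produces via $(\mathrm E_1)$ the additional relation $e_n^2=1$; tracing the induced coefficient pattern shows $\sum_{i=1}^{4}F_{Q_i}$ becomes divisible by $Y_1$, and a direct check of which $\mI$-monomials the equation $f^*F=tF$ permits shows that the only way to restore irreducibility of $F$ collapses the situation back into case $(i)$, contradicting the assumption $e_n\neq e_m^{\pm 1}$.

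The main obstacle is the bookkeeping in the last paragraph: for each sub-case one must simultaneously derive the divisibility claim on ${\rm ord}(f)$ from the exponent constraints, determine every $s_{i,j}$ in $\mathbb E\cup\mI$ that is forced to vanish or to be non-zero, and invoke smoothness of $C_{a,b}$ (equivalently, non-divisibility of $F$ by any of $X_0,X_1,Y_0,Y_1$) to eliminate phantom configurations. The five outcomes $(i)$--$(v)$ in the proposition are exactly the residues left after this cleanup.
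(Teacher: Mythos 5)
Your overall strategy is the same as the paper's: reduce to $t=e_n$ via the $s_{(1,2)}$-symmetry, dispose of $e_n=e_m^{\pm 1}$ first, then split on which factor of each coefficient equation vanishes and use irreducibility (non-divisibility of $F$ by a coordinate) to force a monomial from $\mI$ to survive. Your treatment of $e_n=e_m$, of $e_n=e_m^{-1}$ (which is actually tidier than the paper's, where that case is scattered over several sub-cases), of sub-case C (giving $(iv)$), and of sub-case D (contradiction) all check out. However, two of your coefficient computations are wrong, and one of them misassigns an outcome.

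First, in sub-case A ($e_n^{a-2}=1$, $e_m^b=1$) the vanishing pattern is not $s_{a,1}=s_{1,b}=s_{0,1}=0$: the condition for $s_{1,b}$ is $e_m^b=1$, which holds, so $s_{1,b}$ survives (indeed smoothness at $Q_3$ then forces $s_{1,b}\neq 0$). The coefficients that actually vanish are $s_{a,b-1},s_{a,1},s_{0,b-1},s_{0,1}$; in particular you need $s_{0,b-1}=0$, which you never establish, for your ``$F$ not divisible by $X_0$'' step to force an $\mI$-monomial $X_1^aY_0^jY_1^{b-j}$. The conclusion (case $(iii)$) is right, but the argument as written does not reach it. Second, sub-case B ($e_n^{a-1}e_m=1$, $e_m^b=1$) does not produce case $(ii)$: here $(\mathrm E_1)$ reads $(e_n^{a-2}-1)(e_m^{b-2}-1)=0$, and $e_n^{a-2}=1$ would give $e_n=e_m^{-1}$, so $e_m^{b-2}=1$ and hence $e_m=-1$; then no monomial $X_1^aY_0^jY_1^{b-j}$ can satisfy $e_m^j=e_n$ without forcing $e_n=\pm 1=e_m$, so every coefficient $s_{0,j}$ vanishes, $F$ is divisible by $X_0$, and the sub-case is vacuous. (Your proposed ``mirror argument with $X_1$'' does not apply, since $s_{a,b-1}$ and $s_{a,1}$ are not forced to vanish there.) Case $(ii)$ is in fact obtained only from the symmetric branch $t=e_m$, as the mirror of sub-case A --- which your opening symmetry reduction does cover, so the final list of outcomes is unharmed, but the proof needs these two sub-cases rewritten before it is correct.
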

\begin{proof}
Let $\ff$ be the defining equation of $C_{a,b}$.
By Lemma \ref{1}, the polynomial $F_{Q_1}$ does not contain the monomial $X_0^a Y_0^b$, $F_{Q_2}$ does not contain $X_0^a Y_1^b$, $F_{Q_3}$ does not contain $X_1^a Y_0^b$, and $F_{Q_4}$ does not contain the monomial $X_1^a Y_1^b$. Therefore,
\begin{equation}\label{lem4}
\begin{split}
\sum_{i=1}^4F_{Q_i} =&s_{a-1,b}X_0^{a-1}X_1Y_0^b+s_{a,b-1}X_0^aY_0^{b-1}Y_1+s_{a-1,0}X_0^{a-1}X_1Y_1^b\\
		&+s_{a,1}X_0^aY_0Y_1^{b-1}+s_{1,b}X_0X_1^{a-1}Y_0^b+s_{0,b-1}X_1^aY_0^{b-1}Y_1\\
		&+s_{1,0}X_0X_1^{a-1}Y_1^b+s_{0,1}X_1^aY_0Y_1^{b-1}
\end{split}
\end{equation}
where $s_{a-1,b},s_{a,b-1},s_{a-1,0},s_{a,1},s_{1,b},s_{0,b-1},s_{1,0},s_{0,1}\in\mathbb C$ such that
$(s_{a-1,b},$
$s_{a,b-1})\not=(0,0)$, 
$(s_{a-1,0},s_{a,1})\not=(0,0)$, 
$(s_{1,b},s_{0,b-1})\not=(0,0)$, and $(s_{1,0},s_{0,1})$
$\not=(0,0)$.	
Since $f$ is an automorphism of $C_{a,b}$, we have
$f^{\ast}\ff=t\ff$ for some $t\in\mathbb C^{\ast}$.
Thus, by the equation~\eqref{02}, $f^{\ast}F_Q=tF_Q$, i.e.
\begin{dmath*}
			e_n^{a-1}e_m^bs_{a-1,b}X_0^{a-1}X_1Y_0^b+e_n^ae_m^{b-1 }s_{a,b-1}X_0^aY_0^{b-1}Y_1+e_n^{a-1}s_{a-1,0}X_0^{a-1}X_1Y_1^b+e_n^ae_ms_{a,1}X_0^aY_0Y_1^{b-1}+e_ne_m^bs_{1,b}X_0X_1^{a-1}Y_0^b+e_m^{b-1}s_{0,b-1}X_1^aY_0^{b-1}Y_1+e_ns_{1,0}X_0X_1^{a-1}Y_1^b+e_ms_{0,1}X_1^aY_0Y_1^{b-1}\\
			=t(s_{a-1,b}X_0^{a-1}X_1Y_0^b+s_{a,b-1}X_0^aY_0^{b-1}Y_1+s_{a-1,0}X_0^{a-1}X_1Y_1^b+s_{a,1}X_0^aY_0Y_1^{b-1}+s_{1,b}X_0X_1^{a-1}Y_0^b+s_{0,b-1}X_1^aY_0^{b-1}Y_1+s_{1,0}X_0X_1^{a-1}Y_1^b+s_{0,1}X_1^aY_0Y_1^{b-1}).
\end{dmath*}
By comparing the coefficients of the monomials $X_0^{a-1}X_1Y_0^b$, $X_0^aY_0^{b-1}Y_1$, $X_0^{a-1}X_1Y_1^b$, $X_0^aY_0Y_1^{b-1}$, $X_0X_1^{a-1}Y_0^b$, $X_1^aY_0^{b-1}Y_1$, $X_0X_1^{a-1}Y_1^b$, and $X_1^aY_0Y_1^{b-1}$, and using the assumptions that $(s_{a-1,b},$
$s_{a,b-1})\not=(0,0)$, 
$(s_{a-1,0},s_{a,1})\not=(0,0)$, 
$(s_{1,b},s_{0,b-1})\not=(0,0)$, and $(s_{1,0},s_{0,1})$
$\not=(0,0)$, we obtain the following equations:
\begin{equation}\label{eq4}
\begin{aligned}
(e_n^{a-1}e_m^b-t)(e_n^ae_m^{b-1}-t)&=0,\ \ \ \ \ \ \ \ &(e_n^{a-1}-t)(e_n^ae_m-t)&=0,\\
(e_ne_m^b-t)(e_m^{b-1}-t)&=0,\ \ \ \ \ \ \ \ &(e_n-t)(e_m-t)&=0.
\end{aligned}
\end{equation}	
By the equation in $(\ref{eq4})$, either $e_n=t$ or $e_m=t$.

\textbf{Case 1:} We assume that $e_n=e_m=t$.
The equations in $(\ref{eq4})$ are equal to the following equations:
\[	e_n^{a+b-2}=1,\ \ \ \ \ \ (e_n^{a-2}-1)(e_n^a-1)=0,\ \ \ \ \ \ (e_n^b-1)(e_n^{b-2}-1)=0.\]
By \ta second equation, either $e_n^{a-2}=1$ or $e_n^a=1$.
By \ta first equation, if $e_n^{a-2}=1$ (resp. $e_n^a=1$), then $e_n^b=1$ (resp. $e_n^{b-2}=1$).
Thus, ${\rm ord}(f)$ divides ${\rm gcd}(a-2,b)$ (resp. ${\rm gcd}(a,b-2)$).

We assume that $e_n\not=e_m$ and $e_n=t$.
The equations in $(\ref{eq4})$ are equal to the following equations:
\[(e_n^{a-2}e_m^b-1)(e_n^{a-1}e_m^{b-1}-1)=0,\ (e_n^{a-2}-1)(e_n^{a-1}e_m-1)=0,\ (e_m^b-1)(e_m^{b-1}-e_n)=0.\]	

\textbf{Case 2:} We assume that $e_n\not=e_m$, $e_n=t$, $e_n^{a-2}e_m^b=1$, and $e_n^{a-1}e_m^{b-1}=1$.
Since $e_n^{a-2}e_m^b=e_n^{a-1}e_m^{b-1}$, we get $e_n=e_m$.
		This contradicts that $e_n\not=e_m$.
As a result, only one of $e_n^{a-2}e_m^b=1$ and $e_n^{a-1}e_m^{b-1}=1$ holds.

\textbf{Case 3:} We assume that $e_n\not=e_m$, $e_n=t$, 
$e_n^{a-2}e_m^b=1$, $e_n^{a-1}e_m^{b-1}\not=1$, 
and $e_n^{a-2}=e_n^{a-1}e_m=1$.
Since $e_n^{a-2}=e_n^{a-1}e_m$, we get $e_n=e_m^{-1}$.
By $e_n^{a-2}=e_n^{a-2}e_m^b=1$, we have $e_m^b=1$.
Then ${\rm ord}(f)$ divides ${\gcd}(a-2,b)$.

\textbf{Case 4:} We assume that $e_n\not=e_m$, $e_n=t$, 
$e_n^{a-2}e_m^b=1$, $e_n^{a-1}e_m^{b-1}\not=1$, 
$e_n^{a-2}=1$, and $e_n^{a-1}e_m\not=1$.
If $e_m^b=1$ and $e_m^{b-1}=e_n$, then we have $e_ne_m=1$.
By $e_n^{a-2}=1$, we obtain $e_n^{a-1}e_m=1$.
This contradicts that $e_n^{a-1}e_m\not=1$.
Thus, only one of $e_m^b=1$ and $e_m^{b-1}=e_n$ holds.
Since $e_n^{a-2}e_m^b=1$ and $e_n^{a-2}=1$, we have $e_m^b=1$, and hence $e_m^{b-1}\not=e_n$.
Since $e_n=t$, $e_m\not=t$, $e_n^{a-1}e_m^{b-1}\not=1$, $e_n^{a-1}e_m\not=1$, and $e_m^{b-1}\not=e_n$,
we have $s_{0,1}=s_{a,b-1}=s_{a,1}=s_{0,b-1}=0$.
Thus, $F_Q$ is divisible by $X_1$.
Since $X$ is irreducible, $F$ has 
a term of the form $X_1Y_0^jY_1^{b-j}$ for some $2\leq j\leq b-2$.
Since $t=e_n$, we get $e_n=e_m^j$.
Since $e_m^b=1$, ${\rm ord}(f)$ divides $b$.
	
\textbf{Case 5:} We assume that $e_n\not=e_m$, $e_n=t$, 
$e_n^{a-2}e_m^b=1$, $e_n^{a-1}e_m^{b-1}\not=1$, 
$e_n^{a-2}\not=1$, and $e_n^{a-1}e_m=1$.	
By $e_n^{a-2}e_m^b=e_n^{a-1}e_m=1$, we have $e_m^{b-1}=e_n$.
Since $e_n^{a-2}e_m^b=1$, we obtain $e_m^{(a-1)(b-1)+1}=1$.
Thus, ${\rm ord}(f)$ divides $(a-1)(b-1)+1$.
Additionally, if $e_n=e_m^{-1}$,
 then by $e_n^{a-1}e_m=1$, we have $e_n^{a-2}=1$.
This contradicts that $e_n^{a-2}\not=1$.

\textbf{Case 6:} We assume that $e_n\not=e_m$, $e_n=t$, 
$e_n^{a-2}e_m^b\not=1$, $e_n^{a-1}e_m^{b-1}=1$, 
and $e_n^{a-2}=e_n^{a-1}e_m=1$.
Since $e_n^{a-1}e_m^{b-1}=e_n^{a-1}e_m$, we get $e_m^{b-2}=1$.
Since $e_n^{a-2}=e_n^{a-1}e_m$, we have $e_n=e_m^{-1}$.
By $e_n^{a-2}=e_m^b=1$, ${\rm ord}(f)$ divides ${\gcd}(a-2,b)$.

\textbf{Case 7:} We assume that $e_n\not=e_m$, $e_n=t$, 
$e_n^{a-2}e_m^b\not=1$, $e_n^{a-1}e_m^{b-1}=1$, 
and $e_n^{a-2}=1$ and $e_n^{a-1}e_m\not=1$.
Since $e_n^{a-1}e_m^{b-1}=e_n^{a-2}$, we get $e_n=e_m^{1-b}$.
By the third equation in $(\ref{eq4})$, either $e_m^b=1$ or $e_m^{b-1}=e_n$.

If $e_m^b=1$, then by $e_n^{a-2}=1$ and $e_n=e_m^{1-b}$, we obtain that $e_n=e_m^{-1}$ and ${\rm ord}(f)$ divides ${\rm gcd}(a-2,b)$. 

If $e_m^{b-1}=e_n$, then $e_n=e_m^{1-b}$, we have $e_n=-1$.
Since $e_n^{a-2}=e_n^{a-1}e_m$, we have $e_n=e_m^{-1}$.
By $e_n^{a-2}=e_m^b=1$, ${\rm ord}(f)$ divides ${\gcd}(a-2,b)$.
Additionally, if $e_n=e_m^{-1}$, then $e_m=-1$.
This contradicts that $e_n\not=e_m$. 

\textbf{Case 8:} We assume that $e_n\not=e_m$, $e_n=t$, 
$e_n^{a-2}e_m^b\not=1$, $e_n^{a-1}e_m^{b-1}=1$, 
and $e_n^{a-2}\not=1$ and $e_n^{a-1}e_m=1$.
Since $e_n^{a-1}e_m^{b-1}=e_n^{a-1}e_m$, we get $e_m^{b-2}=1$.
By the third equation in $(\ref{eq4})$, either $e_m^b=1$ or $e_m^{b-1}=e_n$.

If $e_m^b=1$, then by $e_m^{b-2}=1$, we get $e_m=-1$.
Since $e_n^{a-2}=e_n^{a-1}e_m$, we have $e_n=e_m^{-1}$.
By $e_n^{a-1}e_m^{b-1}=e_m^b=1$, we obtain $e_n^{a-1}=-1$.
Then $(e_n^2)^{a-1}=1$.
If $f^2=[D(e_n^2,1)]\times[I_2]\not={\rm id}_{C_{a,b}}$, then ${\rm ord}(f^2)$ divides $a-1$.
By Lemma \ref{2}, this is a contradiction.
Then $e_n^2=1$, i.e. $e_n=-1$.
This contradicts that $e_n\not=e_m$.

If $e_m^{b-1}=e_n$, then by $e_m^{b-2}=1$, we obtain that $e_n=e_m$.
This contradicts that $e_n\not=e_m$.

We assume that $e_n\not=e_m$ and $e_m=t$.
By the symmetry of $a,n$ and $b,m$ in the equations $(\ref{eq4})$,
we get that this proposition.
\end{proof}
\begin{thm}\label{16}
Let $C_{a,b}\subset \pp$ be a smooth curve of bidegree $(a,b)$ with an automorphism $f$.
We assume that $|\{Q_i\}_{i=1}^4 \cap C_{a,b}|=4$.
\begin{enumerate}
\item[$(i)$]If $f=[D(e_n,1)]\times [D(e_m,1)]$ where $n,m\geq 2$,
then ${\rm ord}(f)$ divides either $a$, $b$, or $(a-1)(b-1)+1$.
\item[$(ii)$]If $f=s_{(1,2)}\circ ([A]\times [B])$ where $A$ and $B$ are diagonal matrices such that $AB=BA=D(e_n,1)$ with $n\geq2$,
		then ${\rm ord}(f)=4$.
\item[$(iii)$]If $a\geq 3$, then ${\rm ord}(f)$ does not divide $kl$ where $k\geq2$ and $l:={\rm Max}\{a,b\}$.
	\end{enumerate}	
\end{thm}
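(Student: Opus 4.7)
The plan is to mirror the organization of Theorems \ref{11} and \ref{13}: exhaust the diagonal case $f=[D(e_n,1)]\times[D(e_m,1)]$ via Proposition \ref{16-}, reduce the swap case $f=\s\circ([A]\times[B])$ to the diagonal one by applying Proposition \ref{16-} to $f^{2}$ through Lemma \ref{8}, and close (iii) by a size estimate that kills every surviving subcase.

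For (i), I just read off the five subcases of Proposition \ref{16-}: subcase (i) gives $\mathrm{ord}(f)\mid\gcd(a-2,b)$ or $\gcd(a,b-2)$, which divide $b$ and $a$ respectively; subcases (ii) and (iii) give $\mathrm{ord}(f)\mid a$ and $\mathrm{ord}(f)\mid b$; subcases (iv) and (v) give $\mathrm{ord}(f)\mid(a-1)(b-1)+1$. For (ii), Lemma \ref{8} forces $a=b$ and $f^{2}=[D(e_n,1)]\times[D(e_n,1)]$ with $n\ge 2$. The equal diagonal entries place $f^{2}$ in subcase (i) of Proposition \ref{16-}, so $\mathrm{ord}(f^{2})\mid\gcd(a-2,a)=\gcd(a,2)\mid 2$. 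Since $e_n\neq 1$ rules out $f^{2}=\mathrm{id}$, we get $\mathrm{ord}(f^{2})=2$; then $f$ cannot be an involution, so $\mathrm{ord}(f)=4$.

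For (iii), I argue by contradiction, following the template of Theorem \ref{13}(iii). Reading ``$\mathrm{ord}(f)\mid kl$ with $k\ge 2$'' as in the final paragraphs of Proposition \ref{8.1}, Theorem \ref{11}(iii), and Theorem \ref{13}(iii), I may assume $\mathrm{ord}(f)\ge 2\max\{a,b\}\ge 6$. Parts (i) and (ii) then leave exactly four possibilities. If $\mathrm{ord}(f)\mid a$ or $\mathrm{ord}(f)\mid b$, then $\mathrm{ord}(f)\le\max\{a,b\}$, contradicting $\mathrm{ord}(f)\ge 2\max\{a,b\}$. If $\mathrm{ord}(f)=4$, this contradicts $\mathrm{ord}(f)\ge 6$. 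Finally, if $\mathrm{ord}(f)\mid(a-1)(b-1)+1$ and $\max\{a,b\}\mid\mathrm{ord}(f)$, take $b=\max\{a,b\}$ (the other case is symmetric): then $b\mid(a-1)(b-1)+1\equiv 2-a\pmod b$, so $b\mid a-2$, which is impossible since $1\le a-2<b$ by $a\ge 3$ and $a\le b$. The main obstacle is this last divisibility argument; the rest is straightforward bookkeeping from Proposition \ref{16-}.
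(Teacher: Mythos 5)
Your proposal is correct and follows essentially the same route as the paper: part (i) is read off from Proposition \ref{16-} using that $\gcd(a,b-2)\mid a$ and $\gcd(a-2,b)\mid b$, part (ii) reduces to subcase (i) of Proposition \ref{16-} applied to $f^{2}$ via Lemma \ref{8} (so $\mathrm{ord}(f^{2})\mid\gcd(a,2)$, forcing $\mathrm{ord}(f)=4$), and part (iii) is the same divisibility contradiction, with the paper deriving $a\mid b-2$ for $a=\mathrm{Max}\{a,b\}$ where you symmetrically derive $b\mid a-2$.
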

\begin{proof}
First, we assume that $f=[D(e_n,1)]\times [D(e_m,1)]$ where $n,m\geq 2$.
Since ${\rm gcd}(a-2,b)$ divides $b$ and ${\rm gcd}(a,b-2)$ divides $a$, by Proposition \ref{16-}, ${\rm ord}(f)$ divides either
$a$, $b$, or $(a-1)(b-1)+1$.
	
Next, we assume that $f=s_{(1,2)}\circ ([A]\times [B])$ where $A$ and $B$ are diagonal matrices such that $AB=BA=D(e_n,1)$ with $n\geq2$.
By Lemma \ref{8}, $a=b$.
Since $f^2=[D(e_n,1)]\times[D(e_n,1)]$,
$f^2$ corresponds to part $(i)$ of Proposition \ref{16-}.
Since $a=b$, ${\rm gcd}(a-2,b)={\rm gcd}(a,b-2)={\rm gcd}(a,2)$. 
Then ${\rm ord}(f^2)=2$, and hence ${\rm ord}(f)=4$.

From the above, parts $(i)$ and $(ii)$ of this theorem are shown.

Finally, we assume that $a\geq 3$ and ${\rm ord}(f)$ divides $lm$ where $l\geq2$ and $m:={\rm Max}\{a,b\}$.
By parts $(i)$ and $(ii)$ of this theorem,
 ${\rm ord}(f)$ divides $(a-1)(b-1)+1$.
Without loss of generality, 
we may assume that $a={\rm Max}\{a,b\}$.
Then $ka$ divides $(a-1)(b-1)+1=a(b-1)-(b-2)$.
As a result, $a$ divides $b-2$.
This contradicts that $a={\rm Max}\{a,b\}$.
\end{proof}

\begin{thm}\label{pmain}
Let $C_{a,b}\subset \pp$ be a smooth curve of bidegree $(a,b)$, and let $f$ be an automorphism of $C_{a,b}$ where $a,b\geq 3$.
Then we have the following:
\begin{enumerate}
\item[$(i)$]${\rm ord}(f)$ divides either $6$, $k-2$, $2(k-1)$, $(a-1)(b-1)+1$, $a(b-1)$, $(a-1)b$, 
or $ab$ where $k\in\{a,b\}$.
\item[$(ii)$]If $|{\rm Fix}(f)|>0$ and ${\rm ord}(f)$ divides $kl$ where $k\geq2$ and $l:={\rm Max}\{a,b\}$,
then $C_{a,b}/\langle f\rangle \cong \mathbb P^1$.
\end{enumerate}
\end{thm}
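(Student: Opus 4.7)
The plan is to combine all the case analyses already completed in this section, using that every automorphism of $C_{a,b}$ extends to $\pp$ by $[\ref{bio:tt12},\,{\rm Theorem}\,3]$. By the structure theorem for ${\rm Aut}(\pp)$ proved earlier, $f$ takes one of the two forms $[A]\times[B]$ or $s_{(1,2)}\circ([A]\times[B])$. Applying Lemma \ref{-1} or Lemma \ref{8}, I may assume $A$ and $B$ are diagonal matrices of the form $D(e_n,1)$ and $D(e_m,1)$ in the non-swap form, or that $AB=BA=D(e_n,1)$ are diagonal in the swap form.

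For part (i), I first dispose of the case when one of $A,B$ equals $I_2$: Lemma \ref{2} gives ${\rm ord}(f)\mid a$ or $b$, both of which divide $ab$. Otherwise $n,m\geq 2$, and I split on $N:=|\{Q_i\}_{i=1}^4\cap C_{a,b}|$. By Lemma \ref{3}, $N\neq 1$. The cases $N=0,2,3,4$ in the non-swap form are handled by Theorems \ref{4}(i), \ref{11}(i), \ref{13}(i), \ref{16}(i), and the analogous cases in the swap form by parts (ii) of the same theorems combined with Lemma \ref{8} (which forces $a=b$ and reduces the analysis to $f^2$). The resulting possible orders are $ab,\,2a,\,2b,\,(a-1)b,\,a(b-1),\,a-2,\,b-2,\,a,\,b,\,(a-1)(b-1)+1$ in the non-swap case, and $2a,\,4,\,2(a-1),\,6$ in the swap case. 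A direct verification then shows each divides one of $6,\,k-2,\,2(k-1),\,(a-1)(b-1)+1,\,a(b-1),\,(a-1)b,\,ab$ with $k\in\{a,b\}$: the only non-immediate checks concern $2a$ (which divides $ab$ if $b$ is even and $a(b-1)$ if $b$ is odd, with a symmetric argument for $2b$), and $4$ in the swap case $a=b$ (which divides $ab=a^2$ if $a$ is even and $2(a-1)$ if $a$ is odd).

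For part (ii), I would rule out each incompatible case in turn. The hypothesis ${\rm ord}(f)\mid kl$ with $k\geq 2$ and $l:={\rm Max}\{a,b\}$, as interpreted in Proposition \ref{8.1} and the subsequent results, excludes the Lemma \ref{2} regime since there ${\rm ord}(f)\leq l$. Theorems \ref{13}(iii) and \ref{16}(iii) exclude $N=3$ and $N=4$. If $N=0$, Theorem \ref{4} gives ${\rm Fix}(f)=\emptyset$, contradicting $|{\rm Fix}(f)|>0$. The only remaining possibility is $N=2$ with $n,m\geq 2$, and Theorem \ref{11}(iii) then yields $C_{a,b}/\langle f\rangle\cong\mathbb P^1$.

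The main obstacle I anticipate is the cumulative bookkeeping for part (i): reconciling the parity-dependent divisibilities for $2a$, $2b$, and $4$ against the list of seven targets, and handling the swap case consistently via Lemma \ref{8} so that the orders obtained from $f^2$ (rather than $f$) are correctly tracked. All other steps are direct invocations of the previously established propositions.
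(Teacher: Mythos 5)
Your proposal is correct and follows essentially the same route as the paper's own proof: reduce to diagonal (or swap-composed-with-diagonal) form via Lemmas \ref{-1} and \ref{8}, dispose of the $n=1$ or $m=1$ case by Lemma \ref{2}, exclude $|\{Q_i\}_{i=1}^4\cap C_{a,b}|=1$ by Lemma \ref{3}, and then invoke Theorems \ref{4}, \ref{11}, \ref{13}, and \ref{16} for the remaining four cases, with the same parity-dependent checks that $2a$, $2b$, and $4$ each divide one of the seven listed targets. The only difference is cosmetic: you spell out the divisibility bookkeeping that the paper asserts in one line, and you make explicit the reading of ``${\rm ord}(f)$ divides $kl$'' under which the Lemma \ref{2} regime is excluded, which the paper leaves implicit.
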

\begin{proof}
We set $m:={\rm Max}\{a,b\}$.
By Lemmas \ref{-1} and \ref{8}, 
we may assume that $f=[D(e_n,1)]\times [D(e_m,1)]$ or $f=\s\circ ([A]\times [B])$ where $A,B\in{\rm GL}(\mathbb C,2)$ are diagonal matrices such that $AB=BA=D(e_n,1)$, and $n$ and $m$ are positive integers.

First, we assume that $f=[D(e_n,1)]\times [D(e_m,1)]$ and either $n=1$ or $m=1$. 
By Lemma \ref{2}, ${\rm ord}(f)$ divides $a$ or $b$.
Thus, ${\rm ord}(f)$ divides $ab$.

Next we assume that $f=[D(e_n,1)]\times [D(e_m,1)]$ or $f=\s\circ ([A]\times [B])$ where $A,B\in{\rm GL}(\mathbb C,2)$ are diagonal matrices such that $AB=BA=D(e_n,1)$ and $n,m\geq2$.
By Lemma \ref{3}, $|C_{a,b}\cap \{Q_i\}_{i=1}^4|\not=1$.

We assume that $|C_{a,b}\cap \{Q_i\}_{i=1}^4|=0$.
Since $2k$ divides either $(a-1)b$, $a(b-1)$, or $ab$ where $k\in\{a,b\}$, 
by Theorem \ref{4}, ${\rm ord}(f)$ divides either $(a-1)b$, $a(b-1)$, or $ab$. 
Furthermore, if ${\rm ord}(f)$ divides $lm$ where $l\geq2$, then ${\rm Fix}(f)=\emptyset$.

We assume that $|C_{a,b}\cap \{Q_i\}_{i=1}^4|=2$.
Since $4$ and $2k$ divide either $2(k-1)$, $(a-1)b$, $a(b-1)$, or $ab$ where $k\in\{a,b\}$,
by Theorem \ref{11}, ${\rm ord}(f)$ divides either $2(k-1)$, $(a-1)b$, $a(b-1)$, or $ab$ where $k\in\{a,b\}$.
Furthermore, if ${\rm ord}(f)$ divides $lm$ where $l\geq2$, then ${\rm Fix}(f)=\emptyset$.

We assume that $|C_{a,b}\cap \{Q_i\}_{i=1}^4|=3$.
By Theorem \ref{13}, ${\rm ord}(f)$ divides either $6$ or $k-2$ where $k\in\{a,b\}$, and
${\rm ord}(f)$ does not divide $lm$ where $l\geq2$.

We assume that $|C_{a,b}\cap \{Q_i\}_{i=1}^4|=4$.
Since $4$ and $2k$ divide either $2(k-1)$ or $ab$ where $k\in\{a,b\}$,
by Theorem \ref{13}, ${\rm ord}(f)$ divides either $2(k-1)$, $ab$, or $(a-1)(b-1)+1$ where $k\in\{a,b\}$, and
${\rm ord}(f)$ does not divide $lm$ where $l\geq2$.
\end{proof}
Note that when $a\geq 3$, among $(a-1)^2+1$, $a(a-1)$, and $a^2$, the smallest is $(a-1)^2+1$. 
\begin{lem}\label{lpmain2}
Let $C_{a,b}\subset \pp$ be a smooth curve of bidegree $(a,b)$ with an automorphism $f$ where $a,b\geq 4$.
If $f=s_{(1,2)}\circ ([A]\times [B])$ where $A$ and $B$ are diagonal matrices such that $AB=BA=D(e_n,1)$ with $n\geq2$,
then $a=b$, and 
${\rm ord}(f)$ is none of $(a-1)^2+1$, $a(a-1)$, or $a^2$.
\end{lem}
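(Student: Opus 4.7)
The plan is to first apply Lemma \ref{8} to conclude $a=b$ directly from the form of $f$. After that, the task reduces to showing that none of the three values $(a-1)^2+1$, $a(a-1)$, or $a^2$ can arise as ${\rm ord}(f)$ when $a=b\geq 4$.

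I would split into cases based on $|\{Q_i\}_{i=1}^4 \cap C_{a,b}|$. Lemma \ref{3} rules out the value $1$, so the remaining possibilities are $0$, $2$, $3$, or $4$. In each case one of the earlier theorems supplies an upper bound on ${\rm ord}(f)$: Theorem \ref{4}(ii) gives that ${\rm ord}(f)$ divides $2a$ when the intersection is empty; Theorem \ref{11}(ii) gives that ${\rm ord}(f)$ divides $4$ or $2(a-1)$ when the intersection has size $2$; Theorem \ref{13}(ii) gives ${\rm ord}(f)=6$ when it has size $3$; and Theorem \ref{16}(ii) gives ${\rm ord}(f)=4$ when it has size $4$. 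In all four sub-cases this pins down a small, explicit bound.

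The final step is arithmetic: for $a\geq 4$, each of the candidates $2a$, $2(a-1)$, $6$, $4$ is strictly smaller than $(a-1)^2+1$, and hence strictly smaller than both $a(a-1)$ and $a^2$. The key inequality is $(a-1)^2 + 1 - 2a = a^2 - 4a + 2 \geq 2 > 0$, valid for $a\geq 4$. Consequently none of $(a-1)^2+1$, $a(a-1)$, $a^2$ can divide any of $2a$, $2(a-1)$, $6$, $4$, and so none of them can equal ${\rm ord}(f)$ in any of the four cases. The hypothesis $a\geq 4$ is essential: for $a=3$ the comparison fails (for instance $a(a-1)=6=2a$), which is exactly why the boundary case is excluded. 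Beyond this numerical check the argument is a routine case analysis that directly invokes the already-established results, so I do not anticipate a genuine obstacle.
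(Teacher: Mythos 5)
Your proposal is correct and follows essentially the same route as the paper: reduce to $a=b$ via Lemma \ref{8}, exclude $|\{Q_i\}_{i=1}^4\cap C_{a,b}|=1$ via Lemma \ref{3}, collect the divisibility bounds $2a$, $2(a-1)$, $6$, $4$ from part $(ii)$ of Theorems \ref{4}, \ref{11}, \ref{13}, and \ref{16}, and conclude by the inequality $2a<(a-1)^2+1$ for $a\geq 4$. No substantive difference from the paper's argument.
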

\begin{proof}
By Lemma \ref{8}, $a=b$.
Since $f^2=[D(e_n,1)]\times[D(e_n,1)]$ where $n\geq2$,
by Lemma \ref{3}, $|C_{a,b}\cap \{Q_i\}_{i=1}^4|\not=1$.
By Theorems \ref{4}, \ref{11},\ref{13}, and \ref{16},
${\rm ord}(f)$ divides either $2a$, $4$, $2(a-1)$, or $6$.
Since $a\geq 4$, among $2a$, $4$, $2(a-1)$, and $6$, the largest is $2a$.  
Since $a\geq4$, $2a<(a-1)^2+1$. Thus, ${\rm ord}(f)$ is none of $(a-1)^2+1$, $a(a-1)$, or $a^2$.
\end{proof}
\begin{thm}\label{pmain2}
Let $a,b\geq 4$ be integers.
Let $C_{a,b}\subset \pp$ be a smooth curve of bidegree $(a,b)$ with an automorphism $f$, and $\ff$ be the defining equation of $C_{a,b}$.
\begin{enumerate}
\item[$(i)$]If ${\rm ord}(f)=ab$, then $C_{a,b}/\langle f\rangle \cong \mathbb P^1$ and 
by replacing the coordinate system if necessary,
\[f=[D(e_a,1)]\times[D(e_b,1)]\] and
 \[\ff=X_0^aY_0^b+X_0^aY_1^b+X_1^aY_0^b+sX_1^aY_1^b\] for some $s\in\mathbb C^*$.
\item[$(ii)$]If ${\rm ord}(f)=(a-1)b$, then $C_{a,b}/\langle f\rangle \cong \mathbb P^1$ and
by replacing the coordinate system if necessary,
\[f=[D(e_{(a-1)b}^{-b},1)]\times[D(e_{(a-1)b},1)]\] and
\[\ff=X_0^aY_0^b+X_0^{a-1}X_1Y_1^b+X_0X_1^{a-1}Y_0^b+sX_1^aY_1^b\] for some $s\in\mathbb C^*$.
\item[$(iii)$]If ${\rm ord}(f)=a(b-1)$, then $C_{a,b}/\langle f\rangle \cong \mathbb P^1$ and
by replacing the coordinate system if necessary, 
\[f=[D(e_{a(b-1)},1)]\times[D(e_{a(b-1)}^{-a},1)]\] and \[\ff=X_0^aY_0^b+X_1^aY_0^{b-1}Y_1+X_0^aY_0Y_1^{b-1}+sX_1^aY_1^b\] for some $s\in\mathbb C^*$.
\item[$(iv)$]If ${\rm ord}(f)=(a-1)(b-1)+1$, then $C_{a,b}/\langle f\rangle \cong \mathbb P^1$ and
by replacing the coordinate system if necessary, 
either 
\[f=[D(e_{(a-1)(b-1)+1}^{b-1},1)]\times[D(e_{(a-1)(b-1)+1},1)]\] and 
\begin{equation*}
\begin{split}
\ff=&X_0^{a-1}X_1Y_0^b+X_0^aY_0Y_1^{b-1}\\
&+sX_1^aY_0^{b-1}Y_1+s'X_0X_1^{a-1}Y_1^b
\end{split}
\end{equation*}
 or 
\[f=[D(e_{(a-1)(b-1)+1},1)]\times[D(e_{(a-1)(b-1)+1}^{a-1},1)]\] and
\begin{equation*}
\begin{split}
\ff=&X_0^aY_0^{b-1}Y_1+X_0X_1^{a-1}Y_0^b\\
&+sX_0^{a-1}X_1Y_1^b+s'X_1^aY_0Y_1^{b-1}
\end{split}
\end{equation*}
 for some $(s,s')\in(\mathbb C^*)^2$.
\end{enumerate}
Here, $s,t,u,v\in\mathbb C^{\ast}$.
\end{thm}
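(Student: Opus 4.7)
The plan is to combine the case analysis of Section~3 with an explicit coordinate normalization. First, since $a,b\geq 4$, each of the four target orders $ab$, $(a-1)b$, $a(b-1)$, $(a-1)(b-1)+1$ strictly exceeds $6$, $\max\{a-2,b-2\}$, $2(a-1)$, $4$, and $2a$, $2b$ (when the latter are relevant). Lemma~\ref{lpmain2} therefore rules out the $s_{(1,2)}$-type automorphism, and Lemmas~\ref{-1} and~\ref{8} allow us to assume, after a coordinate change, that $f=[D(e_n,1)]\times[D(e_m,1)]$ with $n,m\geq 2$. Lemma~\ref{3} then excludes $|\{Q_i\}_{i=1}^4\cap C_{a,b}|=1$.

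Next, for each target order I would identify the unique admissible value of $|\{Q_i\}_{i=1}^4\cap C_{a,b}|\in\{0,2,3,4\}$ by numerical comparison with the divisibility bounds of Theorems~\ref{4}, \ref{11}, \ref{13}, and~\ref{16}. Order $ab$ is compatible only with intersection $0$, since the other intersections admit orders dividing some of $2a$, $2b$, $(a-1)b$, $a(b-1)$, $6$, $a-2$, $b-2$, $a$, $b$, $(a-1)(b-1)+1$, none of which equals $ab$ when $a,b\geq 4$; in this case Proposition~\ref{4-} forces $n=a$, $m=b$, $\gcd(a,b)=1$, and restricts $F$ to the four corner monomials. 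Similarly, orders $(a-1)b$ and $a(b-1)$ force intersection $2$ and land in cases (ii) and (iii) of Proposition~\ref{10}, respectively, while order $(a-1)(b-1)+1$ forces intersection $4$ and lands in case (iv) or (v) of Proposition~\ref{16-}. In each case the additional arithmetic constraints pin down the eigenvalues of $f$ to those displayed in the statement.

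With the defining polynomial written as a four-term bihomogeneous form of the prescribed shape, I would apply the diagonal scalings $(X_0,X_1)\mapsto(X_0,\lambda X_1)$ and $(Y_0,Y_1)\mapsto(Y_0,\eta Y_1)$ together with an overall equation rescaling to normalize the coefficients. In cases (i)--(iii) this sets three of the four coefficients to $1$ and leaves the single parameter $s\in\mathbb{C}^{\ast}$; in case (iv) the analogous computation retains the pair $(s,s')\in(\mathbb{C}^{\ast})^2$. The quotient $C_{a,b}/\langle f\rangle\cong\mathbb{P}^1$ then follows from Theorem~\ref{pmain}(ii) in cases (ii)--(iv), because the intersection points $\{Q_i\}_{i=1}^4\cap C_{a,b}$ lie in $\mathrm{Fix}(f)$ and the target orders divide $2\,\mathrm{Max}\{a,b\}$ or $(a-1)(b-1)+1$; for case (i) one instead applies Lemma~\ref{th} to $f^b=[D(e_a^b,1)]\times[I_2]$, which has order $a$ since $\gcd(a,b)=1$.

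The main technical obstacle is the bookkeeping in case (iv): Proposition~\ref{16-} offers two alternative four-monomial supports, corresponding to the dichotomy $e_m^{b-1}=e_n$ versus $e_n^{a-1}=e_m$. Treating each branch separately, one must simultaneously track the normalization of the scaling parameters and the explicit diagonal form of $f$ so that its eigenvalues match the canonical primitive roots $e_{(a-1)(b-1)+1}^{b-1}$ and $e_{(a-1)(b-1)+1}$ (or the swapped version) displayed in the statement, while ensuring that no further monomial can appear without violating either $f$-invariance (Proposition~\ref{16-}) or smoothness (Lemma~\ref{1}). The analogous bookkeeping in cases (i)--(iii) is lighter and follows the same overall pattern.
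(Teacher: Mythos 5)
Your overall architecture matches the paper's proof: reduce to $f=[D(e_n,1)]\times[D(e_m,1)]$ via Lemmas \ref{-1}, \ref{8} and \ref{lpmain2}, pin down $|\{Q_i\}_{i=1}^4\cap C_{a,b}|$ by comparing the target order with the bounds of Theorems \ref{4}, \ref{11}, \ref{13}, \ref{16}, then use Propositions \ref{4-}, \ref{10}, \ref{16-} to restrict the support of $\ff$ and normalize coefficients by diagonal scalings. That part is sound and is essentially what the paper does.

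The genuine gap is your justification of $C_{a,b}/\langle f\rangle\cong\mathbb P^1$ in case $(iv)$. Theorem \ref{pmain}$(ii)$ cannot be applied when ${\rm ord}(f)=(a-1)(b-1)+1$: its hypothesis (that ${\rm ord}(f)$ is a multiple of $k\,{\rm Max}\{a,b\}$ with $k\geq 2$, which is how the condition must be read for the theorem to have content) is never satisfied when $|\{Q_i\}_{i=1}^4\cap C_{a,b}|=4$ --- this is precisely the content of Theorem \ref{16}$(iii)$. Nor does Lemma \ref{th} help here: both eigenvalue parameters of $f$ are primitive $((a-1)(b-1)+1)$-th roots of unity, so no nontrivial power of $f$ restricts to the identity on one factor and neither projection becomes Galois with group inside $\langle f\rangle$. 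The paper closes this case by a separate Riemann--Hurwitz computation: all four points $Q_1,\dots,Q_4$ lie on $C_{a,b}$ and are fixed by $f$ with full stabilizer $G=\langle f\rangle$ of order $ab-a-b+2$, so $\sum_{x\in C_{a,b}}(|G_x|-1)\geq 4(ab-a-b+2)>2(ab-a-b)=2g-2$, which forces the quotient genus to be $0$. You need this (or an equivalent) argument. A smaller point: in cases $(ii)$ and $(iii)$ your appeal to Theorem \ref{pmain}$(ii)$ only goes through when ${\rm Max}\{a,b\}$ actually divides the order (e.g.\ $(a-1)b$ is not a multiple of $a$ when $a>b$); the paper instead applies Lemma \ref{th} directly to $f^{a-1}=[I_2]\times[D(e_{(a-1)b}^{a-1},1)]$ (resp.\ $f^{b-1}$), which works unconditionally and is the same device you already use in case $(i)$.
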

\begin{proof}
By Lemmas \ref{-1}, \ref{8}, \ref{2}, and \ref{pmain2},
we may assume that $f=[D(e_n,1)]\times [D(e_m,1)]$ where $n,m\geq2$.

We assume that ${\rm ord}(f)=ab$.
Since $a,b\geq 4$, and by Lemma \ref{3}, and Theorems \ref{4}, \ref{11}, \ref{13}, and \ref{16},
we get that $|C_{a,b}\cap \{Q_i\}_{i=1}^4|=0$.
By Proposition \ref{4-}, and $a$ and $b$ are coprime, we may assume that $f=[D(e_a,1)]\times [D(e_b,1)]$ and
\begin{equation*}
\begin{split}
\ff=&s_{a,b}X_0^aY_0^b+s_{a,0}X_0^aY_1^b+s_{0,b}X_1^aY_0^b+s_{0,0}X_1^aY_1^b\\
&+\sum_{(i,j)\in\mI}s_{i,j}X_0^iX_1^{a-i}Y_0^jY_1^{b-j}
\end{split}
\end{equation*}
where $s_{a,b},s_{a,0},s_{0,b},s_{0,0}\in\mathbb C^{\ast}$ and $s_{i,j}\in \mathbb C$ for $(i,j)\in\mI$.
Since $f$ is an automorphism of $C_{a,b}$, we have
$f^{\ast}\ff=t\ff$ for some $t\in\mathbb C^{\ast}$.
Since $f=[D(e_a,1)]\times [D(e_b,1)]$, and $\ff$ has a term of the form $X_1^aY_1^b$, we get that $t=1$.
If $s_{i,j}\not=0$ for $(i,j)\in\mI$,
then we have $e_a^ie_b^j=1$, i.e. $e_a^i=e_b^{-j}$.
Since $a$ and $b$ are coprime, $0\leq i\leq a$, and $0\leq j\leq b$,
we get that
$(i,j)$ is either $(0,0)$, $(a,0)$, $(0,b)$, or $(a,b)$.
Then $(i,j)\in\mathbb E$.
This contradicts that $(i,j)\in\mI$.
As a result, $s_{i,j}=0$ for $(i,j)\in\mI$.
Then $\ff=s_{a,b}X_0^aY_0^b+s_{a,0}X_0^aY_1^b+s_{0,b}X_1^aY_0^b+s_{0,0}X_1^aY_1^b$.
Since multiplying $\ff$ by $s_{a,b}^{-1} \in \mathbb{C}^*$ does not affect the defining equation of $C_{a,b}$, we may assume $s_{a,b} = 1$. Moreover, by applying the coordinate changes $Y_1 \mapsto \left(\sqrt[b]{s_{a,0}^{-1}}\right)Y_1$ and $X_1 \mapsto \left(\sqrt[a]{s_{0,b}^{-1}}\right)X_1$, the automorphism $f = [D(e_a,1)] \times [D(e_b,1)]$ remains unchanged, and the defining equation of $C_{a,b}$ becomes $X_0^aY_0^b + X_0^aY_1^b + X_1^aY_0^b + s_{0,0}X_1^aY_1^b$
where $s_{0,0}\in\mathbb C^*$.
In addition, since $a$ and $b$ are coprime, $e_a^b$ is a primitive $a$-th root of unity.
By $f^b=[D(e_a^b,1)]\times [I_2]$ and Lemma \ref{th}, we get that $C_{a,b}/\langle f \rangle\cong \mathbb P^1$.

We assume that ${\rm ord}(f)=(a-1)b$.
Since $a,b\geq 4$, and by Lemma \ref{3}, and Theorems \ref{4}, \ref{11},\ref{13}, and \ref{16},
we get that $|C_{a,b}\cap \{Q_i\}_{i=1}^4|=2$.
By Corollary \ref{-2} and Proposition \ref{10},
we get that
$C_{a,b}\cap \{Q_i\}_{i=1}^4=\{Q_2,Q_3\}$, $f=[D(e_{(a-1)b}^{-b},1)]\times [D(e_{(a-1)b},1)]$, and
\begin{equation*}
\begin{split}
\ff=&s_{a,b}X_0^aY_0^b+s_{a-1,0}X_0^{a-1}X_1Y_1^b+s_{1,b}X_0X_1^{a-1}Y_0^b+s_{0,0}X_1^aY_1^b\\
&+\sum_{(i,j)\in\mI}s_{i,j}X_0^iX_1^{a-i}Y_0^jY_1^{b-j}
	\end{split}
\end{equation*}
where $s_{a,b},s_{a-1,0},s_{1,b},s_{0,0}\in\mathbb C^{\ast}$ and $s_{i,j}\in \mathbb C$ for $(i,j)\in\mI$.
Since $f$ is an automorphism of $C_{a,b}$, we get
$f^{\ast}\ff=t\ff$ for some $t\in\mathbb C^{\ast}$.
Since $f=[D(e_{(a-1)b}^{-b},1)]\times [D(e_{(a-1)b},1)]$, and $\ff$ has a term of the form $X_1^aY_1^b$, we obtain $t=1$.
If $s_{i,j}\not=0$ for $(i,j)\in\mI$,
then we have $e_{(a-1)b}^{-ib}e_{(a-1)b}^j=1$, i.e. $e_{(a-1)b}^{ib}=e_{(a-1)b}^j$.
Since $0\leq i\leq a$ and $0\leq j\leq b$,
we get that $(i,j)$ is either $(0,0)$, $(a-1,0)$, $(0,b)$, or $(a,b)$.
Then $(i,j)\in\mathbb E$.
This contradicts that $(i,j)\in\mI$.
As a result, $s_{i,j}=0$ for $(i,j)\in\mI$.
Then $\ff=s_{a,b}X_0^aY_0^b+s_{a-1,0}X_0^{a-1}X_1Y_1^b+s_{1,b}X_0X_1^{a-1}Y_0^b+s_{0,0}X_1^aY_1^b$.
Since multiplying $\ff$ by $s_{a,b}^{-1} \in \mathbb{C}^*$ does not affect the defining equation of $C_{a,b}$, we may assume $s_{a,b} = 1$. Moreover, by applying the coordinate changes $Y_1 \mapsto \left(\sqrt[b]{s_{a-1,0}^{-1}}\right)Y_1$ and $X_1 \mapsto \left(\sqrt[a-1]{s_{1,b}^{-1}}\right)X_1$, the automorphism $f = [D(e_{(a-1)b}^{-b},1)]\times [D(e_{(a-1)b},1)]$ remains unchanged, and the defining equation of $C_{a,b}$ becomes $X_0^aY_0^b+X_0^{a-1}X_1Y_1^b+X_0X_1^{a-1}Y_0^b+s_{0,0}X_1^aY_1^b$
where $s_{0,0}\in\mathbb C^*$.
In addition, since $f^{a-1}=[I_2]\times [e_{(a-1)b}^{a-1}]$,
$e_{(a-1)b}^{a-1}$ is a primitive $b$-th root of unity, and Lemma \ref{th}, we get that $C_{a,b}/\langle f \rangle\cong \mathbb P^1$.
The case where ${\rm ord}(f)=a(b-1)$ is shown in a similar manner.

We assume that ${\rm ord}(f)=(a-1)(b-1)+1$.
Since $a,b\geq 4$, and by Lemma \ref{3}, and Theorems \ref{4}, \ref{11},\ref{13}, and \ref{16},
we get that $|C_{a,b}\cap \{Q_i\}_{i=1}^4|=4$.
By Proposition \ref{16-},
$f$ is $[D(e_{(a-1)(b-1)+1}^{b-1},1)]\times [D(e_{(a-1)(b-1)+1},1)]$ or $[D(e_{(a-1)(b-1)+1},1)]\times [D(e_{(a-1)(b-1)+1}^{a-1},1)]$.
We assume that $f=[D(e_{(a-1)(b-1)+1}^{b-1},1)]\times [D(e_{(a-1)(b-1)+1},1)]$.
By part $(iv)$ of Proposition \ref{16-},
\begin{equation*}
	\begin{split}
		\ff=&s_{a-1,b}X_0^{a-1}X_1Y_0^b+s_{a,1}X_0^aY_0Y_1^{b-1}+s_{0,b-1}X_1^aY_0^{b-1}Y_1\\
		&+
		s_{1,0}X_0X_1^{a-1}Y_1^b+\sum_{(i,j)\in\mI}s_{i,j}X_0^iX_1^{a-i}Y_0^jY_1^{b-j}
	\end{split}
\end{equation*}
where $s_{a-1,b},s_{a,0},s_{0,b-1},s_{1,0}\in\mathbb C^{\ast}$ and $s_{i,j}\in \mathbb C$ for $(i,j)\in\mI$.
Since $f$ is an automorphism of $C_{a,b}$, we have
$f^{\ast}\ff=t\ff$ for $t\in\mathbb C^{\ast}$.
Since $f=[D(e_{(a-1)(b-1)+1}^{b-1},1)]\times [D(e_{(a-1)(b-1)+1},1)]$, and $\ff$ has a term of the form $X_0X_1^{a-1}Y_1^b$, we get that $t=e_{(a-1)(b-1)+1}^{b-1}$.
If $s_{i,j}\not=0$ for $(i,j)\in\mI$,
then $e_{(a-1)b}^{i(b-1)}e_{(a-1)b}^j=e_{(a-1)(b-1)+1}^{b-1}$, i.e. $e_{(a-1)(b-1)+1}^{(i-1)(b-1)+j}=1$.
Since $0\leq i\leq a$ and $0\leq j\leq b$,
we get that $(i,j)$ is either $(0,b-1)$, $(1,0)$, $(a-1,b)$, or $(a,1)$.
Then $(i,j)\in\mathbb E$.
This contradicts that $(i,j)\in\mI$.
As a result, $s_{i,j}=0$ for $(i,j)\in\mI$.
Then $\ff=s_{a-1,b}X_0^{a-1}X_1Y_0^b+s_{a,1}X_0^aY_0Y_1^{b-1}+s_{0,b-1}X_1^aY_0^{b-1}Y_1+s_{1,0}X_0X_1^{a-1}Y_1^b$.
Since multiplying $\ff$ by $s_{a-1,b}^{-1} \in \mathbb{C}^*$ does not affect the defining equation of $C_{a,b}$, we may assume $s_{a-1,b} = 1$. Moreover, by applying the coordinate changes $Y_1 \mapsto \left(\sqrt[b-1]{s_{a,1}^{-1}}\right)Y_1$, the automorphism $f =[D(e_{(a-1)(b-1)+1}^{b-1},1)]\times [D(e_{(a-1)(b-1)+1},1)]$ remains unchanged, and the defining equation of $C_{a,b}$ becomes $X_0^{a-1}X_1Y_0^b+X_0^aY_0Y_1^{b-1}+s_{0,b-1}X_1^aY_0^{b-1}Y_1+s_{1,0}X_0X_1^{a-1}Y_1^b$
where $s_{0,b-1},s_{1,0}\in\mathbb C^*$.

Let $p\co C_{a,b}\ra C_{a,b}/\langle f\rangle $ be the quotient morphism.
We set $G:=\langle f\rangle $
For $x\in G$, let $G_x:=\{g\in G\,|\,g(x)=x\}$.
Then the ramification index at $x$ is equal to $|G_x|$.
Let $g$ and $g'$ denote the genus of $C_{a,b}$ and $C_{a,b}/G$, respectively.
By the Riemann-Hurwitz formula,
\[2-2g+\sum_{x\in C_{a,b}}(|G_x|-1)=|G|(2-g').\]
Since the bidegree of $C_{a,b}$ is $(a,b)$, $g=(a-1)(b-1)$.
Since the genus is a non-negative integer, $C_{a,b}/G$ being $\mathbb P^1$ is equivalent to $2-2g'>0$.
Therefore, to show that 
$C_{a,b}/G\cong \mathbb P^1$, it suffices to show that 
\[\sum_{x\in C_{a,b}}(|G_x|-1)>2(ab-a-b).\]
Since $Q_i\in {\rm Fix}(f)$, $G_{Q_i}=G$ for $i=1,\ldots,4$.
Since $|G|=(a-1)(b-1)+1=ab-a-b+2$
\[\sum_{x\in C_{a,b}}(|G_x|-1)\geq 4(ab-a-b+2)>2(ab-a-b).\]
As a result, $C_{a,b}/G\cong \mathbb P^1$.

The case where $f=[D(e_{(a-1)(b-1)+1},1)]\times [D(e_{(a-1)(b-1)+1}^{a-1},1)]$ is shown in a similar manner.
\end{proof}

	%
%
	
\end{document}